\numberwithin{equation}{section}
\newtheorem{proposition}{Proposition}[section]
\newtheorem{corollary}[proposition]{Corollary}
\newtheorem{theorem}[proposition]{Theorem}
\newtheorem{lemma}[proposition]{Lemma}
\theoremstyle{definition}
\theoremstyle{remark}
\newcommand{\open}{\mathcal{R}}
\newcommand{\mean}{\mu}
\newcommand{\off}{\xi}
\newcommand{\offhat}{\hat{\off}}
\newcommand{\sgt}{\mathcal{T}}
\newcommand{\sgloop}{\mathcal{L}}
\renewcommand{\root}{\varnothing}
\newcommand{\Loop}{\mathrm{Loop}}
\newcommand{\Tset}{\mathfrak{T}}
\newcommand{\Lset}{\mathfrak{L}}
\newcommand{\Tree}{\tau}
\renewcommand\P{\mathbb{P}}
\newcommand\E{\mathbb{E}}
\renewcommand{\l}{\lambda}
\newcommand{\specdim}{d_{\mathrm{s}}}
\newcommand{\indic}[1]{\mathbf{1}_{\{#1\}}}
\newcommand\RR{\mathbb{R}}
\newcommand\Reff{\mathrm{R}_{\mathrm{eff}}}
\newcommand{\bea}{\begin{eqnarray}}
\newcommand{\eea}{\end{eqnarray}}
\def\void{}
\def\labelmark{}
\newenvironment{formula}[1]{\def\labelname{#1}
\ifx\void\labelname\def\junk{\begin{displaymath}}
\else\def\junk{\begin{equation}\label{\labelname}}\fi\junk}%
{\ifx\void\labelname\def\junk{\end{displaymath}}
\else\def\junk{\end{equation}}\fi\junk\labelmark\def\labelname{}}
\def\junk{\end{array}\end{displaymath}}
\def\junk{\end{array}\right.\end{equation}}
\def\labelname{}\def\junk{}
\newcommand{\beq}{\begin{formula}}
\newcommand{\eeq}{\end{formula}}
\newcommand{\beqv}{\begin{formula}{}}
\newenvironment{romenumerate}[1][0pt]{
\addtolength{\leftmargini}{#1}\begin{enumerate}
 }{\end{enumerate}}
\newcounter{oldenumi}
{\setcounter{oldenumi}{\value{enumi}}
\begin{romenumerate} \setcounter{enumi}{\value{oldenumi}}}
{\end{romenumerate}}
\xdef\klockan{\the\count1.0\the\count255}
\xdef\klockan{\the\count1.\the\count255}\fi
\newcommand\marginal[1]{\marginpar{\raggedright\parindent=0pt\tiny #1}}
\newcommand\REM[1]{{\raggedright\texttt{[#1]}\par\marginal{XXX}}}
\newcommand\eps{\varepsilon}
\def\rompar(#1){\textup(#1\textup)}    
\def\xexp(#1){e^{#1}}
\newcommand\ceil[1]{\lceil#1\rceil}
\newcommand\floor[1]{\lfloor#1\rfloor}
\newcommand{\tend}{\longrightarrow}
\newcommand\dto{\overset{\mathrm{d}}{\tend}}
\newcommand\ZZ{\mathbb Z}
\newcommand{\cC}{\mathcal{C}}
\newcounter{CC}
\newcounter{cc}
\newcommand{\out}{\text{out}}
\newcommand{\tree}{\tau}
\newcommand{\height}{\mathrm{Height}}
\begin{document}
\title
{Random walk on random infinite looptrees}

\date{\today}   
\subjclass[2000]{05C80, 05C81, 05C05, 60J80, 60K37}
\keywords{Looptrees, random trees, random walk,  spectral dimension.}

\author{Jakob E. Bj\"ornberg}

\address{Department of
Mathematical Sciences, Chalmers and
Gothenburg University, 412 96 G\"oteborg, Sweden.
Email: jakob.bjornberg$@$gmail.com}

\author{Sigurdur \"Orn Stef\'ansson} 
\address{
Division of Mathematics, The
Science Institute, University of Iceland,
Dunhaga 3 IS-107 Reykjavik, Iceland.
Email: rudrugis$@$gmail.com}

\thanks{Research supported by the Knut and Alice Wallenberg Foundation.}

\maketitle

\begin{abstract} 
Looptrees have recently arisen in the study of critical percolation on
the uniform infinite planar triangulation.  Here we consider random infinite
looptrees defined as the local limit of the looptree associated with a
critical Galton--Watson tree conditioned to be large.  We study simple
random walk on these infinite looptrees by means of providing
estimates on volume and resistance growth.  We prove that if the
offspring distribution of the Galton--Watson process is in the domain
of attraction of a stable distribution with index $\alpha\in(1,2]$
then the spectral dimension of the looptree is $2\alpha/(\alpha+1)$.
\end{abstract}

\section{Introduction}

 Random graphs have been used as a discretization of continuous
 manifolds in a `sum over histories' approach to quantum gravity
 \cite{durhuus:book}. In physics these graphs go by the name of
 \emph{dynamical triangulations} since the basic building blocks of
 the graphs are triangles (if the manifold is two-dimensional) and
 their higher dimensional analogs.  It is not important for large
 scale properties how one chooses these building blocks and in two
 dimensions one may e.g.~replace triangles by polygons of higher
 degree and the general graph is in this case called a \emph{planar
   map}.

An important observable one would like to have in such theories is
some notion of dimension of the random graphs. Although planar maps
are locally two-dimensional in the sense that their building blocks
are polygons they may on large scales be far from two-dimensional. As
an example, consider the planar map with vertex set
$\{0,1,\ldots,n\}\times\{0,1\}$ and edges between $(i,j)$ and
$(i',j')$ iff $|i-i'| +|j-j'| = 1$. The graph consists of $n$ squares
connected together in a linear chain and when $n$ is large it is in
some ways more one-dimensional than two-dimensional. There are
different ways to define a dimension of a random graph and one way is
through the behaviour of simple random walk on the graph. The
\emph{spectral dimension} $\specdim(G)$ may be defined for any
connected, locally finite graph $G$ as the limit
\begin{equation}
\specdim(G)=-2\lim_{n\to\infty}\frac{\log p_{2n}^G(x,x)}{\log n}
\end{equation}
provided this exists
 ($\specdim(G)$ is then easily seen to be independent of $x$).  
Here $p_{2n}^G(x,x)$ is the probability that
simple random walk on $G$, started at $x$, returns to $x$ after $2n$
steps. The
spectral dimension of the $d$-dimensional lattice $\mathbb{Z}^d$
equals $d$ which motivates its definition.

The last decades have seen substantial progress on the understanding of
random walk on random graphs.  
Much of this work has been motivated by the Alexander--Orbach
conjecture~\cite{alexander:1982}, which concerns
random walk on critical percolation clusters in $\ZZ^d$, which
are supposed to be `tree-like'.  In a seminal 1986
paper~\cite{kesten:1986}, Kesten initiated a rigorous study of 
random walk on critical percolation clusters and 
large critical random trees.
More recently Barlow and
Kumagai~\cite{barlow:2006} studied simple random 
walk on the incipient infinite
cluster of percolation on the $d$-regular tree, which we denote here by $\sgt$.  
Among other things, Barlow and Kumagai proved that $\specdim(\mathcal{T})=4/3$ for almost every realization of
$\mathcal{T}$.  

The incipient infinite cluster $\mathcal{T}$
of the $d$-regular tree may be constructed as the large $N$ local
limit of a Galton--Watson tree with critical offspring 
distribution Bin($d$,$1/d$), conditioned to have size $N$.
Recall that a Galton--Watson tree 
with offspring distribution $\pi =(\pi_i)_{i\geq0}$
is a random tree
constructed recursively by starting with a single individual who gives
birth to a number of offspring according to the distribution $\pi$ and
then each of the offspring has independently offspring of their own
according to the same distribution and so on, see
e.g.~\cite{athreya-ney}.  The offspring distribution and the process are said to be \emph{critical} if the
expected number of offspring is 1.  We 
let $\sgt_N$ denote a Galton--Watson tree conditioned on having $N$
vertices.

Fuji and Kumagai~\cite{fuji:2008} generalized the result
of~\cite{barlow:2006} by showing that the same spectral dimension
$4/3$ is obtained for \emph{any} tree $\mathcal{T}$ arising as the
local limit of $\sgt_N$ when the offspring distribution $\pi$ is critical
and has finite variance.  Croydon and Kumagai~\cite{croydon:2008}
studied the case when the offspring distribution may have infinite
variance: more precisely they considered the case when $\pi$ is in the
domain of attraction of a stable distribution of index
$\alpha\in(1,2]$ which is equivalent to
\begin {equation} \label{stableintro}
 \sum_{i=0}^n i^2 \pi_i = n^{2-\alpha} L_1(n)
\end {equation}
where $L_1$ is slowly varying at infinity (see Section \ref{spec_sec}). 
They showed that then the spectral
dimension equals 
\begin {equation} \label{ck_ds}
d_\mathrm{s}(\sgt) = \frac{2\alpha}{2\alpha-1},
\quad\mbox{almost surely.}
\end {equation}
This includes the result
by Fuji and Kumagai for $\alpha=2$. 

The methods of the above-mentioned papers relied on connections
between the asymptotics of the random walk with volume and resistance
growth in the graph.  A general formulation of these methods, which
applies not only to trees but in principle to any random, strongly
recurrent graphs, was given by Kumagai and Misumi
in~\cite{kumagai:2008}. The general scheme of~\cite{kumagai:2008} is to show that
for random infinite graphs in which the volume of graph balls of radius $r$ grows roughly like $r^a$ and in which the resistance from the center to the boundary of the balls grows roughly like $r^b$,
the spectral dimension is 
\begin {equation} \label{dskm}
d_\mathrm{s} = 2a/(a+b), 
\end {equation}
 cf.~\cite[Eq.~(1.1)]{kumagai:2008}. The results of the current paper
rely on the general methods of~\cite{kumagai:2008}.
Other applications of these methods include determining the spectral
dimension of the local limit of random bipartite planar maps in a
certain `condensation phase', see~\cite{bjornberg:2013}. In parallel
to the above-mentioned progress, mathematical physicists have developed
generating function methods to calculate the spectral dimension of
random trees, see
e.g.~\cite{durhuus:2006,durhuus:2007,jonsson:2008,jonsson:2011,stefansson:2012}. The
benefits of those methods are their simplicity but the disadvantages
are that they do not apply as generally and give somewhat weaker
results regarding the existence of $d_\mathrm{s}$.

Recently a new tree-like random structure called a \emph{looptree} was
introduced by Curien and Kortchemski \cite{curien:2013a}.  Given a
finite rooted tree $\tree$ embedded in the plane, one may informally
define the corresponding rooted looptree $\Loop(\tree)$ as follows
(more detail is given
in Section~\ref{trees_sec}).  The
vertex set of $\Loop(\tree)$ is the same as that of $\tree$, but
instead of connecting each vertex to all its children, edges connect
\emph{siblings} in the order they appear in the embedding, and also
there are edges from the leftmost and rightmost children to the
parent.  This is illustrated in Fig.~\ref{f:introloop}.
(What we call $\Loop(\tree)$  is
called $\Loop(\tree)$ in~\cite{curien:2013b} and 
$\Loop'(\tree)$ in~\cite{curien:2013a}.)

\begin{figure} [ht]
\centerline{\scalebox{0.9}{\includegraphics{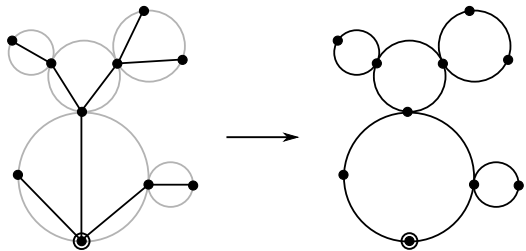}}}
\caption{On the left is a tree $\tree$ in black and the edges in its
  associated looptree in gray.  On the right is
  $\Loop(\tree)$.   The genealogical order in $\tree$ is
  defined with respect to the root which is
  circled.} \label{f:introloop}
\end{figure}

The recent paper~\cite{curien:2013a} studied the \emph{scaling limit}
of $\Loop(\sgt_N)$, where $\sgt_N$ is a critical Galton--Watson tree
conditioned to have size $N$ and whose offspring distribution
$\pi=(\pi_i)_{i\geq 0}$ satisfies \eqref{stableintro}. By scaling
limit we mean convergence in the Gromov--Hausdorff sense of a sequence
of graphs with a properly renormalized graph metric. For each
$\alpha$, the scaling limit was described directly in terms of an
$\alpha$--stable L\'evy process and was referred to as a \emph{stable
  looptree with index $\alpha$}. Furthermore it was proved that the
stable looptrees arise as the scaling limits of certain random
dissections of a polygon.  In~\cite{curien:2013b} it was shown that
the scaling limit of the boundary of a critical percolation cluster in
the uniform infinite planar triangulation equals the stable looptree
with index $\alpha =3/2$ and it was conjectured that the family of
stable looptrees appears generically in the study of interfaces in
statistical mechanical models on planar maps. Therefore, it is of
interest to understand the properties of stable looptrees in detail.

\subsection{Main results}

In this paper we study the \emph{local} limit $\sgloop$ of
$\Loop(\sgt_N)$ as $N\to\infty$ where $\sgt_N$ is a random tree which
belongs to the family of so-called simply generated random trees.  In
most cases of interest one may view $\sgt_N$ as a critical or
sub-critical Galton--Watson tree conditioned on having $N$ vertices as
we will explain in Section \ref{local_sec}. Local convergence of a
sequence of rooted graphs means that for any finite $R$, the graph
ball of radius $R$ around the root is eventually constant.  The
existence and construction of the local limit of $\Loop(\sgt_N)$ is
straightforward, given a recent general result on local limits of
simply generated trees by Janson~\cite{janson:2012sgt}, see Theorem
\ref{th:janson} and Corollary~\ref{sglopp_cor}.

After providing the general construction of
$\sgloop$ we will mainly focus on the case when
$\sgt_N$ is a critical size conditioned Galton--Watson tree whose
offspring distribution $\pi$ is in the domain of a attraction of a
stable law with index $\alpha \in (1,2]$, i.e.~satisfies
  \eqref{stableintro}. By using the results of~\cite{kumagai:2008},
  and establishing suitable bounds on volume and resistance growth in
  $\sgloop$, we then prove the following:
\begin{theorem}\label{specdim_thm}
If $\pi$ is critical and satisfies~\eqref{stableintro} then
for almost every realization of $\sgloop$, the spectral dimension
satisfies
\[
\specdim(\sgloop)=\frac{2\alpha}{\alpha+1}.
\]
\end{theorem}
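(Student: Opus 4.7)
The plan is to apply the general framework of Kumagai and Misumi~\cite{kumagai:2008}, which converts matching volume and effective resistance bounds into a determination of the spectral dimension via~\eqref{dskm}. Since $2\alpha/(\alpha+1)=2a/(a+b)$ holds for $a=\alpha$ and $b=1$, the target is to establish, for almost every $\sgloop$,
\[
|B_{\sgloop}(r)|\asymp r^{\alpha}\quad\text{and}\quad\Reff(\root,B_{\sgloop}(r)^c)\asymp r,
\]
both up to slowly varying corrections. Concentration bounds along a geometric sequence of radii combined with a Borel--Cantelli argument then promote these to the almost sure input required by~\cite{kumagai:2008}.

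The starting point is the description of $\sgloop$ as $\Loop(\sgt)$, where $\sgt$ is the Kesten-type infinite tree that arises as the local limit of $\sgt_N$: it contains a unique infinite spine $\root=S_0,S_1,S_2,\ldots$ along which the number of children $L_i$ at $S_i$ has the size-biased distribution $\hat{\pi}$ with tail $\hat{\pi}([k,\infty))\asymp k^{1-\alpha}$; exactly one child, $S_{i+1}$, is chosen uniformly to continue the spine, and the remaining $L_i-1$ children root independent critical Galton--Watson sub-trees with offspring law $\pi$. In the looptree, each $S_i$ sits on a cycle of length $L_i+1$, and I write $D_i$ for the loop-distance from $S_{i-1}$ to $S_i$; conditional on $L_{i-1}=k$, $D_i$ is uniform on $\{1,\ldots,\ceil{(k+1)/2}\}$, so the $D_i$ are i.i.d.\ with tail $\asymp k^{1-\alpha}$ and lie in the domain of attraction of a stable law of index $\alpha-1$. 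Writing $H_n=\sum_{i=1}^n D_i$ and $N(r)=\max\{n:H_n\leq r\}$ for the spine depth reached by a ball of radius $r$, the stable-sum scaling gives $N(r)\asymp r^{\alpha-1}$ up to slowly varying factors.

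For the effective resistance, the sub-looptrees attached to non-spine loop vertices essentially act as dead-ends for the random walk, and the problem reduces to the resistance of the chain of spine loops. A single loop of length $L_i+1$ with source--sink separation $D_i$ has effective resistance $D_i(L_i+1-D_i)/(L_i+1)\asymp D_i$, and summing along the spine yields $\Reff(\root,B_{\sgloop}(r)^c)\asymp H_{N(r)}\asymp r$. The upper bound uses the obvious spine path, while the matching lower bound exploits the Nash--Williams inequality applied to the natural cut-sets provided by the loops. Concentration then follows from standard stable-sum estimates for the i.i.d.\ sequence $(D_i)$.

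The volume lower bound is the main work. The vertices on the spine loops alone contribute $\sum_{i\leq N(r)}\min(L_i+1,2(r-H_i))\asymp\sum_{i\leq N(r)}L_i\asymp r$, which turns out to be subleading. The dominant contribution comes from the sub-looptrees hanging off the non-spine loop vertices: a recursive decomposition of $\Loop(\tau)$ together with the tail $P(|\tau|\geq n)\asymp n^{-1/\alpha}$ for the total progeny of an unconditioned critical Galton--Watson tree in the stable domain yields the mean estimate $\mathbb{E}|B_{\Loop(\tau)}(\rho)|\asymp\rho^{\alpha-1}$. Summing this over the $\sum_{i\leq N(r)}L_i\asymp r$ non-spine loop vertices inside $B_{\sgloop}(r)$, with the remaining radius $r-H_i-d$ still bounded below by a constant multiple of $r$ for a positive fraction of the loops, produces a typical volume of order $r\cdot r^{\alpha-1}=r^{\alpha}$, and matching concentration is obtained by second-moment bounds together with truncation at an appropriate loop-size cutoff. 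The main obstacle is that the two heavy-tailed ingredients, the stable sum $\sum L_i$ and the sub-looptree volumes of exponent $\alpha-1$, are strongly correlated through the $L_i$, which simultaneously control the number of attached sub-looptrees and the radii $r-H_i-d$ still available to them; ruling out atypical configurations in which a single enormous loop distorts the ball shape and spoils the $r^{\alpha}$ scaling is the delicate technical point.
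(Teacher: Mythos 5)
Your overall strategy (Kumagai--Misumi with $a=\alpha$, $b=1$, volume $\asymp r^\alpha$ and resistance $\asymp r$) is the paper's strategy, and your loopspine decomposition, the stable-sum analysis of the distances $D_i$, and the mean volume computation $\E|B_{\Loop(\tau)}(\rho)|\asymp\rho^{\alpha-1}$ summed over $\asymp r$ outgrowth roots all match what the paper does. But there is a genuine gap in your resistance lower bound. You assert that the sub-looptrees ``essentially act as dead-ends'' so that the problem reduces to the series resistance of the spine loops, with Nash--Williams cut-sets taken on the chain. This fails: a vertex of an outgrowth at height $\geq n-d$ above a spine-loop vertex at distance $d<n$ from the root lies in $B(n;\sgloop)^c$, so a cut-set placed far along the chain does not separate $\root$ from $B(n;\sgloop)^c$ when a tall outgrowth is attached close to the root. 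This is not a negligible event: the probability that a single outgrowth has height $\geq n/2$ is of order $1/n$ (the paper's Lemma on $\P(\height(\Loop(\tau))\geq m)\leq c/m$, proved via the Lukasiewicz path and Wald's identity), and there are order $n$ outgrowth roots within distance $n/2$, so with probability bounded away from $0$ such an outgrowth exists. The paper handles this by marking tall outgrowths and placing the separating set at the root of the \emph{first} marked cycle; the resulting distance $D_n$ satisfies only a polynomial tail bound $\P(D_n\leq i)\lesssim (i/n)^{\alpha-1-\epsilon}$, giving $\P(\Reff(\root,B(n)^c)<\lambda^{-1}n)\leq c\lambda^{-q}$ for $q<\alpha-1$ rather than the concentration you claim. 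Since the Kumagai--Misumi hypotheses only require some $q_1>0$ here, this weaker tail suffices, but your argument as written neither produces it nor identifies the correct cut-set.

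Two secondary points. First, the height bound on $\Loop(\tau)$ that you never invoke is also needed for the volume \emph{lower}-tail estimate: the paper compares the truncated outgrowth volume $X^{(n)}$ with the total progeny $|\tau|$ precisely on the event that the outgrowth has height $\leq n$, and then runs a Laplace-transform argument yielding a stretched-exponential lower tail and hence all negative moments of $V_n$. Second, your proposed ``second-moment bounds'' for volume concentration would give at best a $\lambda^{-2}$ tail, which does not meet the condition $q_2>2$ required for the quenched conclusion in the Kumagai--Misumi framework; some higher-moment or exponential-moment input is genuinely needed there.
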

Further results on the escape time from the graph ball of radius $R$
around the root will be stated
in Theorem~\ref{KM_thm}. Our strategy for proving Theorem~\ref{specdim_thm}
is to establish the volume growth exponent
$a=\alpha$ and the resistance growth exponent 
$b=1$, cf.~\eqref{dskm}.  We note that the resistance growth exponent is in fact the same as in the critical tree $\sgt$. An intuitive explanation for this resistance growth is that $\sgt$ consists of exactly one infinite simple path from the root  (a \emph{spine}) which has finite independent critical Galton-Watson outgrowths. The height of the outgrowths is sufficiently small so that only $O(1)$ number of vertices at level $r/2$ in $\sgt$ are connected to level $r$ in $\sgt$ via a path which does note intersect the spine. We will show that roughly the same picture applies to the looptrees.

Theorem \ref{specdim_thm} is a statement about the spectral dimension
which holds for almost every realization of the random infinite
looptrees and in that case $d_\mathrm{s}$ is often referred to as the
{\it quenched spectral dimension}. Another way to view a quantitative
property of a random walk on a random graph is to average it over all
the realizations of the graphs. In that way, we define the so-called
{\it annealed spectral dimension} of the random looptrees as
\begin {equation}
 \bar{d}_{\mathrm{s}} =-2\lim_{n\to\infty}\frac{\log \E(p_{2n}^{\sgloop}(x,x))}{\log n}.
\end {equation}
In general, the quenched and annealed spectral dimensions need 
not be equal and one can exist while the other is not defined. 
We will show:
\begin{theorem}\label{specdim_thm_annealed}
If $\pi$ is critical and satisfies~\eqref{stableintro} then
the annealed spectral dimension of the infinite random 
looptree $\sgloop$ is
\[
\bar{d}_{\mathrm{s}}=\frac{2\alpha}{\alpha+1}.
\]
\end{theorem}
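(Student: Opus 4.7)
The plan is to upgrade the quenched statement of Theorem \ref{specdim_thm} to a quantitative form controlling the first moment of $p_{2n}^{\sgloop}(\root,\root)$ directly. The proof of Theorem \ref{specdim_thm} follows the Kumagai--Misumi scheme of \cite{kumagai:2008}, whose input is not merely the almost-sure growth $V(R) \asymp R^\alpha$ and $\Reff(\root, B(R)^c) \asymp R$, but polynomial control on the events where these fail. We expect to establish: for every $\eta > 0$ there exist constants $C_\eta, \rho(\eta) > 0$, with $\rho(\eta)$ taken as large as desired, such that
\[
\P\bigl(V(R) \notin [R^{\alpha-\eta}, R^{\alpha+\eta}]\bigr) + \P\bigl(\Reff(\root, B(R)^c) \notin [R^{1-\eta}, R^{1+\eta}]\bigr) \le C_\eta R^{-\rho(\eta)}.
\]
Plugging this into the Kumagai--Misumi machinery (the same input that drives Theorem \ref{KM_thm}) yields analogous polynomial control on the return probability: for every $\eta > 0$,
\[
\P\bigl(p_{2n}^{\sgloop}(\root,\root) \notin [n^{-\alpha/(\alpha+1)-\eta}, n^{-\alpha/(\alpha+1)+\eta}]\bigr) \le C_\eta' n^{-\rho'(\eta)},
\]
with $\rho'(\eta)$ likewise arbitrarily large.

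Let $A_n$ denote the good event above and fix $\rho'(\eta) > \alpha/(\alpha+1)+\eta$. Using the interval bounds on $A_n$ together with the trivial estimate $p_{2n} \le 1$ on $A_n^c$ gives
\[
\tfrac{1}{2}\, n^{-\alpha/(\alpha+1)-\eta} \le \E\bigl[p_{2n}^{\sgloop}(\root,\root)\bigr] \le 2\, n^{-\alpha/(\alpha+1)+\eta}
\]
for all sufficiently large $n$ (depending on $\eta$). Taking logarithms, dividing by $\log n$, and letting $\eta \downarrow 0$ pins down the limit of $-\log \E[p_{2n}^{\sgloop}(\root,\root)]/\log n$ to $\alpha/(\alpha+1)$, which yields $\bar{d}_{\mathrm{s}} = 2\alpha/(\alpha+1)$.

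The main obstacle is securing polynomial tails with exponent $\rho(\eta) > \alpha/(\alpha+1)$, a threshold that drives the annealed conclusion but is not required for the quenched one. The resistance side should be comparatively tame: the infinite spine of $\sgloop$ contributes an essentially deterministic linear lower bound on $\Reff$, while the upper bound reduces to controlling the heights of the Galton--Watson outgrowths, giving polynomial tails of any desired order. The volume side is more delicate, since $V(R)$ is a sum over the spine of heavy-tailed loop lengths along with the stable-domain Galton--Watson subpopulations hanging off them; pushing $\rho(\eta)$ above $\alpha/(\alpha+1)$ will require careful concentration estimates for truncated sums of $\alpha$-stable attracted variables, exploiting the independence structure along the spine.
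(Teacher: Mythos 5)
Your overall architecture (feed quantitative tail bounds into the Kumagai--Misumi machinery, then integrate using $p_{2n}\leq 1$ on the bad event) is reasonable in spirit, but the input estimates you posit are not available, and two of the four are likely very hard or false at the strength you need. For the lower deviation of the resistance, the paper's Lemma~\ref{l:lres} only gives $\P(\Reff(\root,B(n;\sgloop)^c)<\lambda^{-1}n)\leq c\lambda^{-q}$ for $q<\alpha-1$, an exponent that tends to $0$ as $\alpha\downarrow 1$; the separating-set argument is capped at $\alpha-1$ because the probability that a given loop of the loopspine carries a tall outgrowth is governed by the heavy tail of the size-biased loop length $\offhat$, which has index $\alpha-1$. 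So your claim that the resistance side is ``comparatively tame'' with ``polynomial tails of any desired order'' has the difficulty exactly backwards: this is the weakest tail in the whole paper. Similarly, for the upper deviation of the volume the paper only proves $\P(V_n>\lambda a_n)\leq c\lambda^{-1}$ by Markov's inequality from $\E(V_n)\leq k_2 a_n$; pushing this to an arbitrarily large exponent would require genuine new concentration arguments against the heavy tails of the loop lengths and outgrowth sizes, and nothing in the paper supplies them.

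The reason the theorem nevertheless holds is that these strong tails are not needed. The annealed heat-kernel bound is Part (3) of Theorem~\ref{KM_thm}, which Kumagai--Misumi (Proposition~1.4) derive from three inputs: \eqref{J1} with \emph{any} $q_1>0$ (so the weak $q<\alpha-1$ resistance tail and the weak $\lambda^{-1}$ volume tail suffice), \eqref{J2} with $q_2>2$ (here only the volume \emph{lower} tail is random, and Lemma~\ref{exp_inv_vol} gives it with arbitrarily large exponent; the resistance upper bound $\Reff(\root,y)\leq d_\sgloop(\root,y)$ is deterministic), and crucially the first-moment bound \eqref{J3}, $\E\bigl(\Reff(\root,B(n;\sgloop)^c)\,V_n\bigr)\leq c_3\,v(n)r(n)$. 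It is \eqref{J3} that replaces the strong tail bounds you are asking for, and it follows trivially from $\Reff\leq n$ together with the expected-volume estimate $\E(V_n)\leq k_2 a_n$ of Lemma~\ref{l:hausdorff}. That expected-volume bound --- which is special to looptrees, since the analogous expectation is infinite for the trees themselves when the offspring variance is infinite --- is the one genuinely new ingredient needed to pass from the quenched to the annealed statement, and it is absent from your proposal. As written, your argument therefore rests on unproven (and in part probably unprovable) estimates, while missing the mechanism that actually closes the proof.
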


The annealed result relies on the same type of resistance and volume
estimates as the quenched result but requires stronger
bounds. Furthermore, one needs bounds on the expected volume of the
graph ball $B(n;\sgloop)$ of radius $n$ centered on the root in
$\sgloop$. This bound is provided in Lemma \ref{l:hausdorff} which
also shows that the so-called annealed 
volume growth exponent
of $\sgloop$ is
\begin {equation} \label{hausdim}
\lim_{n\rightarrow\infty}
 \frac{\log{\E(|B(n;\sgloop)|)}}{\log n} = \alpha.
\end {equation}
This value coincides with the 
(quenched) Hausdorff dimension of 
random stable looptrees, cf.~\cite[Thm.~1]{curien:2013a}.
We remark that for the infinite tree $\sgt$ with offspring
distribution \eqref{stableintro} studied in \cite{croydon:2008} it was
not possible to obtain annealed results corresponding to Theorem
\ref{specdim_thm_annealed} and Eq.~\eqref{hausdim} when the variance of
the offspring distribution is infinite. The reason is that the
vertices in the trees tend to have very large degrees which causes
trouble when averaging over the trees, e.g. $\E(|B(n;\sgt)|) =
\infty$. The degrees of vertices in the looptrees are however bounded
by 4 which eliminates this problem.

\subsection{Outline and notation}

The rest of this paper is organized as follows.  In
Section~\ref{trees_sec} we precisely define our model and present the
local limit theorem along with some basic estimates on looptrees.  
Theorems~\ref{specdim_thm} and \ref{specdim_thm_annealed} are 
proved in Section~\ref{spec_sec}.
In the Appendix we collect some results about slowly varying functions
and their application to some of the random variables which we study
in this paper.

The following notation will be used throughout the paper.
$\E$ denotes expected value and $\P$ denotes probability,
$\lfloor \cdot\rfloor$ and $\ceil{\cdot}$ denote the
floor and ceiling functions, 
the minimum of $a$ and $b$ is written $a\wedge b$ and
the maximum $a\vee b$,
the indicator of an event $A$ is written $\mathbf{1}_A$, and
$f(x)\sim g(x)$ denotes that $f(x)/g(x)\to 1$
for the appropriate limiting value of $x$
(usually $x\to\infty$ or $x\to1^-$).

\section{Plane trees and looptrees}
\label{trees_sec}

In this paper we will define rooted plane trees in the same, slightly
unconventional, way as in \cite{bjornberg:2013}. Let $T_\infty$ be an
infinite tree with vertex set 
$V(T_\infty) = \bigcup_{n\geq0} \mathbb{Z}^n$
i.e.~the set of all finite sequences of integers. The unique element
in $\mathbb{Z}^0$ is called the root and is denoted by $\root$. The
concatenation of two elements $u,v\in V(T_\infty)$ is denoted by
$uv$. The edges in $T_\infty$ are defined by connecting each vertex
$vi$, $i\in \mathbb{Z}$, $v\in V(T_\infty)$ to the vertex $v$ by an
edge and we say that $v$ is the parent of $vi$ and conversely that
$vi$ is a child of $v$.  When vertices share the same parent $v$ we
call them siblings and they are ordered from left to right by
declaring that $vi$ is to the left of $vj$ if
\begin {itemize}
\item $i=0$ ($v0$ is the leftmost child) or
\item $ij > 0$ and $i<j$ or
\item $i > 0$ and $j<0$.
 \end {itemize} 
Two siblings are said to be adjacent if one is immediately to 
the left of the other, i.e.~if they are of the form $vi$ 
and $v(i-1)$, with $i\neq 0$.
We say that $v$ is a descendant of $u$,
written $v\succeq u$, if there is a self-avoiding path
from $\root$ to $v$ containing $u$.  If $v\succeq u$
and $v\neq u$ we write $v\succ u$.

A rooted plane tree $\Tree$ is defined as a subtree of $T_\infty$ which contains the root $\root$ and has the property that for every vertex $v\in V(\Tree)$ there is a number
$\out(v) \in \{0,1,\ldots\}\cup\{\infty\}$, called the \emph{outdegree} of
$v$, such that $vi \in V(\Tree)$ if and only if $\lfloor -\out(v)/2\rfloor
< i \leq \lfloor \out(v)/2\rfloor$. 
The child $vi$ of $v$ in $\Tree$ is said to be the
\emph{rightmost} child of $v$ if it has no siblings in $\Tree$
to its right (thus the rightmost child is $v0$ if 
$\out(v)=1$, $v1$ if $\out(v)=2$, or $v(-1)$ if $\out(v)\geq3$).
Siblings $vi$, $vj$ of $v$ in $\Tree$ are said to be \emph{adjacent} 
if they are either
adjacent in $T_\infty$, or if $\out(v)<\infty$ and
$i=\lfloor \out(v)/2\rfloor$ and 
$j=\lfloor -\out(v)/2\rfloor$.
From now on we assume that all trees are rooted plane 
trees and we will simply refer to them as trees.

The set of trees is denoted by $\Tset$  and the set of trees with $n$ vertices is denoted by $\Tset_n$. A metric is defined on $\Tset$ as follows. For every $R\geq 0$ define the set
\begin {equation}
V^{[R]} = \bigcup_{n=0}^R \{\lfloor -R/2\rfloor+1,\lfloor -R/2\rfloor+2,\ldots,\lfloor R/2\rfloor-1,\lfloor R/2\rfloor\}^n
\end {equation}
and for $\Tree\in \Tset$ let $\Tree^{[R]}$ be the finite subtree of $\Tree$ with vertex set $V(\Tree)\cap V^{[R]}$. Define the metric
\begin {equation}
 d_\Tset(\Tree_1,\Tree_2) =  \left(1+\sup\left\{R~:~\Tree_1^{[R]}=\Tree_2^{[R]}\right\}\right)^{-1},\qquad \Tree_1,\Tree_2\in\Tset.
\end {equation}
The set $\Tset$ is equipped with the Borel 
$\sigma$-algebra generated by $d_\Tset$. 

To each tree $\Tree$ we associate a looptree $\Loop(\Tree)$ as
follows. The vertex set of $\Loop(\Tree)$ is the same as that of
$\Tree$ and the edges are constructed by connecting adjacent siblings
with an edge and by connecting each parent to both its leftmost and
rightmost child with an edge. Thus each vertex of finite outdegree 
$k\geq1$ in
the tree corresponds to a cycle of length $k+1$ in the looptree and a
vertex of infinite degree corresponds to a doubly--infinite path.
Note that our definition of $\Loop(\tree)$ coincides with what is
called $\Loop(\tree)$ in~\cite{curien:2013b} and 
$\Loop'(\tree)$ in~\cite{curien:2013a}.
We denote the set of looptrees by $\Lset$. 

For any rooted graph $G$ with root $\root$ and graph metric $d_G$,
the graph ball of radius $R$ centered on the root of $G$ is
defined to be
\begin {equation} \label{balldef}
 B(R;G) := \{x \in G ~:~d_G(x,\root) < R\}.
\end {equation}
We identify $B(R;G)$ with the induced subgraph of $G$
spanned by its vertices.
A metric is defined on $\Lset$ by
\begin {equation}
 d_\Lset(L_1,L_2) = 
\left(1+\sup\left\{R~:~B(R;L_1)=
B(R;L_2)\right\}\right)^{-1},\qquad L_1,L_2 \in \Lset
\end {equation}
and the set $\Lset$ is equipped with the Borel $\sigma$-algebra
generated by $d_\Lset$.  Limits of sequences in metrics of the type
$d_\Tset$ and $d_\Lset$ are often referred to as \emph{local limits}
since convergent sequences are eventually constant in every finite
`neighbourhood' of the root, where the precise definition of
neighbourhood depends on the context.  We have the following important
result:
\begin{lemma} \label{continuous} The function $\Loop: \Tset
\rightarrow \Lset$ is a homeomorphism.
\end{lemma}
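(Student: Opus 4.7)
The map $\Loop$ is a bijection: $\Loop(\tree)$ has the same vertex set $V(\tree)$ as $\tree$, and a subset of $V(T_\infty)$ uniquely determines both a plane subtree of $T_\infty$ and its associated looptree. The content of the lemma is therefore the continuity of $\Loop$ and of $\Loop^{-1}$ in the two metrics $d_\Tset$ and $d_\Lset$.

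The key quantitative input is a two-sided comparison between cycle-distance and coordinate inside a single cycle: for each vertex $u$ of outdegree $k$ in $\tree$ and each child $uj$, a case analysis on the two ways round the cycle of length $k+1$ shows that the cycle-distance $\ell$ between $u$ and $uj$ satisfies $\ell\le|j|+1$ and, independently of $k$, $|j|\le\ell$. Summing along the ancestor path $\root=u_0,u_1,\ldots,u_d=v$ of a vertex $v$ yields the two embeddings $B(R;\Loop(\tree))\subseteq V^{[2R]}$ (looptree balls of radius $R$ sit in bounded tree-coordinate windows) and $V^{[R]}\cap V(\tree)\subseteq B(CR^2;\Loop(\tree))$ (bounded tree-coordinate windows sit in looptree balls of quadratically larger radius).

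Continuity of $\Loop$ then follows from the fact that $B(R;\Loop(\tree))$ is already determined by the $V^{[2R]}$-induced subgraph of $\Loop(\tree)$, since every looptree path of length $<R$ from the root stays in $V^{[2R]}$. I would show that this induced subgraph depends only on $\tree^{[R']}$ for the linear choice $R'=2R+2$, by splitting on the outdegree at each parent $u$ appearing in the subgraph: if $\out(u)\le 2R+1$ then $\tree^{[R']}$ sees all children of $u$, pins $\out(u)$ down, and determines the whole cycle at $u$; if $\out(u)\ge 2R+2$ then the \emph{wrap-around} sibling edge in the cycle has at least one endpoint of coordinate-absolute-value $>R$ and so lies outside $V^{[2R]}$, leaving only consecutive-sibling edges and the edges from $u$ to its leftmost and rightmost children, all of which are insensitive to the exact value of $\out(u)$. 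For continuity of $\Loop^{-1}$ I would take $R'$ of order $R^2$: the quadratic embedding places every ancestor $u_i$ of every $v\in V^{[R]}\cap V(\tree)$ well inside $B(R';\Loop(\tree))$, leaving a linear-size margin that reveals either the entire cycle at $u_i$ (certifying $\out(u_i)\le R$) or at least $R$ consecutive children of $u_i$ (certifying $\out(u_i)>R$); this is exactly what is needed to decide, for every $|j|\le\lfloor R/2\rfloor$, whether $u_ij\in V(\tree)$, and hence to recover $\tree^{[R]}$.

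The main obstacle is that vertices of $\tree$ can have arbitrarily large or infinite outdegree, which a priori could shift the local looptree structure far away from what is visible in any bounded tree-coordinate window. The inequality $|j|\le\ell$ is the device that tames this, by forcing the near-root portion of the looptree to depend only on the small-index children of each cycle; read in the reverse direction it gives the quadratic bound needed for continuity of $\Loop^{-1}$.
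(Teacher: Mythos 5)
Your proof is correct and follows essentially the same route as the paper's (which only sketches the argument): both directions come down to comparing looptree balls $B(R;\Loop(\tree))$ with the tree-coordinate windows $V^{[R']}$ and showing each determines the other after a change of radius. Your version is more quantitative — the two-sided bound $|j|\le\ell\le|j|+1$, the linear radius for continuity of $\Loop$ versus the quadratic radius for $\Loop^{-1}$, and the treatment of the wrap-around edge at vertices of large or infinite outdegree — and correctly fills in the details the paper leaves to the reader, in particular for the inverse map.
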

\begin {proof}
 By construction, $\Loop$ is a bijection. We will outline the proof of
 the continuity of $\Loop$ and the continuity of its inverse may be
 shown in a similar way. Let $(\Tree_i)_{i\geq 0}$ be a sequence of
 trees in $\Tset$ which has a limit $\Tree$. This means that for every
 $R\geq 0$, eventually (for all $i$ large enough)
 $\Tree_i^{[R]}=\Tree^{[R]}$. Thus, for each $R\geq 0$, eventually
 $\Loop(\Tree_i^{[R]}) = \Loop(\Tree^{[R]})$. For each $R'\geq 0$ one
 may choose $R$ large enough ($R\geq 2R'+1$ suffices) 
such that eventually
 $B(R';\Loop(\Tree_i^{[R]})) = B(R';\Loop(\Tree_i))$ and such that
 $B(R';\Loop(\Tree^{[R]})) = B(R';\Loop(\Tree))$. Therefore,
 eventually
\begin {equation*}
B(R';\Loop(\Tree_i)) =  B(R';\Loop(\Tree_i^{[R]}) 
= B(R';\Loop(\Tree^{[R]})) = B(R';\Loop(\Tree)).
\end {equation*}

\end {proof}

\subsection{Simply generated trees and the local limit theorem}
\label{local_sec}

Let $(w_i)_{i\geq 0}$ be a sequence of non-negative numbers. 
Simply generated trees with $n$ vertices are random 
trees $\sgt_n$ in $\Tset$  defined by assigning a weight
\begin {equation} \label{sgtweight}
 W(\Tree) = \prod_{v\in V(\Tree)} w_{\out(v)}
\end {equation}
to each tree $\tree\in\Tset$ and letting
\begin {equation}  \label{sgtprob}
 \P(\sgt_n = \Tree) = \frac{W(\Tree)}{\sum_{\Tree'\in \Tset_n}W(\Tree')}, \quad \Tree \in \Tset_n \quad (0 ~\text{otherwise}).
\end {equation}
The random looptrees with $n$ vertices with which we are 
concerned in this paper are defined 
by $\sgloop_n = \Loop(\sgt_n)$.

Janson~\cite{janson:2012sgt} showed in full 
generality that the sequence of random trees
$\sgt_n$ converges weakly towards an infinite random tree $\sgt$ and
we state this as Theorem~\ref{th:janson} below. 
Before stating the theorem we describe the
limit $\sgt$ which requires a few definitions. Denote the generating
function of $(w_i)_{i\geq 0}$ by
\begin{equation}
g(z) = \sum_{i=0}^\infty w_i z^i
\end{equation}
 and denote its radius of convergence by $\rho$. 
Note that if $\rho > 0$ then for each $0 < t < \rho$, the sequence of probabilities
\begin {equation}
p_i(t) = \frac{t^i w_i}{g(t)}
\end {equation}
when inserted in \eqref{sgtweight} and \eqref{sgtprob}, defines the same random tree $\sgt_n$ as $w_i$. The largest mean of  $p_i(t)$ is given by 
\begin {equation} \label{defgamma}
 \gamma = \lim_{t\nearrow \rho} \frac{tg'(t)}{g(t)}.
\end {equation}
In the case $\rho =0$ we define $\gamma=0$. Let $\tau$ be
\begin {enumerate}
\item the solution $\tau\in(0,\rho)$ to $\tau g'(\tau)/g(\tau) = 1$,
  if $\gamma > 1$ (which is unique since the ratio in
  \eqref{defgamma} is increasing, see e.g.~\cite[Lemma
    3.1]{janson:2012sgt}); or
\item $\tau = \rho$, if $\gamma \leq 1$.
\end {enumerate}
Then define $\pi_i=\lim_{t\uparrow\tau} p_i(t)$
if $\rho>0$ or $\pi_i=\delta_{i,0}$ if $\rho=0$. 
(Note that the sequence $(\pi_i)_{i\geq0}$ is well-defined  
in all cases
since $g(t)$ is monotonically increasing in $t>0$.)
Let
$\off$ be a random variable distributed by $(\pi_i)_{i\geq 0}$.  One
may view $\off$ as an offspring distribution of a Galton--Watson
process and we denote its expected value by $\mean$. By definition
\begin {equation}
 \mean = \min\{\gamma,1\} \leq 1
\end {equation}
and so $\off$ is either critical or sub-critical. 
For $\rho>0$, $\sgt_n$ is a Galton--Watson tree with 
offspring distribution $\xi$ conditioned on
having $n$ vertices.
Define also the random variable $\offhat$ by
\begin {equation}\label{hat_xi_eq}
 \P(\offhat = k) = \left\{
\begin {array}{ll}
k\pi_k & \text{if}~k<\infty \\
1-\mean & \text{if}~k=\infty.
\end {array}\right.
\end {equation}
(If $\mean=1$ then $\offhat$ is simply the size-biased
version of $\off$.)

The tree $\sgt$ will now be introduced, following a construction by
Janson \cite{janson:2012sgt}. It is a modified Galton--Watson tree
which contains two types of vertices, 
called normal and special, which
independently give birth to vertices in the following recursive
way. First of all the root is declared to be special. Special vertices
give birth to vertices according to the offspring distribution
$\offhat$. If the number of children of a special vertex is finite,
one of them is chosen uniformly and declared to be special and the
rest are declared to be normal. If the number of children is infinite
(which may happen when $\mean < 1)$ then all the children are declared
to be normal. Normal vertices give birth to normal vertices according
to the offspring distribution $\off$.

\begin {theorem}[Janson \cite{janson:2012sgt}] \label{th:janson}
For any sequence $(w_i)_{i\geq 0}$ such that $w_0>0$ and $w_k >
0$ for some $k\geq 2$ it holds that 
 $\sgt_n \dto \sgt$ as  $n\to\infty$.
\end {theorem}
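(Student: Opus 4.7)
The plan is to establish local convergence by proving that for every fixed finite rooted plane tree $T^*$ contained in some $V^{[R]}$, the cylinder probability $\PP(\sgt_n^{[R]} = T^*)$ converges to $\PP(\sgt^{[R]} = T^*)$ as $n\to\infty$. Since such cylinders generate the Borel $\sigma$-algebra of $(\Tset,d_\Tset)$, this gives the stated weak convergence.

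The first step is a reduction by reweighting. When $\rho > 0$, replacing $w_i$ by $p_i(t) = t^i w_i/g(t)$ multiplies $W(\Tree)$ by $t^{n-1}/g(t)^n$ (using $\sum_{v\in V(\Tree)}\out(v) = n-1$ for any tree on $n$ vertices), a factor constant on $\Tset_n$ that cancels in \eqref{sgtprob}. Letting $t\uparrow\tau$, we may assume $\sgt_n$ is a Galton--Watson tree with offspring law $(\pi_i)$ conditioned on having $n$ vertices: critical if $\gamma\geq 1$ and subcritical if $\gamma<1$. The degenerate case $\rho = 0$ forces $\pi_i = \delta_{i,0}$ and is handled separately.

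Next I would factor the cylinder probability. Split the vertices of $T^*$ into \emph{interior} vertices (whose children in $\sgt_n$ all lie in $V^{[R]}$, forcing $\out_{\sgt_n}(v)=\out_{T^*}(v)$) and \emph{boundary} vertices (which may carry additional children outside $V^{[R]}$). Writing $Z_n = \sum_{\Tree\in\Tset_n}\prod_v \pi_{\out(v)}$ for the unconditioned partition function, the cylinder probability takes the schematic form
\begin{equation*}
\PP\bigl(\sgt_n^{[R]}=T^*\bigr) = \frac{1}{Z_n}\Bigl(\prod_{v\text{ int.}} \pi_{\out_{T^*}(v)}\Bigr)\sum_{\mathbf{m},\mathbf{n}} C(\mathbf{m})\Bigl(\prod_{v\text{ bdy.}}\pi_{m_v}\Bigr)\prod_j Z_{n_j},
\end{equation*}
where the sum ranges over extended outdegrees $m_v\geq \out_{T^*}(v)$ at boundary vertices and over hidden subtree sizes $(n_j)$ with $\sum_j n_j = n - |T^*|$, and $C(\mathbf{m})$ is a combinatorial factor for placing the hidden children. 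The Otter--Dwass identity $Z_n = n^{-1}\PP(S_n = -1)$ with $S_n = \sum_{i=1}^n (\off_i - 1)$, combined with local limit theorems in the appropriate stable domain of attraction, provides the asymptotics needed to control the ratios $Z_{n-k}/Z_n$ that appear in this sum.

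In the critical regime $\mean = 1$ one has $Z_{n-k}/Z_n\to 1$ for every fixed $k$, and only the completions in which one boundary vertex per generation absorbs a macroscopic subtree (the emerging \emph{spine}) contribute non-trivially. The sum then collapses into a product of size-biased weights $m\pi_m$ along the spine and ordinary $(\pi_i)$-weights elsewhere, matching exactly the construction of $\sgt$ as a modified Galton--Watson tree with offspring law $\offhat$ at special vertices. The main obstacle is the subcritical regime $\mean < 1$, in which $Z_n$ exhibits condensation with asymptotics driven by $\pi_{n-1}$; here one must show, via uniform one-big-jump estimates for subexponential random walks, that the sum is dominated by configurations in which a single boundary vertex absorbs essentially all of the $n - |T^*|$ missing vertices. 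The resulting concentration precisely reproduces the atom $\PP(\offhat = \infty) = 1 - \mean$ in the definition of $\sgt$, and combining both regimes with the recursive construction of $\sgt$ completes the proof.
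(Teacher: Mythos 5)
First, a point of comparison: the paper does not prove Theorem~\ref{th:janson} at all --- it is quoted from Janson \cite{janson:2012sgt}, with special cases credited to Kennedy, Aldous--Pitman, Jonsson--Stef\'ansson and Janson--Jonsson--Stef\'ansson --- so there is no in-paper proof to measure your sketch against. Your architecture (reduce to cylinder probabilities, reweight to a conditioned Galton--Watson tree, split into interior and boundary vertices, control ratios $Z_{n-k}/Z_n$ via Otter--Dwass) is the classical route by which the finite-variance critical case and the power-law subcritical case were originally established, and the way the size-biased law $\offhat$ and the atom $1-\mean$ at infinity emerge in your sketch is the right picture.

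The genuine gap is one of generality: the theorem is asserted for an \emph{arbitrary} weight sequence with $w_0>0$ and $w_k>0$ for some $k\geq2$, with no regularity hypothesis on $(\pi_i)$, whereas both analytic inputs you invoke require such regularity. In the critical regime you appeal to ``local limit theorems in the appropriate stable domain of attraction'' to obtain $Z_{n-k}/Z_n\to1$; a general critical offspring law need not lie in any stable domain of attraction, so this step does not apply as stated (one can substitute the strong ratio limit theorem for aperiodic recurrent lattice walks, but that must be said, and the restriction of $n$ to the admissible congruence class handled). More seriously, in the condensation regime $\mean<1$ the maximally tilted law $(\pi_i)$ need not be subexponential, so the ``uniform one-big-jump estimates for subexponential random walks'' and the claim that $Z_n$ has asymptotics ``driven by $\pi_{n-1}$'' are simply unavailable in general; this is precisely why the general subcritical case resisted proof until \cite{janson:2012sgt}, whose argument deliberately avoids any asymptotics for $Z_n$ and instead controls the conditional law of finitely many outdegrees given $\sum_i(\off_i-1)=-1$ directly, using exchangeability and the cycle lemma. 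Finally, the case $\rho=0$, which you set aside in one clause, is itself a nontrivial theorem (essentially the content of \cite{janson:2011}): there is no equivalent probability weight sequence, and one must show directly that the root outdegree of $\sgt_n$ tends to infinity while the children visible in $V^{[R]}$ are leaves with high probability. As it stands your argument proves the statement only under additional tail-regularity assumptions on $\pi$, not in the stated generality.
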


Special cases of Theorem \ref{th:janson} have been proven independently by many
authors. The original proof in the case $\mean = 1$ was implicitly
given by Kennedy \cite{kennedy:1975} and later Aldous and Pitman
\cite{aldous:1986} gave an explicit proof. The case $0<\mean<1$ was
originally proven, for weights obeying a power law, by Jonsson and
Stefánsson \cite{jonsson:2011} and the case $\mean = 0$ was proved by
Janson, Jonsson and Stefánsson \cite{janson:2011} in almost complete
generality.

By Theorem \ref{th:janson} and Lemma \ref{continuous} we immediately
get the corresponding convergence for the random looptrees. Define
$\sgloop = \Loop(\sgt)$.
\begin {corollary}\label{sglopp_cor}
 For any sequence $(w_i)_{i\geq 0}$ such that $w_0>0$ and $w_k >
0$ for some $k\geq 2$ it holds that 
$\sgloop_n \dto \sgloop$ as  $n\to\infty$.
\end {corollary}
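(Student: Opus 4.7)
The plan is essentially an immediate application of the continuous mapping theorem, so the proposal is short. Theorem~\ref{th:janson} provides $\sgt_n \dto \sgt$ as random elements of $(\Tset,d_\Tset)$, and Lemma~\ref{continuous} gives that $\Loop:\Tset\to\Lset$ is continuous (in fact a homeomorphism). Since $\sgloop_n=\Loop(\sgt_n)$ and $\sgloop=\Loop(\sgt)$ by definition, the continuous mapping theorem yields $\sgloop_n\dto\sgloop$ in $(\Lset,d_\Lset)$.

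The only thing to verify is that the hypotheses of the continuous mapping theorem are met in this setting, namely that $\Tset$ is a suitable (e.g.\ separable metric) space and that $\Loop$ is measurable with respect to the Borel $\sigma$-algebras on $\Tset$ and $\Lset$. Separability of $(\Tset,d_\Tset)$ follows because the countable collection of finite trees is dense in $\Tset$, and Lemma~\ref{continuous} already gives that $\Loop$ is continuous, hence Borel measurable with respect to the $\sigma$-algebras introduced after the metrics $d_\Tset$ and $d_\Lset$.

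I do not anticipate any genuine obstacle: the work has already been done in Theorem~\ref{th:janson} and Lemma~\ref{continuous}. If one wanted a self-contained argument without invoking the continuous mapping theorem, one could alternatively proceed directly: fix a bounded continuous function $F:\Lset\to\R$, note that $F\circ\Loop:\Tset\to\R$ is bounded and continuous by Lemma~\ref{continuous}, and apply the definition of weak convergence to the sequence $\sgt_n\dto\sgt$ to conclude $\E[F(\sgloop_n)]=\E[(F\circ\Loop)(\sgt_n)]\to\E[(F\circ\Loop)(\sgt)]=\E[F(\sgloop)]$, which is exactly $\sgloop_n\dto\sgloop$.
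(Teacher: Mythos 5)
Your argument is correct and is exactly how the paper obtains the corollary: it is stated as an immediate consequence of Theorem~\ref{th:janson} and Lemma~\ref{continuous}, i.e.\ the continuous mapping theorem applied to the homeomorphism $\Loop$. The additional remarks on separability and measurability are fine but not needed beyond what the continuity in Lemma~\ref{continuous} already provides.
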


The tree $\sgt$ is qualitatively different depending on whether $\mean
= 1$ or $\mean < 1$. In the former case, $\mean = 1$, the special
vertices will form an infinite path starting at the root, which we
refer to as an \emph{infinite spine}. The normal children of the
special vertices are the roots of independent critical Galton--Watson
trees with offspring distribution $\off$, and we will refer to these
trees as \emph{outgrowths} from the spine. This construction is
initially due to Kesten~\cite{kesten:1986}. In the latter case, $\mean
< 1$, the special vertices form an a.s.~finite path, which we refer to
as a \emph{finite spine} and its length $\ell$ is distributed by
\begin {equation}
\P(\ell=i) = (1-\mean)\mean^i, \quad i\geq 0
\end {equation}
(where we understand $0^0 = 1$). The finite spine starts at the root
and ends at a special vertex having infinite degree. The outgrowths
from the finite spine are defined as in the former case and are now
sub-critical Galton--Watson trees. This construction is due to Jonsson
and Stefánsson \cite{jonsson:2011}. In the special case $\mean = 0$
the spine has length 0 and the outgrowths are all empty and thus
$\sgt$ is deterministic.

From the description of $\sgt$ one may arrive at a similar description
of $\sgloop=\Loop(\sgt)$.  The spine of $\sgt$ corresponds to what may
be called a \emph{loopspine} in $\sgloop$. When $\mu = 1$ the loopspine consists of an infinite sequence of 
cycles $(C_i)_{i\geq1}$ whose lengths are independent copies of 
$\offhat+1$.  These cycles form a chain $\cC$ by recursively 
attaching $C_{i+1}$ to $C_i$ at a point $x_{i+1}$ chosen
uniformly (independently of all other choices) 
on $C_i\setminus\{x_i\}$.  The recursion starts with 
$x_1=\root$.  At each vertex of $\cC\setminus\{x_i:i\geq1\}$
we then attach an independent copy of $\Loop(\tree)$,
where $\tree$ is a Galton--Watson tree with offspring
distribution~$\xi$.  We call these copies of
$\Loop(\tree)$ \emph{outgrowths}. The case $\mu<1$ is similar however the sequence $(C_i)_{1 \leq i \leq \ell +1}$ is almost surely finite and the last element is not a cycle but a bi-infinite path (the  graph with vertex set $\mathbb{Z}$ and an edge between $i$ and $j$ iff $|i-j|=1$). In the extreme case when $\mu=0$ the looptree is deterministic and equals a rooted bi-infinite path.

In the following two subsections we provide
basic estimates on the volume growth of $\sgloop$.  These results do
not rely on any assumptions about $\xi$ being in the domain of
attraction of a stable distribution, hence we separate them from the
main proofs in Section~\ref{spec_sec}.

\subsection{A bound on the height of a random looptree}

Let $G$ be a finite graph, and single out a root vertex in $G$,
which we denote $\root$.  Letting $d_G$ denote the graph metric in
$G$, we define the \emph{height} of $G$ as
\begin{equation}
\height(G)=\max_{v\in G} d_G(\root,v).
\end{equation}
In this section we will prove the following result;
to avoid trivialities we assume that $\xi$ is not
identially 1.
\begin{lemma}\label{height_lem}
Let $\tree$ be a Galton--Watson tree with
critial offspring distribution $\xi$ 
(i.e.,  $\E(\xi)=1$).
Then there is a constant $c>0$ such that 
for all $m\geq 2$,
\begin{equation}\label{GW-loop-ht}
\P(\height(\Loop(\tree))\geq m)\leq \frac{c}{m}.
\end{equation}
\end{lemma}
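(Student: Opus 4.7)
The plan is to exploit the recursive structure inherited from the branching property of $\tree$. Let $D=\out(\root)\sim\xi$ and let $\tree_1,\ldots,\tree_D$ be the i.i.d.\ subtrees rooted at the root's children. In $\Loop(\tree)$ the root sits on a cycle of length $D+1$; its $i$-th child is at cycle-distance $c_i:=\min(i,D-i+1)\in\{1,\ldots,\lceil D/2\rceil\}$ from $\root$, and $\Loop(\tree_i)$ is attached at that child. Writing $H^*:=\height(\Loop(\tree))$ and $H^*_i$ for i.i.d.\ copies of $H^*$ independent of $D$, this yields the distributional recursion
\[
H^* \eqd \max\Bigl(\lceil D/2\rceil,\ \max_{1\leq i\leq D}(c_i+H^*_i)\Bigr).
\]
Set $q_m:=\P(H^*\geq m)$. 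The first argument of the $\max$ contributes $\P(\lceil D/2\rceil\geq m)=\P(\xi\geq 2m-1)\leq 1/(2m-1)$ by Markov's inequality and criticality, which is already of the desired order $O(1/m)$.

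For the second argument I would condition on $D=d<2m-1$ and observe that each value $j\in\{1,\ldots,\lceil d/2\rceil\}$ is attained by at most two of the $c_i$. A union bound then yields the recursive inequality
\[
q_m \leq \P(\xi\geq 2m-1) + 2\sum_{j=1}^{m-1} q_{m-j}\,\P(\xi\geq 2j-1).
\]
However, since $\P(\xi\geq\cdot)$ is non-increasing and $\sum_{k\geq 1}\P(\xi\geq k)=\E\xi=1$, the total kernel mass $2\sum_j\P(\xi\geq 2j-1)$ lies in $[1,2]$, so this linear inequality is by itself not contractive and does not directly close an induction of the form $q_m\leq c/m$.

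The key refinement is to pass to the \emph{exact} recursion $P_m=\sum_{d=0}^{2m-2}\pi_d\prod_{i=1}^d P_{m-c_i}$ (with $P_m:=1-q_m$) and Taylor-expand to second order in the (small) quantities $q_{m-j}$. The linear term has weights $B_j:=\P(\xi\geq 2j-1)+\P(\xi\geq 2j)$ summing to exactly $\E\xi=1$, a direct manifestation of criticality; the second-order contribution is a strictly negative quadratic form in the $q_{m-\cdot}$ whose weights involve the tail of $\xi$. This quadratic damping is what ultimately produces the $O(1/m)$ decay---in direct parallel with Kolmogorov's classical estimate that $\P(\height(\tree)\geq m)\sim 2/(\sigma^2 m)$ for critical Galton--Watson trees with finite variance $\sigma^2$.

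With this improved recursion in hand, closing the induction $q_m\leq c/m$ for a sufficiently large constant $c=c(\xi)$ reduces to verifying that the quadratic correction dominates the (positive) linear discrepancy uniformly in $m$. The main technical obstacle I anticipate is precisely this uniform control, since the coefficient structure of the quadratic form depends on the full tail of $\xi$; once it is achieved, the inductive bound $q_m\leq c/m$ follows for all $m\geq 2$.
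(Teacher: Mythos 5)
Your setup is correct (the cycle distances $c_i=\min(i,D+1-i)$, the distributional recursion for $\height(\Loop(\tree))$, and the observation that the naive union bound gives a linear kernel of total mass $\geq 1$ and hence does not close), but the proof stops exactly at the hard step. You explicitly defer the ``uniform control'' of the quadratic correction, and that deferred step is not a routine verification: it is the entire content of the lemma. More seriously, the mechanism you propose to supply it --- a second-order Taylor expansion producing a ``strictly negative quadratic form,'' in parallel with Kolmogorov's $\P(\height(\tree)\geq m)\sim 2/(\sigma^2 m)$ --- is tied to finite variance. The lemma must hold for every critical $\xi$, including those in the domain of attraction of an $\alpha$-stable law with $\alpha<2$, which is the main case of interest in the paper. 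There $f(1-q)-(1-q)\asymp q^{\alpha}L(1/q)$ rather than $\tfrac{\sigma^2}{2}q^2$, the natural coefficients of your quadratic form (sums of $\pi_d\binom{d}{2}$) diverge, and the damping available from the nonlinearity is of order $q^{\alpha}$, not $q^{2}$. An induction $q_m\leq c/m$ then has to beat a positive linear excess using a correction of order $m^{-\alpha}L(m)$, and it is not clear (and you do not show) that the bookkeeping closes; in the heavy-tailed regime the $1/m$ tail of the looptree height does not come from Kolmogorov-type damping at all, but from the event that a single large loop is encountered along the spine of the exploration.

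The paper's proof sidesteps the recursion entirely via the Lukasiewicz path: it shows the deterministic inequality $\height(\Loop(\tree))\leq \height(\tree)+\max_{j}W_j(\tree)$, where $W$ is the depth-first walk with increments $\xi_i-1$. The term $\P(\height(\tree)\geq m/2)\leq c'/m$ is the classical bound for critical Galton--Watson trees, and $\P(\max W\geq m/2)$ is a gambler's-ruin probability for a mean-zero walk that is skip-free to the left, bounded by $1/(m/2+1)$ by Wald's identity and Markov's inequality --- an argument that uses nothing beyond $\E(\xi)=1$. If you want to salvage your route, you would need either to carry out the renewal-type analysis of your exact recursion uniformly over all critical offspring laws, or to recognize that the accumulated offsets $\sum c_i$ along a ray are exactly the quantity controlled by the Lukasiewicz maximum and switch to that argument. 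As it stands, the proposal is an incomplete program rather than a proof.
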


Before proving this we state some general facts about (plane) trees
and their associated looptrees.
Let $\tree$ be a finite tree 
with $n$ vertices and let
$\root=u_0,u_1,u_2,\dotsc,u_{n-1}$ denote the vertices
of $\tree$ in lexicographical (depth-first-search)
order.  We define the \emph{Lukasiewicz path}
$W(\tree)=(W_j(\tree):0\leq j\leq n)$ of $\tree$ by
\begin{equation}\label{L-def}
W_0(\tree)=0,\quad 
W_{j+1}(\tree)=W_j(\tree)+\out(u_j)-1 \mbox{ for }0\leq j\leq n-1.
\end{equation}
When $\tree$ is clear from the context we simply write
$W$ and $W_j$ in place of $W(\tree)$ and $W_j(\tree)$.
We have that $W_n=-1$ and that $W_j\geq 0$
for $j\leq n-1$;  also, $W_{j+1}\geq W_j-1$ for
all $0\leq j\leq n-1$.

Note that $u_1$ is the leftmost child of the root, and that
the process $(W_j:1 \leq j < n)$ achieves a record minimum at step $j$ if
and only if $u_j$ is a child of the root.  Introducing the
number 
$M_j=\min_{1\leq i\leq j} W_i$, for $1\leq j < n$,
and  denoting the children of $\root$ by $v_1,\dotsc,v_m$,
numbered from left to right, it follows that
\begin{equation}\label{C-succ}
u_j\succeq v_k \Leftrightarrow M_j=m-k.
\end{equation}
Fix an arbitrary vertex $u_j$ in $\tree$
and for $u_i\prec u_j$ write
\begin{equation}
x(i,j)=\min_{i+1\leq \ell\leq j} W_\ell(\tree)-W_i(\tree)+1.
\end{equation}
By applying~\eqref{C-succ} iteratively to the subtrees rooted at the
vertices on the unique path from $\root$ to $u_j$ in $\tree$, we see
that 
\begin{equation}\label{loop-xij}
d_{\Loop(\tree)}(\root,u_j)\leq \sum_{u_i\prec u_j} x(i,j).
\end{equation}
(The right-hand-side of~\eqref{loop-xij} is the length of
a path in $\Loop(\tree)$
from $\root$ to $u_j$  which traverses
each `loop' in the anticlockwise direction;  
cf.\ the more general exact 
expression for $d_{\Loop(\tree)}$
in~\cite[Eq.~(40)]{curien:2013a}).

As observed in \cite[Eq.~(46)]{curien:2013a}
we have for any fixed $j\geq 1$ that
\begin{equation}\label{xij_sum}
\sum_{u_i\prec u_j} x(i,j)= d_\tree(\root,u_j) + W_j(\tree).
\end{equation}
Combining~\eqref{xij_sum} with~\eqref{loop-xij} it follows that 
\begin{equation}\label{eq:height}
\height(\Loop(\tree))\leq 
\height(\tree) + \max_{0\leq j\leq n} W_j(\tree).
\end{equation}

\begin{proof}[Proof of Lemma~\ref{height_lem}]
By~\eqref{eq:height}
\begin{equation}
\P(\height(\Loop(\tree))\geq m)\leq \P(\height(\tree)\geq m/2)
+\P(\max W(\tree)\geq m/2).
\end{equation}
Writing $\xi_i=\out(u_i)$ for the summands appearing
in~\eqref{L-def}, the $\xi_i$ are simply independent copies 
of $\xi$ and 
$W_j=\sum_{i=1}^j (\xi_i-1)$ is a random walk.
The event 
$\{\max W(\tree)\geq m/2\}$ equals the event that the random walk
$W_j$ reaches level $\geq m/2$ before it reaches level
$-1$.  Letting 
\[
T=\min\{j\geq0: W_j \leq -1 \mbox{ or } W_j\geq m/2\},
\]
it follows from Wald's identity (see e.g.~\cite[page 601]{feller:vol2}) that
$\E(W_T)=\E(T)\E(\xi-1)=0$.
Since $W_T+1\geq0$ we have by Markov's inequality
that
\begin{equation}\label{eq:kol2}
\P(\max W(\tree)\geq m/2)=
\P(W_T\geq m/2)\leq
\frac{\E(W_T)+1}{m/2+1}=\frac{1}{m/2+1}.
\end{equation}
Moreover, it follows 
from~\cite[Theorem~I.9.1]{athreya-ney}
 (and the remark below it)
that
\[
\P(\height(\tree)\geq m/2)\leq \frac{c'}{m},
\]
for a constant $c'>0$.  The result follows.
\end{proof}

\subsection{A bound on the volume of a random loopspine} \label{ss:loopspine}

As in the description of $\sgloop$ in Section~\ref{local_sec},
let $(C_i)_{i\geq 1}$ be a sequence of cycles, each having a
marked vertex $x_i$.  Assume that 
$C_i$ has a random length $X_i+1$, where the $X_i$ are
independent and  distributed
as a random variable $X$ taking values in the positive integers
$\{1,2,\ldots\}$.  Construct an infinite chain $\mathcal{C}$
by identifying the point $x_{i+1}$ in cycle $C_{i+1}$ to a uniformly
chosen point in $C_i\setminus\{x_i\}$ for all $i\geq 1$.
We call $x_i$ the root of the cycle $C_i$, and let 
$\root=x_1$ be the root of $\cC$.
The loopspine of $\sgloop$ is constructed like this with 
$X=\offhat$, but the following bound holds
regardless of the distribution of $X$
(as long as $X\geq1$ a.s.).  Define the set
\begin {equation} \label{An_def}
A_n = \{v \in \mathcal{C} ~:~ d_\mathcal{C}(\root,v)\leq n\}
\end {equation} 
where $d_\mathcal{C}$ denotes the graph metric in
$\mathcal{C}$. 

\begin {lemma} \label{l:loopspinevol}
For all $n\geq 0$
\begin {equation}
 \E(|A_n|) \leq 16 n +1.
\end {equation}
\end {lemma}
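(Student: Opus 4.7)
The plan is to bound $|A_n|$ by summing the contributions of the individual cycles $C_i$, using that $\cC$ is a linear chain of cycles glued at single vertices. With $D_i := \sum_{j=1}^{i-1} Y_j$ denoting the graph distance from $\root$ to $x_i$ (where $Y_j := d_{C_j}(x_j, x_{j+1})$) and $L_i := \lfloor (X_i + 1)/2 \rfloor$ the radius of $C_i$ about $x_i$, every vertex of $\cC \setminus \{\root\}$ lies in a unique $C_i \setminus \{x_i\}$ and satisfies $d_{\cC}(\root, v) = D_i + d_{C_i}(x_i, v)$. Since a cycle of length $X_i+1$ contains at most $\min(2r, X_i)$ vertices other than $x_i$ within $C_i$-distance $r$ of $x_i$, and $X_i \leq 2 L_i$, summing over cycles would yield
\[
|A_n| \leq 1 + 2 \sum_{i \geq 1} \min(L_i, n - D_i) \mathbf{1}_{\{D_i \leq n\}}.
\]

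The main obstacle is to control this random sum, and the key point is that $(X_i, Y_i, L_i)$ depends only on the data of cycle $C_i$, whereas $D_i$ depends only on $C_1, \ldots, C_{i-1}$; in particular $(L_i, Y_i)$ is independent of $D_i$. Using the explicit distribution of $Y_i$ given $X_i$ (mass $2/X_i$ at each level of $\{1, \ldots, L_i\}$, except possibly mass $1/X_i$ at the antipodal level when $X_i$ is odd), I would verify by a short case split on $m \geq L_i$ versus $m < L_i$ the pointwise comparison
\[
\min(L_i, m) \leq 2\, \E\bigl[\min(Y_i, m) \bigm| X_i\bigr], \qquad m \geq 0,
\]
which essentially comes down to $\E[Y_i \mid X_i] \geq L_i/2$ and $\E[\min(Y_i, m) \mid X_i] \geq m/2$ for $m < L_i$.

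Taking expectations and using the independence above then gives
\[
\E\Bigl[\sum_i \min(L_i, n - D_i) \mathbf{1}_{\{D_i \leq n\}}\Bigr] \leq 2\, \E\Bigl[\sum_i \min(Y_i, n - D_i) \mathbf{1}_{\{D_i \leq n\}}\Bigr],
\]
and the right-hand sum is in fact equal to $n$ \emph{almost surely}: letting $T := \max\{i : D_i \leq n\}$ (which is finite since $Y_j \geq 1$), for $i < T$ one has $D_{i+1} \leq n$ so $\min(Y_i, n - D_i) = Y_i$, while for $i = T$ the summand is $n - D_T$, so the sum telescopes to $D_T + (n - D_T) = n$. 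Combining, $\E[|A_n|] \leq 1 + 4n \leq 16n + 1$.
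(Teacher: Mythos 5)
Your proof is correct, and it reaches the conclusion (with the better constant $4n+1$) by a route that differs in its key steps from the paper's. Both arguments decompose $|A_n|$ cycle by cycle and compare each cycle's volume contribution to the progress it makes along the chain, but the paper does this by truncating $X_i$ at $2n$ and $Y_i$ at $n$, introducing the stopping time $T_n$, applying Wald's identity twice to get $\E(|A_n|)\leq 2n\,\E(X_1^{(n)})/\E(Y_1^{(n)})+1$, and then bounding the ratio of \emph{unconditional} means by $8$ via an $\eps$-indicator argument optimized at $\eps=1/4$. You instead prove a \emph{conditional} comparison $\min(L_i,m)\leq 2\,\E[\min(Y_i,m)\mid X_i]$ (which checks out: $\E[Y_i\mid X_i]\geq L_i/2$ in both parity cases, and $\E[\min(Y_i,m)\mid X_i]=\sum_{j\le m}\P(Y_i\geq j\mid X_i)\geq m(1-(m-1)/X_i)>m/2$ for $m<L_i$), and you replace both applications of Wald by the exact pathwise identity $\sum_i\min(Y_i,n-D_i)\mathbf{1}_{\{D_i\leq n\}}=n$, which is a clean telescoping observation. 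The independence you invoke — $(L_i,Y_i)$ being a function of $(X_i,U_i)$ and $D_i$ a function of the earlier cycles — is exactly what justifies substituting $m=n-D_i$ after taking expectations, and the interchange of sum and expectation is fine by nonnegativity. What your version buys is a sharper constant and the elimination of the stopping-time/Wald machinery; what the paper's version buys is that the same truncation-and-Wald template is reused elsewhere (e.g.\ in Lemma~\ref{l:hausdorff}), so the two bounds come out of one computation.
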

\begin {proof}
Given $(X_i)_{i\geq 1}$, let $(U_i(X_i))_{i\geq1}$ denote independent
random variables, $U_i(X_i)$ being uniformly chosen from 
the set $\{1,2,\ldots,X_i\}$.  Define
\begin {equation}\label{Yn_def}
 Y_i^{(n)} = \min\{U_i(X_i), X_i-U_i(X_i)+1,n\}.
\end {equation}
Letting
\begin {equation}
 T_n = \min\Big\{N ~:~ \sum_{i=1}^N Y_i^{(n)} \geq n\Big\},
\end {equation}
we see that $T_n$ is the smallest number $N$ for which the root of a 
cycle $C_{N+1}$ is at a distance at least $n$ from the root $\root$. 
Writing $X_i^{(n)} = \min\{X_i,2n\}$ we have that
\begin {equation}
 |A_n| \leq \sum_{i=1}^{T_n} X_i^{(n)}+1.
\end {equation}
Using this along with Wald's identity we get
\begin {equation}
 \E(|A_n|) \leq \E(T_n)\E(X_1^{(n)})+1.
\end {equation}
Again, by Wald's identity and the definitions of $T_n$ and $Y_i^{(n)}$ we find that
\begin {equation}
 \E(T_n) = \frac{\E\left(\sum_{i=1}^{T_n}  Y_i^{(n)}\right)}{\E(Y_1^{(n)})} = 
\frac{\E\left(\sum_{i=1}^{T_n-1} Y_i^{(n)}+  Y_{T_n}^{(n)}\right)}{\E(Y_1^{(n)})} 
\leq \frac{2n}{\E(Y_1^{(n)})}
\end {equation}
and thus
\begin {equation}\label{exp_ratio}
  \E(|A_n|)  \leq  2n \frac{\E(X_1^{(n)})}{\E(Y_1^{(n)})}+1.
\end {equation}
 We conclude by showing that the ratio 
of expected values in~\eqref{exp_ratio} is bounded from the above by 8. 
Fix $\epsilon \in(0, 1/2)$ and note that
\begin {eqnarray*}
 Y_1^{(n)} &\geq& Y_1^{(n)} \indic{\epsilon X_1 \leq U_1(X_1) \leq (1-\epsilon) X_1+1} \\
  &\geq& \epsilon  \min\{X_1, n/\epsilon\} \indic{\epsilon X_1 \leq U_1(X_1) \leq (1-\epsilon) X_1+1}
\end {eqnarray*}
and thus
\begin {eqnarray} \nonumber
 \E(Y_1^{(n)}) &\geq& \epsilon \E(\min\{X_1, n/\epsilon\} \indic{\epsilon X_1 \leq U_1(X_1) \leq (1-\epsilon) X_1+1}) \\ \nonumber
 &=& \epsilon \E\big(\min\{X_1, n/\epsilon\} \P(\epsilon X_1 \leq U_1(X_1) \leq (1-\epsilon) X_1+1~|~X_1)\big) \\ \nonumber
 &=& \epsilon \E\Big(\min\{X_1,n/\epsilon\} \frac{\lfloor (1-\epsilon)X_1\rfloor  - \lceil \epsilon X_1\rceil +2}{X_1}\Big) \\
 &\geq& \epsilon(1-2\epsilon)\E(\min\{X_1,n/\epsilon\}).
\end {eqnarray}
Also, $\E(X_1^{(n)}) \leq \E(\min\{X_1,n/\epsilon\})$
since $\epsilon < 1/2$.  Taking the optimal $\epsilon=1/4$ yields the desired result.
\end {proof}
We conclude this section by establishing a lower bound on the number
of outgrowths (possibly empty) from the loopspine $\mathcal{C}$ up to
and including distance $n$ from the root. We use the same notation as
was introduced in the beginning of this section. We will call the
vertices $\root=x_1,x_2,\ldots,$ \emph{closed} and the vertices in
$\mathcal{C}\setminus\{x_1,x_2,\ldots\}$ \emph{open}. By definition,
outgrowths only emanate from the open vertices and thus we want a
lower bound on the number of open vertices up to and including
distance $n$ from the root. Denote their number by $\open_n$. For each
$i\geq 0$, define $Y_i = \min\{U_i(X_i),X_i-U_i(X_i)+1\}$ with the
same $U_i(X_i)$ as in \eqref{Yn_def}.
\begin {lemma} \label{l:openbound}
Let $p = \P(Y_i > 1)$. Then $\open_n$ stochastically dominates a
binomial random variable $\mathrm{Bin}(\lfloor n/2\rfloor,p)$.
\end {lemma}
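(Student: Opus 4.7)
The plan is to establish the pointwise inequality $\open_n \ge J$ almost surely, where $J := \sum_{i=1}^{\lfloor n/2\rfloor} \mathbf{1}_{Y_i > 1}$. Since the $Y_i$ are i.i.d.\ with $\P(Y_i > 1) = p$, the sum $J$ has distribution $\mathrm{Bin}(\lfloor n/2 \rfloor, p)$ exactly, so an almost sure inequality immediately yields the stated stochastic domination. Writing $D_i := \sum_{j=1}^{i-1} Y_j$, I first observe that $D_i$ is precisely the graph distance in $\mathcal{C}$ from $\root$ to $x_i$: the chain $\mathcal{C}$ is obtained by gluing the cycles at the single articulation points $x_2, x_3, \ldots$, so every path from $\root = x_1$ to $x_i$ must traverse these in order, and within each $C_j$ the distance from $x_j$ to $x_{j+1}$ is exactly $Y_j$.

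The key geometric observation is that whenever $Y_i > 1$, the short arc of $C_i$ from $x_i$ to $x_{i+1}$ carries $Y_i - 1 \ge 1$ interior vertices, all open (as only $x_i$ and $x_{i+1}$ are closed in $C_i$), which sit at distances $D_i + 1, D_i + 2, \ldots, D_i + Y_i - 1$ from $\root$: the direct route along the short arc is strictly shorter than any detour through $x_{i+1}$ or along the long arc. Hence $C_i$ contributes at least $\min\bigl(Y_i - 1,\,(n - D_i)_+\bigr)$ open vertices to $\open_n$, and since distinct cycles share only closed vertices, summing these lower bounds in $i$ gives a lower bound on $\open_n$.

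The proof then splits into two cases. If $D_{\lfloor n/2\rfloor + 1} \le n$, then $D_i \le n - 1$ for every $i \le \lfloor n/2\rfloor$ (using $Y_i \ge 1$), and each such $i$ with $Y_i > 1$ contributes at least the open vertex adjacent to $x_i$ on the short arc, which lies at distance $D_i + 1 \le n$; this gives $\open_n \ge J$ directly. Otherwise set $\sigma := \min\{i : D_{i+1} > n\}$, so that $\sigma \le \lfloor n/2\rfloor$ and $D_\sigma \le n$. For $i < \sigma$ the entire short arc of $C_i$ lies inside $A_n$ and contributes $Y_i - 1$ open vertices, while $C_\sigma$ contributes $\min(Y_\sigma - 1,\, n - D_\sigma)$. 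Combining the telescoping identity $\sum_{i < \sigma}(Y_i - 1) = D_\sigma - (\sigma - 1)$ with a case split on whether $D_{\sigma + 1} = n + 1$ or $D_{\sigma + 1} > n + 1$, the total count is exactly $n + 1 - \sigma$ in either subcase. Since $\sigma \le \lfloor n/2\rfloor$, this is at least $\lceil n/2\rceil + 1 > \lfloor n/2\rfloor \ge J$, so $\open_n \ge J$ here as well.

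The main delicate step is the bookkeeping in the second case: irrespective of the overshoot $D_{\sigma + 1} - n$, the partial contribution of the boundary-crossing cycle $C_\sigma$ must combine with the fully-contained short arcs of $C_1, \ldots, C_{\sigma - 1}$ to give the clean count $n + 1 - \sigma$. Once this identity is verified, the rest of the argument is essentially arithmetic on $\lfloor\cdot\rfloor$ and $\lceil\cdot\rceil$.
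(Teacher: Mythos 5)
Your proof is correct, and it reaches the conclusion by a somewhat different route than the paper. The paper does not attempt a pointwise bound: it introduces $K$, the index of the first cycle to contain a vertex at distance $n$ from $\root$, observes that a geodesic out to level $n$ yields $\open_n\geq n-K$, and then bounds the tail $\P(K\leq k)\geq\P(Y_1+\dotsb+Y_k\geq n)\geq\P(\eps_1+\dotsb+\eps_k\geq n-k)$ with $\eps_i=\indic{Y_i>1}$, comparing distribution functions at each level $r\leq\floor{n/2}$. You instead establish the almost sure inequality $\open_n\geq\sum_{i=1}^{\floor{n/2}}\indic{Y_i>1}$ on the original probability space, via the case split on whether $D_{\floor{n/2}+1}\leq n$ (injecting each index with $Y_i>1$ into the open vertex at distance $D_i+1\leq n$ on the short arc of $C_i$) or not (in which case the exact count $n+1-\sigma\geq\lceil n/2\rceil+1$ of open short-arc vertices already exceeds $\floor{n/2}$). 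The underlying combinatorial content is the same in both arguments — each cycle with $Y_i>1$ donates at least one open vertex before level $n$ is reached, and the $\indic{Y_i>1}$ are i.i.d.\ Bernoulli$(p)$ — but your version produces a genuine coupling rather than a comparison of tails, which is formally stronger and arguably cleaner; the price is the extra bookkeeping in your second case, which I have checked and which does close correctly (the identity $\sum_{i<\sigma}(Y_i-1)+\min(Y_\sigma-1,n-D_\sigma)=n+1-\sigma$ holds in both subcases). Both proofs use the same facts about the geometry of $\cC$, namely that $x_i$ is a cut vertex so $d_\cC(\root,w)=D_i+d_{C_i}(x_i,w)$ for $w\in C_i$, and that the interior of the short arc consists of open vertices.
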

\begin {proof}
Let $K$ be the index of the first of the cycles $C_1,C_2,\dotsc$
to reach distance $n$ from the root.  Then we have that 
$\mathcal{R}_n\geq n-K$.
For all $k\geq1$ we have that
 $\{K\leq k\}\supseteq\{Y_1+\dotsb+Y_k\geq n\}$,
and thus 
\begin{equation}\begin{split}
\P(K\leq k) &\geq \P(Y_1+\dotsb+Y_k\geq n)\\
&= \P((Y_1-1)+\dotsb+(Y_k-1)\geq n-k)\\
&\geq \P(\eps_1+\dotsb+\eps_k\geq n-k),
\end{split}\end{equation}
where $\eps_i=\indic{Y_i>1}$.  Thus for 
each $0\leq r\leq \lfloor n/2\rfloor$ we have that
\begin{equation}\begin{split}
\P(\mathcal{R}_n\geq r)&\geq\P(K\leq n-r)\\
&\geq 
\P(\eps_1+\dotsb+\eps_{n-r}\geq r)\\
&\geq 
\P(\eps_1+\dotsb+\eps_{\lfloor n/2\rfloor}\geq r).
\end{split}\end{equation}
This gives the result since $\eps_1+\dotsb+\eps_{\lfloor n/2\rfloor}$
has the law Bin($\lfloor n/2\rfloor,p$).
\end {proof}

\section{Random walk and spectral dimension}
\label{spec_sec}

In this section we prove our main results on random walk on
the infinite random looptree $\sgloop$. 
Given a realization of $\sgloop$, let $(X_n:n\geq0)$ 
denote simple random
walk on $\sgloop$ started at $\root$.  Apart from the
return probability 
$p_{2n}^\sgloop(\root,\root)=\P(X_{2n}=\root\mid\sgloop)$ 
we will also consider the escape time from a ball of radius $R$ defined by
$\tau_R(\sgloop):=\min\{n\geq0:X_n\not\in B(R;\sgloop)\}$
and the (quenched) expected escape time
$T_R(\sgloop)=\E(\tau_R\mid\sgloop)$.

We will focus on the case when the
offspring distribution $\off$ (defined above~\eqref{hat_xi_eq}) is
critical (i.e.\! $\mu = \E(\off)=1$) and is in the domain of attraction
of a stable law with index $\alpha\in(1,2]$
(i.e.\! satisfies~\eqref{stableintro}). We note briefly that in
the sub-critical case $\mu<1$ the looptree is a bi-infinite path
with small outgrowths and a random walker perceives it as
one-dimensional:  one may easily show that when $\mu=0$ or $0<\mu<1$
and $\E(\xi^{1+\epsilon})<\infty$ for some $\epsilon>0$ then almost
surely $d_\mathrm{s}(\sgloop)=1$. From now on we thus consider the
critical case only. 

Denote by $|\tree|$ the total number
of individuals in
the Galton--Watson tree $\tree$.  We begin by summarizing
some facts about the tail probabilities and
generating functions of $\off$ and $|\tree|$
which follow from the assumption that $\off$ 
is in the domain of attraction of a stable 
law with index $\alpha \in (1,2]$.
 Recall that a function $L$ is said to vary slowly at infinity if it is measurable and for any $\lambda \in \mathbb{R}$ it holds that
\begin {equation}
 \lim_{x\rightarrow\infty} \frac{L(\lambda x)}{L(x)} = 1.
\end {equation}
Common examples of slowly varying functions are powers of logarithms and iterates of logarithms.
By~\cite[Theorem~XVII.5.2]{feller:vol2} 
one may write
\begin {equation}\label{truncvar}
 \E(\off^2\indic{\off\leq x}) =  x^{2-\alpha} L_1(x)
\end {equation}
where $L_1$ is slowly varying at infinity,
cf.~\eqref{stableintro}.
When $\alpha<2$ we then have 
\begin {equation}\label{stabletail}
\P(\off > x) \sim \frac{2-\alpha}{\alpha}x^{-\alpha}L_1(x).
\end {equation}
(See~\cite[Corollary~XVII.5.2 and~(5.16)]{feller:vol2}.)
We will denote the generating function 
of the offspring probabilities by
\begin {equation}\label{genf}
 f(s) = \E(s^\off) = \sum_{n=0}^\infty \pi_n s^n.
 \end {equation}
It satisfies
\begin {equation} \label{frel}
 f(s) = s+(1-s)^\alpha L((1-s)^{-1})\qquad s\in[0,1)
\end {equation}
where 
\begin {equation}\label{q1}
 {L}(x) \sim \frac{\Gamma(3-\alpha)}{\alpha(\alpha-1)} 
L_1(x)-\frac{1}{2}x^{\alpha-2}\mbox{ as } x\rightarrow \infty.
\end {equation}
This is standard but we sketch a proof in 
Lemma~\ref{taubalpha2} in the Appendix.
In the case $\alpha=2$ one sees 
immediately from \eqref{truncvar} that either $\off$ has finite variance and
\begin {equation}\label{q2}
 L_1(x) \rightarrow 1+f''(1) 
\qquad \text{as}~ x\rightarrow \infty.
\end {equation}
or $L_1(x)$ diverges as $x\rightarrow \infty$.

Next,  we have by Lemma \ref{l:sizeGW} 
of the Appendix that 
\begin {equation}\label{prog}
  \E(s^{|\tau|}) = 1-(1-s)^{1/\alpha}L^\ast((1-s)^{-1}) \qquad s\in[0,1)
 \end {equation}
where $L^\ast$ is slowly varying at infinity and satisfies
\begin{equation}\label{Last}
\lim_{n\rightarrow\infty} L^\ast(n)^\alpha L(n^{\frac{1}{\alpha}}L^\ast(n)^{-1}) = 1.
\end{equation}
Let $a_n$ be a sequence such that
\begin {equation} \label{scaling}
a_n^{-1/\alpha} n L^\ast(a_n)\rightarrow 1 
\quad \text{as}~n\rightarrow \infty.
 \end {equation}
Note that \eqref{Last} and \cite[Theorem 1.5]{seneta} imply that 
\begin {equation} \label{anslowly}
 a_n \sim n^{\alpha} L(n)^{-1}
\end {equation}
and by \cite[$5^\circ$, Section 1.5]{seneta} we may and will choose $a_n$ to be strictly increasing.

\subsection{Proofs of the main results}

To prove our main results Theorems~\ref{specdim_thm}
and~\ref{specdim_thm_annealed}
we will estimate the volume- and resistance
growth in $\sgloop$, and apply the recent 
results of Kumagai and Misumi~\cite{kumagai:2008}.
Recall that if $G=(V,E)$ is a locally finite graph with 
vertex set $V$ and edge set $E$, and $A,B\subseteq V$
are disjoint subsets of $V$, then the 
\emph{effective resistance} 
$\Reff(A,B)$ between $A$ and $B$ is defined by
letting $\Reff(A,B)^{-1}$ be the infimum  of
\begin{equation}
\sum_{xy\in E}(h(x)-h(y))^2
\end{equation}
over all functions $h:V\to\RR$ satisfying 
$h(a)=1$ for all $a\in A$ and
$h(b)=0$ for all $b\in B$.  Here 
we will mainly be using  the following two very
simple facts about effective resistances:
\begin{enumerate}
\item for all $x,y\in V$ we have 
$\Reff(x,y)\leq d_G(x,y)$, and
\item if $C\subseteq V$ \emph{separates} $A$ from $B$,
in the sense that every path in $G$ between some 
$a\in A$ and $b\in B$ contains some $c\in C$, then
$\Reff(A,B)\geq \Reff(A,C)$.
\end{enumerate}
We will also use the standard
series and parallel laws,
for which we refer to~\cite[Chapter~2]{lyons-peres}.
Recall the graph ball of radius $n$ defined in \eqref{balldef}.
We define its \emph{volume} $V_n=|B(n; \sgloop)|$
as the number of vertices in it.  

Let $v(n) = a_n$ and $r(n) = n$ and let $\mathcal{I}$ be the inverse
function of $v\cdot r$. 
The functions $v$ and $r$ are both strictly increasing and in light of
\eqref{anslowly} and Lemma \ref{l:powerratio} there exist $1 \leq d_1
< d_2$, $0<\alpha_1,\alpha_2 \leq 1$ and $C_1,C_2\geq 1$ such that
\begin {equation}
 C_1^{-1}\left(\frac{n}{n'}\right)^{d_1}\leq \frac{v(n)}{v(n')}\leq
 C_1\left(\frac{n}{n'}\right)^{d_2}
 ~\text{,}~C_2^{-1}\left(\frac{n}{n'}\right)^{\alpha_1}\leq
 \frac{r(n)}{r(n')}\leq C_2\left(\frac{n}{n'}\right)^{\alpha_2}
\end {equation}
for all $0< n' \leq n <\infty$. Thus, $v$ and $r$ satisfy the basic
conditions required in \cite{kumagai:2008}.
(In~\cite{kumagai:2008} the `volume' is defined slightly differently
than our $V_n$, but since all vertices in the graph $\sgloop$ have
uniformly bounded degrees our definition may be used equivalently.)
We will prove that there exist $c_1,c_2,c_3>0$, $\lambda_0>0$
and $q_1>0$, $q_2>2$ such that for all $\lambda\geq\lambda_0$
and all $n\geq1$ we have
\begin{align} 
&\P(V_n\leq \l v(n), \Reff(\root,B(n;\sgloop)^c)\geq\l^{-1}r(n))
  \geq 1-\frac{c_1}{\l^{q_1}} \label{J1}\\
&\P(V_n\geq \l^{-1}v(n), \forall y \in
B(n;\sgloop), \Reff(\root,y)\leq \l r(d_\sgloop(\root,y))) 
 \geq 1-\frac{c_2}{\l^{q_2}} \label{J2}\\
& \E(\Reff(\root,B(n;\sgloop)^c)V_n)\leq c_3 v(n) r(n) \label{J3}.
\end{align}
Applying Theorem~1.5 and Proposition~1.4 of~\cite{kumagai:2008}
we get the following from~\eqref{J1}--\eqref{J3}:
\begin{theorem}\label{KM_thm}
There exist $\beta_1,\beta_2>0$ such that the following hold.
\begin{enumerate}
\item For each realization of $\sgloop$ there exists a number
$N(\sgloop)<\infty$ such that
\[
\frac{(\log n)^{-\beta_1}}{v(\mathcal{I}(n))}\leq
p_{2n}^\sgloop(\root,\root)\leq
\frac{(\log n)^{\beta_1}}{v(\mathcal{I}(n))},\quad
\mbox{for } n\geq N(\sgloop).
\]
\item For each realization of $\sgloop$ there exists a number
$R(\sgloop)$ such that
\[
(\log R)^{-\beta_2}v(R)r(R)\leq T_R(\sgloop) \leq
(\log R)^{\beta_2}v(R)r(R),\quad
\mbox{for } R\geq R(\sgloop).
\]
\item There exist $C_1,C_1'>0$ such that for all $n\geq 1$
\[
\frac{C_1}{v(\mathcal{I}(n))}\leq
\E(p_{2n}^\sgloop(\root,\root))\leq
\frac{C_1'}{v(\mathcal{I}(n))}.
\]
\item There exist $C_2,C_2'>0$ such that for all $R\geq 1$
\[
C_2v(R)r(R)\leq \E(T_R(\sgloop)) \leq C_2'v(R)r(R).
\]
\end{enumerate}
\end{theorem}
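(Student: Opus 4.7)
The plan is to observe that Theorem~\ref{KM_thm} follows directly from Proposition~1.4 and Theorem~1.5 of \cite{kumagai:2008} once the three estimates \eqref{J1}, \eqref{J2}, \eqref{J3} are established. The functions $v(n)=a_n$ and $r(n)=n$ have already been shown to satisfy the polynomial doubling conditions required by those theorems, so the content of the argument lies entirely in verifying (J1)--(J3). I describe the strategy for each below.

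For \eqref{J1} I would use the loopspine-plus-outgrowths decomposition of $\sgloop$ from Section~\ref{local_sec}. The volume upper bound splits as (i) the number of loopspine vertices within $B(n;\sgloop)$, which has expectation $O(n)$ by Lemma~\ref{l:loopspinevol} and so is $O(\lambda n)$ outside an event of probability $O(1/\lambda)$, and (ii) the total size of the independent outgrowths attached to those spine vertices. Each outgrowth is $\Loop(\tree)$ for a critical Galton--Watson tree, and $|\tree|$ has a stable tail of index $1/\alpha$ by \eqref{prog}. A truncated sum of $\Theta(n)$ such copies is of order $a_n\sim n^\alpha/L(n)$ with a power-law upper tail, giving the required bound. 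For the resistance lower bound I would exploit that each spine root $x_i$ is a cut vertex: by the parallel law the effective resistance across the cycle $C_i$ between $x_i$ and $x_{i+1}$ equals $Y_i(X_i+1-Y_i)/(X_i+1)\geq Y_i/2$, where $Y_i$ is the shorter arc. Summing these contributions up to the first cycle which exits $B(n;\sgloop)$ gives a spine-only resistance of at least $n/2$. Outgrowths can only short the graph to $B(n;\sgloop)^c$ if they themselves reach distance of order $n$; by Lemma~\ref{height_lem} and a union bound over the $\Theta(n)$ attachment points the expected number of such long outgrowths is $O(1)$, and each provides a parallel path of resistance also of order $n$, so the overall effective resistance remains of order $n$.

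For \eqref{J2} the resistance upper bound $\Reff(\root,y)\leq d_\sgloop(\root,y)=r(d_\sgloop(\root,y))$ holds deterministically for every $y$, so the only real content is the volume lower bound $V_n\geq \lambda^{-1} a_n$. Lemma~\ref{l:openbound} guarantees at least $cn$ open spine vertices within loopspine distance $n/2$ of the root with high probability. Each such vertex carries an independent outgrowth, and after truncating outgrowths to height $\leq n/2$ (justified by Lemma~\ref{height_lem}) the sum of the $\Theta(n)$ resulting sizes is of order $a_n$ with a suitable lower tail; this yields the $\lambda^{-q_2}$ bound with $q_2>2$ provided one exploits the finiteness of the second moment of the truncated outgrowth size, which holds thanks to the truncation even though the untruncated $|\tree|$ has infinite mean when $\alpha<2$.

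Estimate \eqref{J3} is the easiest: since $\Reff(\root,B(n;\sgloop)^c)\leq n=r(n)$ deterministically, $\E(\Reff\cdot V_n)\leq n\,\E(V_n)$, and (J3) reduces to showing $\E(V_n)\leq c\,a_n$, which is exactly the annealed Hausdorff estimate recorded in \eqref{hausdim}. This in turn follows by combining Lemma~\ref{l:loopspinevol} with a bound on the expected truncated size of $|\tree|$ obtained from \eqref{prog} by Tauberian arguments. The main obstacle throughout is the fat tail of $|\tree|$: when $\alpha<2$ it has infinite mean, forcing one to work with careful truncations and slowly varying function estimates, and the most delicate point is obtaining the tail exponent $q_2>2$ in (J2), since this requires concentration of sums of $\Theta(n)$ independent heavy-tailed variables around their typical scale $a_n$.
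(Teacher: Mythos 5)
Your overall reduction is the same as the paper's: Theorem~\ref{KM_thm} is read off from Proposition~1.4 and Theorem~1.5 of \cite{kumagai:2008} once \eqref{J1}--\eqref{J3} hold, and your treatment of \eqref{J3} and of the trivial resistance upper bound in \eqref{J2} matches the paper. However, both of the hard estimates have genuine gaps. For the resistance lower bound in \eqref{J1}, your union-bound heuristic (``$\Theta(n)$ attachment points, each long with probability $O(1/n)$, hence $O(1)$ long outgrowths'') misses the dominant mechanism: the cycle lengths along the loopspine are distributed as $\offhat$, which has infinite mean when $\alpha<2$, so a single cycle attached within distance $i$ of the root can contain enormously many attachment points. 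The probability that such a cycle carries an outgrowth of height $\geq n/2$ is $\E\bigl(1-(1-p_n)^{\offhat-1}\bigr)\asymp n^{1-\alpha}L_1(n)$, not $O(1/n)$, and summing over the $\asymp i^{\alpha-1}/L_1(i)$ cycles within distance $i$ gives a short-circuit probability of order $(i/n)^{\alpha-1}$. This is exactly why the paper must construct an explicit separating set $S_n$ (with the case analysis of Fig.~\ref{f:sep}), prove the tail bound \eqref{dn_prob_bound} via the size-biased estimates of Lemmas~\ref{l:bias} and~\ref{l:lowerboundY}, and can only achieve $q_1<\alpha-1$; your sketch would suggest exponent $1$, which is false for $\alpha<2$, and in any case it never produces a quantitative bound on $\P(\Reff<\lambda^{-1}n)$ because ``each parallel path has resistance of order $n$'' is neither justified nor sufficient when the long outgrowth attaches near the root.

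The second gap is in the volume lower bound of \eqref{J2}. A second-moment (Chebyshev/Paley--Zygmund) argument on the truncated outgrowth sizes can at best show that $\sum_i X_i^{(n)}$ exceeds a constant fraction of its mean with probability bounded away from $0$ or $1$; it cannot produce a bound on $\P(V_n<\lambda^{-1}a_n)$ that \emph{decays in} $\lambda$, let alone with exponent $q_2>2$ as Kumagai--Misumi require. The paper instead runs a Laplace-transform/Chernoff argument (proof of Lemma~\ref{exp_inv_vol}): $\{ \sum X_i^{(n)}<\lambda^{-1}a_n\}$ forces \emph{all} of the $\Theta(n)$ independent summands to be small, and combining $\E(e^{-t|\tree|})$ from \eqref{prog} with the height bound of Lemma~\ref{height_lem} gives a stretched-exponential tail $\exp(-c\lambda^{1/\alpha-\delta})$, hence $\E[V_n^{-\gamma}]\leq K_\gamma a_n^{-\gamma}$ for every $\gamma$. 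You would need to replace your second-moment step with an argument of this type. A smaller issue: for \eqref{J3} the quantity you need is $\E(X^{(n)})\leq Ca_n/n$, the expected \emph{ball} volume in $\Loop(\tree)$, which is not simply a truncated moment of $|\tree|$; the paper computes it via the recursion and generating function of Lemma~\ref{exp_vol}.
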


Our results Theorem~\ref{specdim_thm}
and Theorem~\ref{specdim_thm_annealed}
on the spectral dimension
follow immediately from the first and third parts of Theorem~\ref{KM_thm},
respectively, using~\eqref{anslowly} and standard results on slowly varying functions given in the Appendix.
In fact,~\cite{kumagai:2008} shows that some
more results follow from the 
inequalities~\eqref{J1}--\eqref{J3} which we
do not state explicitly here.
The rest of this section will be devoted to
proving~\eqref{J1}--\eqref{J3}.

\subsection{Bounds on the volume}
In this section we provide a number of estimates on 
the volume of the random looptrees under consideration. 
Recall that $a_n=v(n)$.
\begin {lemma}\label{exp_inv_vol}
For any $\gamma > 0$ there is a constant $K_\gamma$ such that
\begin{equation}\label{vol_inv}
\E[V_n^{-\gamma}]\leq K_\gamma a_n^{-\gamma}.
\end{equation}
\end {lemma}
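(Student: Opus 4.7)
The plan is to reduce the inverse-moment bound to a strong lower-tail estimate on $V_n/a_n$ and then integrate. Starting from
$$\E[V_n^{-\gamma}] = \gamma \int_1^\infty s^{-\gamma-1} \P(V_n<s)\,ds$$
(valid since $V_n\ge 1$), one splits at $s=a_n$: the piece $s>a_n$ contributes at most $a_n^{-\gamma}$ trivially, and the piece $s\le a_n$ becomes $a_n^{-\gamma}\gamma\int_{1/a_n}^1 t^{-\gamma-1}\P(V_n\le t a_n)\,dt$ after the substitution $t=s/a_n$. Thus it suffices to establish, uniformly in $n$, a tail bound of the shape
\begin{equation}\label{eq:pplan_tail}
\P(V_n\le t a_n)\le K e^{-c\,t^{-\kappa}}+K e^{-cn},\qquad t\in(0,1],
\end{equation}
for some constants $K, c, \kappa>0$, since $\int_0^1 t^{-\gamma-1} e^{-c t^{-\kappa}}\,dt<\infty$ for every $\gamma>0$ and $e^{-cn}$ is negligible compared with $a_n^{-\gamma}\sim n^{-\alpha\gamma}L(n)^\gamma$.

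For the lower bound on $V_n$ I would use the loopspine structure from Section \ref{local_sec}. Let $M:=\open_{n/2}$ count the open vertices on the loopspine within distance $n/2$ of $\root$, and let $\tau_1,\tau_2,\ldots$ denote the i.i.d.\ critical Galton--Watson outgrowths rooted at those vertices. Whenever $\height(\Loop(\tau_i))\le n/2$, the whole looptree $\Loop(\tau_i)$ is contained in $B(n;\sgloop)$, so
$$V_n\ge \max_{1\le i\le M}|\tau_i|\,\indic{\height(\Loop(\tau_i))\le n/2}.$$
By Lemma \ref{l:openbound} and a standard Chernoff bound for binomials, $\P(M\le c_1 n)\le e^{-c_2 n}$, which already accounts for the second term in \eqref{eq:pplan_tail}.

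The heart of the argument is the one-outgrowth tail estimate. From \eqref{prog} and a Tauberian theorem one obtains $\P(|\tau|\ge m)\sim c_\alpha m^{-1/\alpha}L^*(m)$; combining this with the scaling relation \eqref{scaling} and Potter-type slow-variation bounds yields
$$\P(|\tau|\ge a_n/k)\ge \frac{c\, k^{1/\alpha-\delta}}{n}$$
for any small $\delta>0$, uniformly for $k$ in a suitable range (a small loss $\delta$ in the exponent appears to absorb the slow-variation correction). The height indicator costs at most $O(1/n)$ by Lemma \ref{height_lem}, so on the event $\{M\ge c_1 n\}$ the independence of the outgrowths gives
$$\P(V_n\le a_n/k)\le \Bigl(1-\frac{c'k^{1/\alpha-\delta}}{n}\Bigr)^{c_1 n}+e^{-c_2 n}\le e^{-c''k^{1/\alpha-\delta}}+e^{-c_2 n}.$$
Setting $t=1/k$ produces \eqref{eq:pplan_tail} with $\kappa=1/\alpha-\delta$; ranges of $t$ for which $t a_n$ is of constant order are covered by the fact that the loopspine alone forces $V_n\ge c_1 n$ with probability $1-e^{-c_2 n}$, and finitely many small values of $n$ can be absorbed into the constant $K_\gamma$.

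The main technical obstacle is the uniform control of the slow-variation factor $L^*(a_n/k)/L^*(a_n)$ as $k$ varies; Potter bounds deal with this at the cost of the $\delta$-perturbation in the exponent, which is harmless since any $\kappa>0$ suffices for the integrability.
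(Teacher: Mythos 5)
Your proposal is correct and shares the paper's overall architecture --- establish a stretched-exponential lower tail for $V_n/a_n$ of the form $\exp(-c\lambda^{1/\alpha-\delta})$ on a suitable range and then integrate --- but the mechanism for the tail bound is genuinely different. The paper lower-bounds $V_n$ by the \emph{sum} $\sum_{i=1}^{\open_{\lfloor n/2\rfloor}}X_i^{(\lfloor n/2\rfloor)}$ of truncated outgrowth volumes and controls its lower tail via the Laplace transform, bounding $\E[e^{-tX^{(n)}}]$ by $\E(e^{-t|\tree|})+\P(\height(\Loop(\tree))>n)$ and feeding in the generating-function identity \eqref{prog} directly; you instead lower-bound $V_n$ by the \emph{maximum} of the outgrowth sizes among those of height at most $n/2$, which requires the pointwise tail asymptotics $\P(|\tree|\geq m)\sim c_\alpha m^{-1/\alpha}L^*(m)$ (obtainable from \eqref{prog} via Theorem~\ref{thm:tauberian} with $\rho=1-1/\alpha>0$, since $\alpha>1$) together with Potter bounds from Lemma~\ref{l:powerratio}. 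Your route trades the Laplace-transform computation for a slightly more probabilistic ``at least one large outgrowth'' argument; both need the same inputs (Lemma~\ref{l:openbound}, Lemma~\ref{height_lem}, the scaling relation \eqref{scaling}) and the same restriction $k\lesssim a_n$ on the range of scales, and both lose the same harmless $\delta$ in the exponent. The final integration step also differs cosmetically: you use the continuous identity $\E[V_n^{-\gamma}]=\gamma\int_1^\infty s^{-\gamma-1}\P(V_n<s)\,ds$ where the paper sums over the discrete scales $a_k/a_n$; these are equivalent. One small point to make explicit: the subtraction $\P(|\tree|\geq a_n/k)-\P(\height(\Loop(\tree))>n/2)$ is only useful for $k$ above a fixed constant $k_0$, so the regime of bounded $k$ must be dispatched trivially by enlarging the constant $K$ in the tail bound (and the off-by-one coming from the strict inequality in the definition of $B(n;\sgloop)$ should be absorbed by working with height at most $n/2-1$); neither affects the validity of the argument.
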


\begin {proof}
We will start by
establishing the following:
for every $\delta \in(0,1/\alpha)$ 
there are constants $c_1,c_2,c_3,c_4>0$,
possibly depending on $\delta$, such that
\begin{equation}\label{stretched}
\P(V_n<\lambda^{-1} a_n)\leq
c_1 \exp(-c_2 \lambda^{1/\alpha-\delta})
\quad\mbox{ whenever }
c_3\leq \lambda\leq c_4 a_n.
\end{equation}
We let $X^{(n)}$ denote the number of vertices
of $\Loop(\tree)$ at distance at most $n$
from the root of $\Loop(\tree)$, and we let
$X^{(n)}_1,X^{(n)}_2,\dotsc$ denote independent
copies of $X^{(n)}$.  Note that $V_n$
dominates a sum $\sum_{i=1}^{\open_{\lfloor n/2\rfloor}}X^{(\lfloor n/2\rfloor)}_i$ with $\open_n$ from Lemma \ref{l:openbound}.
Thus it suffices to show that there are 
constants $c'_1,c'_2,c'_3,c'_4>0$ such that
\begin{equation}\label{stretched2}
\P\Big(\sum_{i=1}^{\open_n} X^{(n)}_i<
\lambda^{-1} a_n\Big)\leq
c_1'\exp(-c_2'\lambda^{1/\alpha-\delta})
\end{equation}
whenever $c_3'\leq \lambda\leq c_4'a_n$. Note that by Lemma \ref{l:openbound} it holds that for $x \in [0,1]$
\begin {equation} \label{eq:open}
 \E(x^{\open_n}) \leq (1-p(1-x))^{\lfloor n/2\rfloor}
\end {equation}
and $p>0$. Let $t=\lambda a_n^{-1}$. Using Markov's inequality, the independence of the $X_i^{(n)}$'s and $\open_n$, Eq.~\eqref{eq:open}, Lemma \ref{height_lem} and Eq.~\eqref{prog}
we have for any $t>0$ that
\begin{equation}
\begin{split}
\P\Big(\sum_{i=1}^{\open_n} X_i^{(n)}<\lambda^{-1} a_n\Big)&=
\P\Big(\exp\Big(-t\sum_{i=1}^{\open_n} X_i^{(n)}\Big)>
e^{-1}\Big)\\
&\leq e
\E\Big(\E\big[\exp\big(-t X^{(n)}\big)\big]^{\open_n}\Big)\\
&\leq e
\Big(1-p\big(1-\E\big[\exp\big(-t X^{(n)}\big)\big]\big)\Big)^{\lfloor n/2\rfloor}\\
&\leq e
\Big(1-p\big(1-\big[\E(e^{-t|\tree|})+\P(\height(\Loop(\tree))>n)\big]\big)\Big)^{\lfloor n/2\rfloor}\\
&\leq e
\Big(1-p\big((1-e^{-t})^{1/\alpha} L^\ast((1-e^{-t})^{-1})-c/n\big)\Big)^{\lfloor n/2\rfloor}.
\end{split}
\end{equation}
Recall that 
$\lambda\leq c_4'a_n$ for some constant $c_4'>0$.
By Lemma~\ref{l:powerslow}
there is a slowly varying function $\tilde L$
asymptotically equal to $L^\ast$ such that
$x^{-1/\alpha}\tilde L(x)$ is non-increasing.
Hence, for any $\delta$ sufficiently small one may choose $c_4'$ and hence $t$ small enough such that
\begin {equation}
\begin{split}
(1-e^{-t})^{1/\alpha}&L^\ast((1-e^{-t})^{-1}) \geq
\tfrac{1}{2}(1-e^{-t})^{1/\alpha}\tilde L((1-e^{-t})^{-1}) \\
&\geq k_1 t^{1/\alpha}\tilde L(2/t) 
= k_1 \frac{\lambda^{1/\alpha}}{n} 
\frac{n \tilde L(a_n) }{a_n^{1/\alpha}} 
\frac{\tilde L(\frac{2}{t}) }{\tilde L(\frac{\lambda}{t})} \\
&\geq k_2 \frac{\lambda^{1/\alpha-\delta}}{n}
\end {split}
\end{equation}
for some constants $k_1,k_2>0$. In the second step we used that $1-e^{-t} > t/2$ for $t$ small
enough, and in the last step we used~\eqref{scaling}
and Lemma~\ref{l:powerratio}.

Finally, for $\lambda\geq c_3'$ with $c_3'$ 
large enough that $k_2\lambda^{1/\alpha-\delta}\geq c$
(the constant from Lemma~\ref{height_lem})
we have
\begin{equation}
\P\Big(\sum_{i=1}^{\open_n} X_i^{(n)}<\lambda^{-1} a_n\Big)\leq
e(1-p(k_2\lambda^{1/\alpha-\delta}-c)/n)^{\lfloor n/2 \rfloor}\leq
e\exp\Big(-\frac{p}{3}\big(k_2\lambda^{1/\alpha-\delta}-c\big)\Big),
\end{equation}
proving~\eqref{stretched2} and hence~\eqref{stretched}.

We now show that~\eqref{stretched} 
implies~\eqref{vol_inv}, using an argument similar
to one in~\cite{fuji:2008}.
Let $m$ be a fixed integer large enough such
that $c_3 \leq c_4 a_m$ and let $\gamma > 0$.  
Then by~\eqref{stretched}
\begin{equation}
\begin{split}
\E[V_n^{-\gamma}]&= \E\Big[V_n^{-\gamma}\indic{V_n^{-1}\leq c_4 a_m/a_n}\Big] +\sum_{k=m}^{n-1} \E\Big[V_n^{-\gamma}\indic{c_4 a_k
    /a_n<V_n^{-1}\leq c_4a_{k+1}/a_n}\Big]\\ &\quad+
\E\Big[V_n^{-\gamma}\indic{V_n^{-1}> c_4a_n/a_n}\Big]\\ 
&\leq (c_4 a_m)^\gamma a_n^{-\gamma}+ 
c_1 c_4^\gamma a_n^{-\gamma}\sum_{k=m}^{n-1}a_{k+1}^\gamma 
\exp(-c_2 (c_4 a_k)^{1/\alpha-\delta}) \\
&\quad+ c_1\exp(-c_2 (c_4 a_n)^{1/\alpha-\delta}) \leq K_\gamma a_n^{-\gamma},
\end{split}
\end{equation}
as required.  Here we used that $a_n$ is increasing, and that
from~\eqref{anslowly} and Lemma \ref{l:powerslow} 
it follows that for any $\epsilon > 0$ there are constants
$C_1,C_2>0$ (possibly depending on $\epsilon$) such that
\begin {equation}
 C_1 n^{\alpha-\epsilon} < a_n < C_2 n^{\alpha+\epsilon}
\end {equation}
so $\exp(-c_2 (c_4 a_n)^{1/\alpha-\delta})$ decays faster than any power
of $n$.   The sum
in the second last line thus converges as $n\rightarrow \infty$ and
the term following the sum is negligible.
\end{proof}

Applying Markov's inequality and using the preceding Lemma one finds that for every $\gamma > 0$ there is a constant $c_\gamma >0$ 
such that for all $n\geq 0$ and all
$\l>1$ we have
\begin{equation} \label{lvolresult}
\P(V_n<\lambda^{-1}a_n)\leq c_\gamma \lambda^{-\gamma}.
\end{equation}
This establishes the part of~\eqref{J2} regarding the volume.

As in the preceeding proof, let $X^{(n)}$ denote the number of vertices of $\Loop(\tree)$ at distance at most $n$ from the root. Recall that $f(s) = \E(s^\off)$. Then the following holds.

\begin{lemma}\label{exp_vol}
As $n\rightarrow \infty$, 
 \begin {equation}
  \E(X^{(n)}) \sim M n^{-1} a_n 
 \end {equation}
where
\begin {equation}
M = \left\{\begin{array}{c l}
     \frac{2f''(1)}{3+h(1)+f''(1)}& ~\text{if}~f''(1)<\infty\\
      \frac{2^{\alpha-1}}{\Gamma(\alpha)} & ~\text{otherwise,}
    \end{array}
\right.
\end {equation}
and
\begin {equation} \label{oddg}
 h(x) = \sum_{i=0}^\infty \pi_{2i+1}x^i.
\end {equation}

\end{lemma}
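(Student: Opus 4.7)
The strategy is a generating function computation. Let
\[
D(s) := \E\Bigl[\sum_{u \in \tree} s^{d_{\Loop(\tree)}(\root, u)}\Bigr],
\]
so that $\sum_n \E(X^{(n)}) s^n = D(s)/(1-s)$. Decomposing $\tree$ at the root: if the root has $k$ offspring, the $j$-th child ($1 \leq j \leq k$) lies at distance $\min(j, k+1-j)$ from $\root$ in $\Loop(\tree)$ (going around the root-cycle the short way) and roots an independent copy of $\tree$. This yields the recursion $D(s) = 1 + \phi(s) D(s)$, where
\[
\phi(s) := \E[\psi_s(\xi)], \qquad \psi_s(k) := \sum_{j=1}^{k} s^{\min(j, k+1-j)}.
\]
Since $\psi_s(k) \leq k$ with strict inequality for $k \geq 1$ and $s < 1$, we have $\phi(s) < \E(\xi) = 1$ so that $D(s) = 1/(1-\phi(s))$ is finite on $[0,1)$. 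A parity split of $\psi_s(k)$ gives
\[
\phi(s) = 2\sum_{j \geq 1} s^j \P(\xi \geq 2j) + s h(s),
\]
and $\phi(1) = \E(\xi) = 1$, so $D$ is singular at $s=1$.

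Next I analyze $1 - \phi(s)$ as $s \uparrow 1$. Using $\sum_{k \geq 1} x^k \P(\xi \geq k) = x(1-f(x))/(1-x)$ with the parity substitution $x = \pm \sqrt{s}$,
\[
2\sum_{j \geq 1} s^j \P(\xi \geq 2j) = \sqrt{s}\cdot\frac{1-f(\sqrt{s})}{1-\sqrt{s}} - \sqrt{s}\cdot\frac{1-f(-\sqrt{s})}{1+\sqrt{s}}.
\]
Relation~\eqref{frel} rewrites the first term as $\sqrt{s} - \sqrt{s}(1-\sqrt{s})^{\alpha-1}L((1-\sqrt{s})^{-1})$. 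The second term and $s h(s)$ are smooth at $s = 1$ (since $\E(\xi) < \infty$), and their constant parts cancel against $1$ using $1 - f(-1) = 2 h(1)$; the resulting regular contribution is of order $O(1-s)$. For $\alpha \in (1, 2)$, applying slow variation of $L$ and $1 - \sqrt{s} \sim (1-s)/2$ gives
\[
1 - \phi(s) \sim 2^{1-\alpha}(1-s)^{\alpha-1} L\bigl((1-s)^{-1}\bigr),
\]
the regular $O(1-s)$ part being negligible, and the same asymptotic persists when $\alpha = 2$ with $f''(1) = \infty$. When $\alpha = 2$ and $f''(1) < \infty$, both the singular and regular parts are of order $1 - s$; a direct expansion of $\psi_s(k)$ at $s = 1$ using the bound $|k - \psi_s(k)| \leq (k+1)^2(1-s)/4$ and dominated convergence (valid since $\E(\xi^2) < \infty$) yields $1 - \phi(s) \sim \tfrac{1}{4}(f''(1) + 3 + h(1))(1-s)$, where one uses $\E(\xi^2) = f''(1) + 1$, $\P(\xi \text{ odd}) = h(1)$ and $\E(\xi) = 1$ to identify the constant.

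Finally, $\sum_n \E(X^{(n)}) s^n = [(1-s)(1-\phi(s))]^{-1}$ is a power series with non-negative coefficients that behaves, respectively, like $2^{\alpha-1}(1-s)^{-\alpha}/L((1-s)^{-1})$ or like $4/[(f''(1)+3+h(1))(1-s)^2]$ as $s \uparrow 1$. Karamata's Tauberian theorem gives
\[
\E(X^{(n)}) \sim \frac{2^{\alpha-1} n^{\alpha-1}}{\Gamma(\alpha)L(n)} \quad\mbox{respectively}\quad \E(X^{(n)}) \sim \frac{4n}{f''(1) + 3 + h(1)}.
\]
Combined with $a_n \sim n^\alpha/L(n)$ from~\eqref{anslowly} (and $L(n) \to f''(1)/2$ in the finite-variance case, obtained from~\eqref{q2} and~\eqref{q1}), this rearranges to $\E(X^{(n)}) \sim M n^{-1} a_n$ with $M$ as stated. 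The main obstacle is the asymptotic analysis of $1 - \phi(s)$: one must track the cancellation of order-$1$ contributions through the $\sqrt{s}$ substitution, and verify that for $\alpha < 2$ the regular $O(1-s)$ terms are indeed dominated by the singular $(1-s)^{\alpha-1}L((1-s)^{-1})$ contribution.
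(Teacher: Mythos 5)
Your proposal is correct and follows essentially the same route as the paper: a root decomposition of the looptree yields the renewal identity $\sum_n \E(X^{(n)})s^n = [(1-s)(1-\phi(s))]^{-1}$, where your $\phi$ coincides exactly with the paper's $G$, and the asymptotics of $1-\phi(s)$ near $s=1$ via~\eqref{frel} plus the Tauberian theorem finish the argument. The only (harmless) differences are that you set up the generating function directly through the distance-generating function $D(s)$ rather than via the recursion for $\E(X^{(n)})$, and you extract the finite-variance constant $\tfrac14(f''(1)+3+h(1))$ by dominated convergence on $\E[\xi-\psi_s(\xi)]$ instead of the paper's second-order algebraic expansion.
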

\begin {proof}
One has $\E(X^{(0)}) = 1$ and using the independence structure of $\Loop(\tree)$ one immediately arrives at the recursion
 \begin {align} \nonumber
  \E(X^{(n)}) &= \sum_{i=0}^\infty \pi_i \Big(1+2\sum_{j=1}^{\lfloor \frac{i}{2}\rfloor\wedge n} \E(X^{(n-j)})+\indic{i ~\text{is odd}}\indic{\lfloor \frac{i}{2}\rfloor+1\leq n}\E(X^{(n-\lfloor \frac{i}{2}\rfloor-1)})\Big) \\ \nonumber
  &= 1 + 2\sum_{i=0}^{2n-1} \pi_i \sum_{j=1}^{\lfloor \frac{i}{2}\rfloor} \E(X^{(n-j)})+2\sum_{i=2n}^{\infty} \pi_i \sum_{j=1}^{n} \E(X^{(n-j)}) \\
  &+\sum_{m=0}^{n-1}\pi_{2m+1}\E(X^{(n-m-1)}) = 1 +\sum_{j=1}^{n}  \Big(2\sum_{i=2j}^\infty \pi_i+\pi_{2j-1}\Big)\E(X^{(n-j)})  \label{volrec}
 \end {align}
for $n\geq 1$, where in the last step we swapped the $j$ and $i$ sums and renamed $m=j-1$ in the last sum. 

Next, define the generating function
\begin {equation}
 F(x) = \sum_{n=0}^\infty \E(X^{(n)}) x^n.
\end {equation}
Multiplying both sides of \eqref{volrec} with $x^n$, 
summing over $n\geq 0$ and swapping the sum over $n$ 
and $j$ yields a simple equation for $F(x)$ which has solution
\begin {equation} \label{fsolution}
 F(x) = \frac{1}{(1-x)(1-G(x))}.
\end {equation}
with
\begin {equation}
 G(x) = \sum_{j=1}^\infty \Big(2\sum_{i=2j}^\infty \pi_i + \pi_{2j-1}\Big)x^j.
\end {equation}
Swapping the $j$ and $i$ sum and some rewriting gives
\begin {equation}
 G(x) = \frac{2x}{1-x}\Big(1-f(x^{1/2}) - \frac{(1-x^{1/2})^2}{2} h(x)\Big)
\end {equation}
with $f$ from \eqref{genf} and $h$ from \eqref{oddg}. 
Inserting this expression into \eqref{fsolution} gives
\begin{equation}
F(x)^{-1}=1-x-2x\Big((1-x^{1/2})+(x^{1/2}-f(x^{1/2}))
-\tfrac{1}{2}(1-x^{1/2})^2h(x)\Big).
\end{equation}
Expanding the first term $1-x^{1/2}$ to second order and
applying~\eqref{frel} gives
\begin{equation}
F(x)^{-1}\sim (1-x)^2(1-\tfrac{x}{4})+
2(1-x^{1/2})^\alpha L\big((1-x^{1/2})^{-1}\big)+
(1-x^{1/2})^2h(x),
\end{equation}
as $x\rightarrow 1^-$.
By expanding the terms $1-x^{1/2}$ to first order this yields
\begin {equation}
 F(x) \sim (1-x)^{-\alpha} \Big((3/4+h(1)/4)(1-x)^{2-\alpha}+2^{1-\alpha}L((1-x)^{-1})\Big)^{-1}
\end {equation}
as $x\rightarrow 1^-$. The expression in the large parentheses is
clearly slowly varying as $x\rightarrow 1^-$,
and $L((1-x)^{-1})\to\tfrac{1}{2}f''(1)$ when 
$f''(1)<\infty$, by~\eqref{q1}--\eqref{q2}. 
The result therefore follows by
applying the Tauberian Theorem~\ref{thm:tauberian} along with 
\eqref{anslowly} and the fact
that $\E(X^{(n)})$ is increasing in $n$.
\end {proof}
From the preceding lemma we get the following bounds 
on $\E(V_n)$ which prove~\eqref{hausdim}.
\begin {lemma} \label{l:hausdorff}
 There are constants $k_1$ and $k_2$ such that
 \begin {equation} \label{expectedvol}
  k_1 a_n \leq \E(V_n) \leq k_2 a_n.
 \end {equation}
 for all $n \geq 0$.
\begin {proof}
Let $\mathcal{C}$ be the loopspine of
$\sgloop$  and let $A_n$ be defined as in~\eqref{An_def}. 
Let $(v_i)_i$ be a list of the vertices in $A_n\setminus\{x_j:j\geq 1\}$ and denote the finite outgrowth from $v_i$
by $\Loop(\tree_i)$. Let $X_{i}^{(n)}$ be the number of vertices in $\Loop(\tree_i)$ at distance at most $n$ from the root of $\Loop(\tree_i)$.  Recall that
$(\tree_i)_{i}$ is a sequence of independent Galton--Watson trees with offspring
distribution $\off$, which are furthermore independent of $|A_n|$.
For the upper bound we note that
\begin {equation}
 V_n \leq \sum_{i=1}^{|A_n|} X_{i}^{(n)} + |A_n|
\end {equation}
and thus by Wald's lemma, Lemma \ref{l:loopspinevol} and Lemma \ref{exp_vol}
\begin {equation}
 \E(V_n) \leq \E(|A_n|)\E(X_{i}^{(n)}+1) \leq k_2 a_n.
 \end{equation}
 For the lower bound we note that $V_n$
dominates a sum $\sum_{i=1}^{\open_{\lfloor n/2\rfloor}}X^{(\lfloor n/2\rfloor)}_i$ 
with $\open_{n}$ from Lemma \ref{l:openbound} obeying $\E(\open_{n}) \geq p \lfloor n/2\rfloor$ and $p>0$. Since $\open_{\lfloor n/2\rfloor}$ is independent of the $X^{(\lfloor n/2\rfloor)}_i$'s the result thus follows from Wald's lemma and Lemma \ref{exp_vol}.
\end {proof}
\end {lemma}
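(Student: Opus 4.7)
The plan is to decompose $\sgloop$ along its loopspine $\mathcal{C}$, as described in Section~\ref{local_sec}: $\mathcal{C}$ is a chain of cycles with independent Galton--Watson outgrowths $\Loop(\tree_i)$ attached at its open vertices. Three preceding results will do most of the work. Lemma~\ref{l:loopspinevol} bounds $\E(|A_n|)$ by $16n+1$; Lemma~\ref{l:openbound} shows the number $\open_n$ of open vertices within distance $n$ of the root stochastically dominates a $\mathrm{Bin}(\lfloor n/2\rfloor,p)$ variable with $p>0$; and Lemma~\ref{exp_vol} gives $\E(X^{(n)})\sim M a_n/n$, where $X^{(n)}$ is the size of the ball of radius $n$ in $\Loop(\tree)$ centered at the root of a single critical Galton--Watson outgrowth. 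Both bounds will then come from Wald's identity.

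For the upper bound, every vertex of $B(n;\sgloop)$ lies either in $A_n$ or in an outgrowth whose root is a point of $A_n$, and in the latter case at graph distance at most $n$ from that root. Consequently
\[
V_n \le |A_n| + \sum_{i=1}^{|A_n|} X_i^{(n)},
\]
where the $(X_i^{(n)})$ are i.i.d.\ copies of $X^{(n)}$, independent of $|A_n|$ since the outgrowth attached to each vertex of $\mathcal{C}$ is constructed independently of the loopspine. Wald's identity combined with Lemmas~\ref{l:loopspinevol} and~\ref{exp_vol} then gives $\E(V_n)\le (16n+1)(1+\E(X^{(n)}))=O(a_n)$, since $n \cdot (a_n/n) = a_n$.

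For the lower bound, I would discard the loopspine itself and keep only the outgrowths attached to open vertices of $\mathcal{C}$ within distance $\lfloor n/2\rfloor$ of $\root$; each such outgrowth contributes at least $X^{(\lfloor n/2\rfloor)}$ vertices to $B(n;\sgloop)$, because its root is within distance $n/2$ of $\root$, so vertices within distance $n/2$ of the outgrowth's root also lie in $B(n;\sgloop)$. Hence
\[
V_n \ge \sum_{i=1}^{\open_{\lfloor n/2\rfloor}} X_i^{(\lfloor n/2\rfloor)}.
\]
Another application of Wald's identity, using $\E(\open_{\lfloor n/2\rfloor})\ge c_0 n$ from Lemma~\ref{l:openbound} together with Lemma~\ref{exp_vol}, produces $\E(V_n)\ge c\, a_{\lfloor n/2\rfloor}$, which is comparable to $a_n$ by \eqref{anslowly} and slow variation. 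The one subtle point is verifying the independence needed for Wald's identity on each side, namely that conditional on the loopspine (hence on $|A_n|$ and $\open_{\lfloor n/2\rfloor}$) the outgrowths at open vertices of $\mathcal{C}$ are i.i.d.\ copies of $\Loop(\tree)$; this follows directly from the recursive construction of $\sgloop$ from the modified Galton--Watson tree in Section~\ref{local_sec}.
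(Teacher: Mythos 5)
Your proposal is correct and follows essentially the same route as the paper: the identical decomposition $V_n\le |A_n|+\sum_{i=1}^{|A_n|}X_i^{(n)}$ with Wald's identity, Lemma~\ref{l:loopspinevol} and Lemma~\ref{exp_vol} for the upper bound, and domination by $\sum_{i=1}^{\open_{\lfloor n/2\rfloor}}X_i^{(\lfloor n/2\rfloor)}$ with Lemma~\ref{l:openbound} for the lower bound. Your explicit remarks that $a_{\lfloor n/2\rfloor}$ is comparable to $a_n$ by regular variation and that the independence needed for Wald comes from the recursive construction are points the paper leaves implicit, but they do not constitute a different argument.
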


By applying Markov's inequality and using the previous Lemma one finds that there is a constant $c>0$ such that for all $n\geq 1$ and all $\lambda > 1$
\begin {equation}
 \P(V_n > \lambda a_n) \leq c \lambda^{-1}.
\end {equation}
This establishes the part of~\eqref{J1} regarding the volume. Note that, since $\Reff(\root,B(n;\sgloop)^c)\leq n=r(n)$
always, the lemma also implies inequality~\eqref{J3}.

\subsection{Bounds on the resistance}
Having dealt with the volume bounds in~\eqref{J1}
and~\eqref{J2} we
now turn to the bounds on the resistance.
First we note
that the upper bound is trivial: 
since $\Reff(\root,v) \leq d_\sgloop(\root,v)$ 
for all $v \in V(\sgloop)$ we have for  
all $n\geq1$ and $\lambda>1$ that
\begin {equation}
 \P(\exists v \in B(n;\sgloop)~:~ \Reff(\root,v) > \lambda
 d_\sgloop(\root,v) ) = 0,
\end {equation}
which together with
\eqref{lvolresult} proves~\eqref{J2}.
Therefore~\eqref{J1}, and hence 
Theorem~\ref{KM_thm}, follows once we prove the following
lower bound on the resistance:
\begin{lemma}\label{l:lres}
For any $q\in(0,\alpha-1)$ there is a constant
$c>0$ such that
\begin{equation}
\P(\Reff(\root,B(n;\sgloop)^c)< \lambda^{-1}n) 
\leq c\lambda^{-q}.
\end{equation}
\end{lemma}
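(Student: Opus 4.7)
The plan is to leverage the series--parallel structure of the loopspine $\cC$ via a cut-vertex argument. Since outgrowths attach to $\cC$ at single vertices and are finite (random) trees, they behave as dangling branches when computing effective resistance between vertices on $\cC$. Consequently, for each $i\ge 2$ the attachment vertex $x_i$ is a cut vertex in $\sgloop$ that separates $\root$ from the downstream part $C_i\cup C_{i+1}\cup\cdots$ together with all its outgrowths. Applying the parallel law to each cycle and the series law along the chain gives
\[
\Reff_\sgloop(\root,x_i)=\sum_{j=1}^{i-1}R_j,\qquad R_j=\frac{U_j(X_j)(X_j+1-U_j(X_j))}{X_j+1}\ge Y_j/2,
\]
where the last inequality uses $Y_j\le (X_j+1)/2$ and thus $X_j+1-Y_j\ge (X_j+1)/2$.

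Let $D_i=\sum_{j=1}^{i-1}Y_j$ be the loopspine distance from $\root$ to $x_i$, and set $i_0=\min\{i:D_i\ge n/\lambda\}$. I would show that, on an event of probability at least $1-c\lambda^{-q}$, the vertex $x_{i_0}$ separates $\root$ from $B(n;\sgloop)^c$; on this event $\Reff(\root,B(n;\sgloop)^c)\ge \Reff(\root,x_{i_0})\ge D_{i_0}/2\ge n/(2\lambda)$, and the lemma follows after replacing $\lambda$ by $2\lambda$. For $x_{i_0}$ to be such a cut vertex, it suffices that every cycle $C_j$ with $j<i_0$, and every outgrowth attached to such a $C_j$, lies entirely inside $B(n;\sgloop)$. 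This can fail in only two ways: \textbf{(a)} some $C_j$ with $j<i_0$ has $X_j\ge 2(n-D_j)$, or \textbf{(b)} some outgrowth attached to such a $C_j$ has $\height\ge n-D_j$; since $D_j<n/\lambda$ (take $\lambda\ge 2$), these essentially require $X_j\gtrsim n$ and $\height\gtrsim n/2$ respectively.

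For (b), Lemma~\ref{l:loopspinevol} gives $\E|A_{n/\lambda}|=O(n/\lambda)$, and conditionally on the loopspine each open vertex of $\cC$ carries an independent outgrowth which, by Lemma~\ref{height_lem}, has probability $O(1/n)$ of reaching height at least $n/2$. A Markov/union bound therefore yields $\P(\text{(b)})=O(\lambda^{-1})$, which is dominated by $\lambda^{-q}$ since $q<\alpha-1\le 1$. For (a), I would combine a union bound over the first $M$ cycles with the hitting-time inclusion $\{i_0>M\}\subseteq\{\max_{j\le M}Y_j<n/\lambda\}$. Using the size-biased tail $\P(\offhat>x)\sim c\,x^{1-\alpha}L_1(x)$ (derived from~\eqref{stabletail}) and the induced tail $\P(Y\ge y)\sim c'\,y^{1-\alpha}L_1(y)$, and then choosing $M$ so that $\P(i_0>M)\le(1-\P(Y\ge n/\lambda))^M\le \lambda^{-q}$, one obtains $M\asymp (\log\lambda)\,\lambda^{\alpha-1}n^{1-\alpha}/L_1(n/\lambda)$, and correspondingly $M\,\P(\offhat\ge n)\asymp(\log\lambda)\,\lambda^{1-\alpha}L_1(n)/L_1(n/\lambda)$.

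The main technical obstacle is controlling the slowly-varying ratio $L_1(n)/L_1(n/\lambda)$ uniformly in $n$. A Potter-type bound (cf.~Lemma~\ref{l:powerratio}, as used in the proof of Lemma~\ref{exp_inv_vol}) absorbs this ratio into $\lambda^{\epsilon}$ for arbitrarily small $\epsilon>0$; choosing $\epsilon$ with $q+\epsilon<\alpha-1$ makes $(\log\lambda)\,\lambda^{1-\alpha+q+\epsilon}$ bounded, and the same Potter bound handles the matching term in $\P(i_0>M)$. Together these give $\P(\text{(a)})\le c\lambda^{-q}$, completing the argument.
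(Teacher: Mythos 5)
Your overall strategy --- cut the loopspine at the first cycle-root $x_{i_0}$ beyond distance $n/\lambda$, lower-bound $\Reff(\root,x_{i_0})\geq D_{i_0}/2$ by the series and parallel laws, and bound the probability that $x_{i_0}$ fails to separate $\root$ from $B(n;\sgloop)^c$ --- is sound and is a legitimate variant of the paper's argument (the paper instead builds a separating set adapted to the position of the first ``tall'' outgrowth and estimates negative moments of its distance from the root). The resistance identity, the bound $R_j\geq Y_j/2$, and the treatment of your event (a) via $\E[i_0]\lesssim(n/\lambda)^{\alpha-1}/L_1(n/\lambda)$, Wald's identity and Potter-type bounds all go through.

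The gap is in event (b). Your case analysis asserts that an outgrowth hanging off $C_j$ with $j<i_0$ can escape $B(n;\sgloop)$ only if its height is at least $n-D_j\geq n/2$, and you count the candidate attachment points by $\E|A_{n/\lambda}|=O(n/\lambda)$. Both steps tacitly assume that all of $C_1\cup\dots\cup C_{i_0-1}$ lies within distance $n/\lambda$ of $\root$. That is false: only the roots satisfy $D_j<n/\lambda$, while the cycles themselves have heavy-tailed lengths and reach up to distance $D_j+\lfloor (X_j+1)/2\rfloor$, which can be anywhere up to $n$ without triggering your event (a). An outgrowth attached at such a far point of $C_j$, say at distance $D_j+s$ from $\root$, is not counted in $|A_{n/\lambda}|$ and needs only height $n-D_j-s$, which can be $O(1)$; so neither (a) nor (b) as stated covers this failure mode, and the claimed bound $O(\lambda^{-1})$ for (b) is not established. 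The repair is to first exclude, at cost $\E[i_0]\,\P(\offhat\geq n/2)=O(\lambda^{1-\alpha+\epsilon})$, the event that some $C_j$ with $j<i_0$ reaches level $n/2$ (this is precisely the role of the stopping index $I_n$ and the restriction to $\mathcal{C}'_n$ in the paper's proof); on the complement every attachment point is within distance $n/2$, each dangerous outgrowth needs height at least $n/2$ (probability $c/n$ by Lemma~\ref{height_lem}), and the number of attachment points must be bounded by $\E\bigl[\sum_{j<i_0}(X_j\wedge n)\bigr]=\E[i_0-1]\,\E[\offhat\wedge n]\asymp n\,\lambda^{1-\alpha}L_1(n)/L_1(n/\lambda)$ via Wald, not by $\E|A_{n/\lambda}|$. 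This yields a bound $O(\lambda^{1-\alpha+\epsilon})$ for (b) rather than $O(\lambda^{-1})$, which still suffices for the lemma, but the argument as written does not close.
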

\begin{proof}
Let $(\offhat_i)_{i \geq 1}$ be a
sequence of independent copies of 
$\offhat$, and as before let $(C_i)_{i\geq 1}$ be a sequence of
cycles with lengths $(\offhat_i+1)_{i\geq 1}$ with marked
vertices $x_i$, joined together to
form an infinite chain $\cC$.  Recall that $\sgloop$
is formed by attaching outgrowths at the
vertices $\cC\setminus\{x_i:i\geq1\}$, these
being independent copies of $\Loop(\tree)$
where $\tree$ is a Galton--Watson tree with
offspring distribution $\off$.
 
In order to bound the resistance
$\Reff(\root,B(n;\sgloop)^c)$ we aim to find a set of
vertices in $\mathcal{C}$ of size at most 2 which (i) separates $\root$ from 
all vertices of $\sgloop$ at distance at 
least $n$ from $\root$, and (ii)
is on average sufficiently far away from $\root$ (see \eqref{dn_prob_bound} for a precise statement).  
Note that if $v\in\cC$ then 
$d_\sgloop(\root,v)=d_\cC(\root,v)$.
We will consider only the cycles $C_i$ up to
and including the first one which contains a vertex at
distance $\geq n/2$ from $\root$.
Writing 
$Y_i = \min\{U_i(\offhat_i), \offhat_i-U_i(\offhat_i)+1\}$,
with the $U_i(\offhat_i)$ uniform in $\{1,2,...,\offhat_i\}$
as in~\eqref{Yn_def}, the first $C_i$ intersecting
level $\geq n/2$ is $C_{I_n}$, where 
\begin {equation}
 I_n = \min\Big\{N: \sum_{i=1}^{N-1} Y_i+
\lfloor \offhat_N/2\rfloor \geq n/2\Big\}.
\end {equation}
The truncated chain consisting of the cycles 
$(C_i)_{1\leq i \leq I_n}$ will be denoted 
by $\mathcal{C}_n$ and we write 
$\mathcal{C}'_n=\{v\in\mathcal{C}_n:
d_\mathcal{C}(\root,v)\leq n/2\}$.  We will consider
outgrowths from $\mathcal{C}'_n$ which have height 
$\geq n/2$ as candidates
for outgrowths which reach level $n$.  It is clear that no other
outgrowths from $\mathcal{C}'_n$ can reach level $n$ 
since their roots are at a distance 
$<n/2$ from $\root$.  The probability that an outgrowth has height
$\geq n/2$ will be denoted by $p_n$, and by 
Lemma~\ref{height_lem} we have
 $p_n \leq c/n$
for some constant $c>0$. 
Before proceeding, we define for each vertex 
$v\in C_i\setminus\{x_i\}$ its `mirror image' $v'$ as the
unique vertex not equal to $v$ in $C_i$ which has the
same distance from $x_i$ as $v$. If there is no such 
vertex (which may happen when $v$ is at the 'tip' of the cycle) we take $v'=v$. 

Now, to each vertex 
$v\in\mathcal{C}\setminus\{x_1,x_2,\ldots\}$ 
assign a \emph{mark} if
the outgrowth from $v$ has height $\geq n/2$.  Thus each vertex is
marked independently with probability $p_n$. 
We denote the index of the first
cycle to contain a mark by $K$, and consider two main cases.
In the first case there is no marked vertex  
$v\in\mathcal{C}_n$ (i.e.\! $K>I_n$) and
we define the separating set $S_n$ either as 
$S_n =\{x_{I_n+1},x_{I_n+1}'\}$ if 
$d_\mathcal{C}(\root,x_{I_n+1}) < n/2$ (Fig.~\ref{f:sep}, (1a)) or the
intersection of $\mathcal{C}_n$ with level $\lfloor n/2\rfloor$
otherwise (Fig.~\ref{f:sep}, (1b)).  In the second case there is some
marked vertex in $\mathcal{C}_n$ (i.e.\! $K\leq I_n$). 
We then define the separating set as
$S_n=\{x_K\}$ if $K>1$ (Fig.~\ref{f:sep}, (2a)) but as the set of
neigbours of $x_1 = \root$ if $K=1$ (Fig.~\ref{f:sep}, (2b)).
\begin{figure} [t]
\centerline{\scalebox{0.68}{\includegraphics{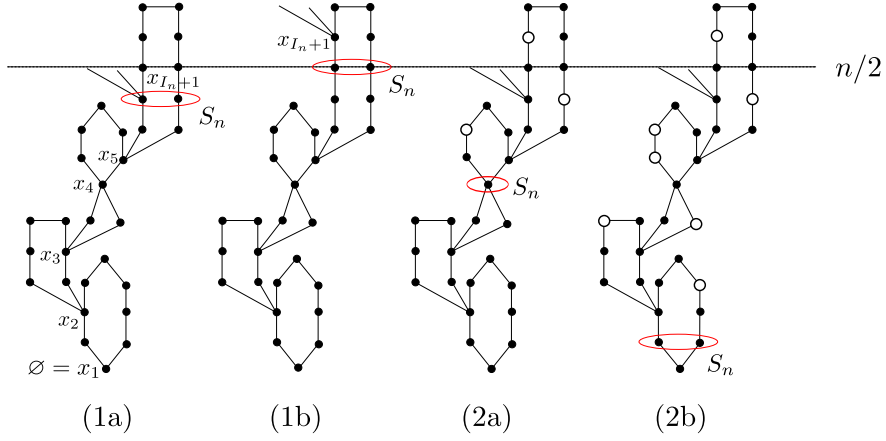}}}
\caption{Example of a chain $\cC_n$ with marks (white circles $\circ$) and the
  different choices of the separating set $S_n$.} \label{f:sep}
\end{figure}

Note that $S_n$ consists of either 1 or 2 vertices, both at the same
distance from $\root$. We denote this distance by $D_n$ and it holds
that
\begin {equation}
 D_n = \left\{\begin {array}{ll}\max\{d_\cC(x_K,\root),1\} & \text{if}~K\leq I_n\\
        \min\{d_\cC(x_{I_n+1},\root),\lfloor n/2 \rfloor\} & \text{otherwise}.
              \end {array} \right.
\end {equation}
We aim to show that for each $\epsilon>0$ there is
a constant $a>0$ such that
\begin{equation}\label{dn_prob_bound}
\P(D_n\leq i)\leq a ((i/n)^{\alpha-1-\epsilon}\wedge 1)+ \indic{\floor{n/2}\leq i}.
\end{equation}
It follows from~\eqref{dn_prob_bound} 
that if $0<q < \alpha -1-\epsilon$  then
\begin{equation}\begin{split}
 \E(D_n^{-q}) &\leq q \sum_{i=1}^{\infty} i^{-q-1} \P(D_n \leq i) \\
 &\leq aq \sum_{i=1}^{n} i^{-q+\alpha-2-\epsilon}/n^{\alpha-1-\epsilon} +
q \sum_{i= n+1}^\infty i^{-q-1}+q\sum_{i= \floor{n/2}}^\infty i^{-q-1}
\leq c' n^{-q}
\end{split}\end{equation}
for some constant $c' > 0$. 
Since $S_n$ separates $\root$
from $B(n;\sgloop)^c$ it follows that
\begin {equation}
\Reff(\root,B(n;\sgloop)^c)\geq \Reff(\root,S_n)
\end {equation}
and by the series and parallel laws we have
\begin {equation}
\Reff(\root,S_n)\geq D_n/2.
\end {equation}
It follows that
\begin {equation}
 \E(\Reff(\root,B(n;\sgloop)^c)^{-q}) 
\leq \E((D_n/2)^{-q}) \leq c'' n^{-q}
\end {equation}
and hence by Markov's inequality that
\begin{equation}\begin{split}
 \P(\Reff(\root,B(n;\sgloop)^c)< \lambda^{-1}n) 
&= \P(\Reff(\root,B(n;\sgloop)^c)^{-q}> \lambda^{q}n^{-q})  
\\ &\leq \frac{n^q\E(\Reff(\root,B(n;\sgloop)^c)^{-q})}{\lambda^q} 
\leq c \lambda^{-q},
\end{split}\end{equation}
as claimed.

We now prove~\eqref{dn_prob_bound}. 
Clearly
\begin{equation}\begin{split}
\P(D_n\leq i)&=\sum_{k\geq1} \P(D_n\leq i;K=k; k \leq I_n)
+\P(D_n\leq i; K>I_n).
\end{split}
\end{equation}

Start by considering the case $K> I_n$ and first assume that $d_\cC(x_{I_n+1},\root) > \floor{n/2}$ in which case $D_n = \floor{n/2}$. Then
\begin {equation}
 \P(D_n \leq i; d_\cC(x_{I_n+1},\root) > \floor{n/2};K> I_n) \leq \indic{\floor{n/2} \leq i}.
\end {equation}
Secondly (still with  $K> I_n$), assume $d_\cC(x_{I_n+1},\root) \leq \floor{n/2}$ in which case $D_n = d_\cC(x_{I_n+1},\root)$.
Write
\begin{equation}
a=d_\mathcal{C}(\root,x_{I_n})\qquad
b=\max\{d_\mathcal{C}(\root,u):u\in C_{I_n}\}.
\end{equation}
and note that $b\geq n/2$ by definition of $I_n$.
Conditional on $\mathcal{C}_n$, the point $x_{I_n+1}$
is chosen uniformly on $C_{I_n}\setminus\{x_{I_n}\}$.
Since $\P(d_\mathcal{C}(\root,x_{I_n+1})\leq i\mid \mathcal{C}_n)=0$
if $i\leq a$ we may assume that $a+1\leq i\leq b$.
The number of vertices on $C_{I_n}$ at distance 
between $a+1$ and $i$ from $\root$ is at most $2(i-a)$,
and the number of vertices on $C_{I_n}$ at distance 
between $a+1$ and $b$ from $\root$ is at least $b-a$.
Hence 
\begin{equation}
\P(d_\mathcal{C}(\root,x_{I_n+1})\leq i\mid \mathcal{C}_n)
\leq \frac{2(i-a)}{b-a}\leq \frac{2i}{b}\leq 4i/n.
\end{equation}
It follows that 
$$\P(D_n\leq i; d_\cC(x_{I_n+1},\root) \leq \floor{n/2};  K> I_n)\leq (4i/n)\wedge 1.$$

Finally, consider the case $K\leq I_n$.   On the event $\{K=k\}$
we then have that $D_n\geq d_\mathcal{C}(\root,x_k)=\sum_{j=1}^{k-1}Y_j$. 
Therefore
\begin{align} \nonumber
\sum_{k\geq 1} \P(D_n\leq i; K=k; k \leq I_n)&\leq
\sum_{k \geq 1} \P\Big(\sum_{j=1}^{k-1}Y_j \leq i;\;
  \exists \mbox{ mark in } C_k\Big)\\
&=\E(1-(1-p_n)^{\hat{\off}-1})\sum_{k\geq 1} \P\Big(\sum_{j=1}^{k-1}Y_j \leq i \Big). \label{eq:k<I}
\end{align}
For the last equality we used the fact that $C_k$ and the process of
marks in it is independent of the $C_j$ for $j<k$. 
To estimate the last expression we will use 
some results from the Appendix.   Using
Lemma \ref{l:bias}  we find that
there is a constant $C$ such that 
\begin {equation}\label{eq:stable1}
 \E(1-(1-p_n)^{\offhat-1}) \leq Cn^{1-\alpha}L_1(n),
\end {equation}
where $L_1$ is the function in~\eqref{truncvar}.
By Lemma \ref{l:lowerboundY} 
\begin {equation} \label{eq:stable2}
  \E(1-e^{-Y_j/i}) \geq  \frac{1}{8} i^{1-\alpha} L_1(i). 
\end {equation}
 Thus the sum in \eqref{eq:k<I} may be estimated as follows 
\begin{equation}\begin{split} \nonumber 
\sum_{k\geq 1} \P\Big(\sum_{j=1}^{k-1}Y_j \leq i \Big) &= \sum_{k\geq 1} \P\Big(\exp\big(-\sum_{j=1}^{k-1}Y_j/i\big) \geq e^{-1} \Big) \\ \nonumber 
&\leq e \sum_{k=1}^\infty \E(e^{-Y_j/i})^{k-1} = \frac{e}{1-\E(e^{-Y_j/i})} \\
&\leq 8e i^{\alpha-1} L_1(i)^{-1}.
\end{split}\end{equation}
We have thus shown that
\begin {equation}
 \sum_{k\geq 1} \P(D_n\leq i; K=k; k \leq I_n) \leq 
\left(8eC\left(\frac{i}{n}\right)^{\alpha-1}
\frac{L_1(n)}{L_1(i)}\right) \wedge 1.
\end {equation}
Finally, for any $\epsilon>0$ we have from 
Lemma~\ref{l:powerratio} that there is a constant $C'>0$ such that
for $i\leq n$
\begin {equation}
 \frac{L_1(n)}{L_1(i)} \leq 
C' \left(\frac{i}{n}\right)^{-\epsilon},
 \end {equation}
which gives~\eqref{dn_prob_bound}.
\end{proof}

\ack{The authors would like to thank Svante Janson for
several  interesting
  discussions about slowly varying functions and stable distributions.
SÖS is grateful for hospitality at NORDITA.
The authors also thank the anonymous referees whose
comments and corrections helped improve the paper.}

\section{Appendix}

In this appendix we collect some results about
slowly varying functions and random variables 
with regularly varying tails.
No doubt many of the results stated here are well-known
or follow straightforwardly from well-known results,
but we include them for completeness.

\subsection{Results on slowly varying functions}

The following lemma is quoted from~\cite[Section~1.5]{seneta}
(see $1^\circ$ and the comment on the proof of $5^\circ$
on p.\ 23).

\begin{lemma}\label{l:powerslow}
Let $L$ be slowly varying at infinity.
\begin{enumerate}
\item
For any $\epsilon>0$ there are constants $x_0,C_1,C_2>0$ 
(possibly depending on $\epsilon$) such that
\begin {equation} \label{powerapp2}
 C_1 x^{-\epsilon} < L(x) < C_2 x^\epsilon
\end {equation}
for all $x> x_0$. 
\item For any $\delta > 0$ there are slowly varying functions
$\overline L$ and $\underline L$
such that (i) $\overline L$ and $\underline L$
are asymptotically equal to $L$,
(ii) $x^{\delta}\overline L(x)$ is strictly increasing
in $x$, and (iii) $x^{-\delta}\underline L(x)$ is 
strictly decreasing in $x$.
\end{enumerate}
\end{lemma}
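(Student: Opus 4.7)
The plan is to deduce both parts from Karamata's integral representation of slowly varying functions: any measurable $L$ slowly varying at infinity may be written, on some interval $[a,\infty)$, as $L(x)=c(x)\exp\bigl(\int_a^x \eta(t)\,dt/t\bigr)$ where $c(x)\to c\in(0,\infty)$ and $\eta(t)\to 0$ as $t\to\infty$. I would take this representation as the starting point.

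For part (1), given $\epsilon>0$, the plan is to choose $x_0\geq a$ large enough that $|\eta(t)|\leq \epsilon$ on $[x_0,\infty)$ and $c(x)$ lies in some compact subinterval of $(0,\infty)$ there. Then $\bigl|\int_{x_0}^x \eta(t)\,dt/t\bigr|\leq \epsilon\log(x/x_0)$, so the ratio $L(x)/L(x_0)$ lies between $(x/x_0)^{-\epsilon}$ and $(x/x_0)^\epsilon$ up to a bounded multiplicative factor coming from $c$. Absorbing this factor (and $L(x_0)$) into the constants $C_1,C_2$ yields the claimed bounds.

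For part (2), my plan is to exhibit explicit integral smoothings of $L$. Fix $x_0\geq a$ large enough that $L$ is locally bounded on $[x_0,\infty)$ and such that $\int_{x_0}^\infty t^{-\delta-1}L(t)\,dt<\infty$, both of which can be arranged by invoking part~(1) with some $\epsilon\in(0,\delta)$. Then set
\[
\overline L(x)=\delta\,x^{-\delta}\int_{x_0}^x t^{\delta-1}L(t)\,dt,\qquad \underline L(x)=\delta\,x^{\delta}\int_x^\infty t^{-\delta-1}L(t)\,dt.
\]
Property (ii) is then immediate by differentiation: $x^{\delta}\overline L(x)=\delta\int_{x_0}^x t^{\delta-1}L(t)\,dt$ is the antiderivative of a strictly positive function and so is strictly increasing, and similarly $\frac{d}{dx}\bigl(x^{-\delta}\underline L(x)\bigr)=-\delta x^{-\delta-1}L(x)<0$ gives strict decrease. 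Asymptotic equivalence $\overline L\sim L$ and $\underline L\sim L$ is a direct consequence of Karamata's theorem on integrals of the form $\int_{x_0}^x t^{\beta-1}L(t)\,dt$ and $\int_x^\infty t^{-\beta-1}L(t)\,dt$, which give leading-order asymptotics $\beta^{-1}x^{\beta}L(x)$ and $\beta^{-1}x^{-\beta}L(x)$ respectively; these in turn follow in one line from the representation above.

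The only delicate point is ensuring the integrability of $t^{-\delta-1}L(t)$ at infinity, so I would prove part~(1) first and use it as a black box for part~(2); the fact that a function asymptotic to a slowly varying function is itself slowly varying then guarantees $\overline L$ and $\underline L$ are slowly varying. I do not expect any real obstacle beyond this bookkeeping, since everything reduces to Karamata-type identities once the representation theorem is in hand.
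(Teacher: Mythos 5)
Your proposal is correct, and it is essentially the standard textbook argument. Note that the paper itself gives no proof of this lemma: it is quoted directly from Seneta (Section~1.5, items $1^\circ$ and $5^\circ$), so there is nothing in the paper to compare against line by line. Your route --- Karamata's representation $L(x)=c(x)\exp\bigl(\int_a^x \eta(t)\,t^{-1}dt\bigr)$ for part (1), and the integral smoothings $\overline L(x)=\delta x^{-\delta}\int_{x_0}^x t^{\delta-1}L(t)\,dt$ and $\underline L(x)=\delta x^{\delta}\int_x^\infty t^{-\delta-1}L(t)\,dt$ together with Karamata's integral theorem for part (2) --- is exactly how these facts are established in Seneta and in Bingham--Goldie--Teugels, so you have in effect supplied the omitted proof rather than found a new one. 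Two small points of bookkeeping you correctly flag or should flag: (a) to get the strict bounds with exponent exactly $\epsilon$ in part (1), apply the representation with $|\eta|\leq\epsilon/2$, say, and absorb the bounded factor from $c(\cdot)$ into $C_1,C_2$; (b) strict monotonicity in part (2) uses that $L$ is (eventually) positive and locally integrable, which follows from the representation, so that the integral of $t^{\delta-1}L(t)$ over any interval of positive length is strictly positive. With those noted, the argument is complete.
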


The following result is a consequence of the second
part of Lemma~\ref{l:powerslow}.

\begin {lemma} \label{l:powerratio}
If $L$ is slowly varying at infinity then for any $\delta >0$ 
there exist constants $x_0, C>0$ 
(possibly depending on $\delta$) such that
\begin {equation} \label{papp1}
\frac{L(y x)}{L(x)} < C y^{-\delta}
\end {equation}
for all $x > x_0$ and all $y \in (x_0/x,1]$. 
Similarly, for any $\delta > 0$ there are constants $x'_0, C'>0$ such that
\begin {equation} \label{papp2}
 \frac{L(y x)}{L(x)} < C' y^{\delta}
\end {equation}
for all $x > x_0$ and all $y\geq 1$.
 \end {lemma}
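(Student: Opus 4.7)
The plan is to lift the built-in monotonicity provided by Lemma~\ref{l:powerslow}(2) back to $L$ using the asymptotic equivalence. Both inequalities will be proved by the same two-ingredient recipe: (i) pick a monotonized version of $L$ whose ratio $L(yx)/L(x)$ is directly controlled by a power of $y$, and (ii) sandwich $L$ between constant multiples of that auxiliary function in the asymptotic regime.

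For the bound \eqref{papp2} with $y\geq 1$, I would apply Lemma~\ref{l:powerslow}(2) with the given $\delta$ to obtain a slowly varying $\underline{L}\sim L$ for which $x^{-\delta}\underline{L}(x)$ is strictly decreasing. The monotonicity gives at once $\underline{L}(yx)\leq y^{\delta}\underline{L}(x)$ whenever $y\geq 1$. Choosing $x'_0$ so large that $\tfrac{1}{2}\underline{L}(x)\leq L(x)\leq 2\underline{L}(x)$ for $x\geq x'_0$ (possible by $\underline{L}\sim L$), and noting that $yx\geq x'_0$ automatically when $y\geq 1$, the sandwich yields
$$\frac{L(yx)}{L(x)}\leq 4\,\frac{\underline{L}(yx)}{\underline{L}(x)}\leq 4y^{\delta},$$
so $C'=4$ works.

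For the bound \eqref{papp1} with $y\in(x_0/x,1]$ the recipe is the dual one: I would apply Lemma~\ref{l:powerslow}(2) to obtain $\overline{L}\sim L$ with $x^{\delta}\overline{L}(x)$ strictly increasing, which gives $\overline{L}(yx)\leq y^{-\delta}\overline{L}(x)$ for $0<y\leq 1$. The same sandwich argument then delivers $L(yx)/L(x)\leq 4y^{-\delta}$.

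The only point that requires any attention, and the one that dictates the restriction $y>x_0/x$ in \eqref{papp1}, is that the sandwich $L\asymp \overline{L}$ requires both arguments $x$ \emph{and} $yx$ to lie beyond the threshold where $\overline{L}$ and $L$ are uniformly close. For $y\geq 1$ this is automatic once $x\geq x'_0$, but for $y\leq 1$ it forces $yx\geq x_0$, i.e.\ $y>x_0/x$. This is the only real obstacle; everything else is a routine transfer between $L$ and its monotone counterparts.
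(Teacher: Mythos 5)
Your proof is correct and is precisely the argument the paper intends: the lemma is stated there as a direct consequence of Lemma~\ref{l:powerslow}(2), namely transferring the monotonicity of $x^{\delta}\overline{L}(x)$ and $x^{-\delta}\underline{L}(x)$ back to $L$ via the asymptotic equivalence, with the restriction $y>x_0/x$ arising exactly as you identify.
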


The next lemma gives asymptotic expressions for
integrals of regularly varying functions.

\begin {lemma} \label{l:int}
 Assume that $R(x) \sim x^{-\alpha} L(x)$ where $\alpha \in \mathbb{R}$ and 
$L$ is slowly varying at infinity. If $\alpha \leq 1$ then
 \begin {equation}
  \int_{1}^y R(x) dx \sim y^{1-\alpha}\hat{L}(y) \quad \text{as}~y\rightarrow\infty
 \end {equation}
 where $\hat{L}$ is slowly varying at infinity. 
Furthermore, if $\alpha < 1$ one may take
 \begin {equation}
  \hat{L}(n) = \frac{1}{1-\alpha} L(n).
 \end {equation}
If $\alpha >1$ then
 \begin {equation}
  \int_{y}^\infty  R(x) dx \sim \frac{1}{\alpha-1} y^{1-\alpha} 
L\left(y\right) \qquad \text{as}~y\rightarrow \infty.
 \end {equation}
\end {lemma}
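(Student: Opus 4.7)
The plan is to prove all three cases by the classical Karamata substitution $x=yt$, combined with Potter's bounds from Lemma~\ref{l:powerratio} to enable an application of dominated convergence. A preliminary reduction is that since $R(x)\sim x^{-\alpha}L(x)$, we may replace $R$ by $x^{-\alpha}L(x)$ up to an overall factor $1+o(1)$; contributions from any bounded initial interval $[1,M]$ are $O(1)$ and negligible against the target asymptotics when $\alpha\neq 1$, and can be absorbed in the $\alpha=1$ case as well.

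For $\alpha<1$, substitute $x=yt$ to obtain
\begin{equation*}
\int_1^y x^{-\alpha}L(x)\,dx = y^{1-\alpha}L(y)\int_{1/y}^1 t^{-\alpha}\,\frac{L(yt)}{L(y)}\,dt.
\end{equation*}
By Lemma~\ref{l:powerratio}, for any $\epsilon>0$ with $\alpha+\epsilon<1$, the ratio $L(yt)/L(y)$ is bounded above by $Ct^{-\epsilon}$ uniformly for $t\in(0,1]$ and $y$ large, so the integrand is dominated by the integrable function $Ct^{-\alpha-\epsilon}$ on $(0,1)$. Since $L(yt)/L(y)\to1$ pointwise, dominated convergence gives the integral $\to\int_0^1 t^{-\alpha}\,dt=(1-\alpha)^{-1}$, yielding the claim with $\hat L(y)=L(y)/(1-\alpha)$. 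For $\alpha>1$ the same substitution on $\int_y^\infty x^{-\alpha}L(x)\,dx$ gives $y^{1-\alpha}L(y)\int_1^\infty t^{-\alpha}(L(yt)/L(y))\,dt$; here Lemma~\ref{l:powerratio} supplies the bound $L(yt)/L(y)\leq Ct^{\delta}$ for $t\geq1$ with $\delta$ chosen so that $\alpha-\delta>1$, making the integrand dominated by $Ct^{-\alpha+\delta}$ on $[1,\infty)$, and DCT gives the limit $(\alpha-1)^{-1}$.

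The remaining case $\alpha=1$ is the main obstacle and requires a separate argument, since the dominated-convergence estimate above blows up logarithmically. Write $F(y)=\int_1^y R(x)\,dx$. If $\int_1^\infty R(x)\,dx<\infty$ then $F$ tends to a positive constant, which is trivially slowly varying. Otherwise, $F(y)\to\infty$, and we must show $F(\lambda y)/F(y)\to1$ for each $\lambda>0$. The substitution argument applied on the bounded interval $[y,\lambda y]$ gives
\begin{equation*}
F(\lambda y)-F(y)=\int_y^{\lambda y} R(x)\,dx \sim L(y)\log\lambda,
\end{equation*}
again by Lemma~\ref{l:powerratio} and dominated convergence on the compact interval $[1,\lambda]$ (or $[\lambda,1]$ if $\lambda<1$). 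Hence slow variation reduces to the claim that $L(y)/F(y)\to 0$.

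To establish $L(y)/F(y)\to 0$, I would fix any $\lambda>1$ and iterate: $F(\lambda^n)-F(1)=\sum_{k=0}^{n-1}(F(\lambda^{k+1})-F(\lambda^k))=(\log\lambda)\sum_{k=0}^{n-1}L(\lambda^k)(1+o(1))$. Since $F(\lambda^n)\to\infty$ one has $\sum_kL(\lambda^k)=\infty$, and slow variation of $L$ forces $L(\lambda^n)/\sum_{k=0}^{n-1}L(\lambda^k)\to0$ (this follows from Lemma~\ref{l:powerslow}: for any $\epsilon>0$, $L(\lambda^k)\geq C_1\lambda^{-\epsilon k}L(\lambda^n)$ for $k\leq n$ large, so the partial sum dominates $L(\lambda^n)$). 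Combining these along the geometric sequence $\lambda^n$, and using monotonicity of $F$ to interpolate, gives $L(y)/F(y)\to0$ along all $y\to\infty$, which completes the proof that $F$ is slowly varying and so establishes the $\alpha=1$ statement.
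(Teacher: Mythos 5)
Your treatment of the cases $\alpha<1$ and $\alpha>1$ (Karamata substitution $x=yt$, Potter-type bounds from Lemma~\ref{l:powerratio} to dominate $t^{-\alpha}L(yt)/L(y)$, then dominated convergence) is exactly the paper's argument, so there is nothing to add there. For $\alpha=1$ you genuinely diverge from the paper. The paper substitutes in the \emph{full} integral: with $n_0=m_0\lambda$ it writes $F(\lambda n)=\int_{m_0}^{n}x^{-1}L(x)\,\frac{L(\lambda x)}{L(x)}\,dx$ and sandwiches this between $(1\mp\epsilon)\int_{m_0}^n x^{-1}L(x)\,dx=(1\mp\epsilon)(F(n)+O(1))$, which gives $F(\lambda n)/F(n)\to1$ in two lines. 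You instead work with the \emph{increment} $F(\lambda y)-F(y)\sim L(y)\log\lambda$ and reduce slow variation of $F$ to the auxiliary claim $L(y)/F(y)\to0$, proved by summing increments along a geometric sequence. Both routes are valid; the paper's is shorter because it never needs the ratio $L(y)/F(y)$, while yours has the advantage of producing the quantitatively useful by-product $\int_1^y x^{-1}L(x)\,dx \gg L(y)$.

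One step in your $\alpha=1$ argument needs repair, though the claim itself is true. To show $L(\lambda^n)/\sum_{k=0}^{n-1}L(\lambda^k)\to0$ you invoke Lemma~\ref{l:powerslow} to get $L(\lambda^k)\geq C_1\lambda^{-\epsilon k}L(\lambda^n)$; summing such a bound only shows that the partial sum is at least a \emph{constant} multiple of $L(\lambda^n)$ (the geometric series converges), which does not force the ratio to zero. The correct justification is simpler and uses slow variation directly: for each fixed $j$, $L(\lambda^{n-j})/L(\lambda^n)\to1$ as $n\to\infty$, so for any fixed $m$ the last $m$ terms of the sum each eventually exceed $\tfrac12 L(\lambda^n)$, whence $\sum_{k=0}^{n-1}L(\lambda^k)\geq \tfrac{m}{2}L(\lambda^n)$ for $n$ large and the ratio is at most $2/m$. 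With that substitution (and the Potter bound $L(y)\leq C\lambda^{\delta}L(\lambda^n)$ for $y\in[\lambda^n,\lambda^{n+1}]$ to interpolate, since $L$ need not be monotone), your proof is complete.
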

\begin {proof}
We prove the first part, the second part
may be proved in a similar way.
 For any $\epsilon > 0$ there is an $n_0 >0 $ such that 
 \begin {equation}
  (1-\epsilon) \int_{n_0}^nx^{-\alpha} L(x)  dx <  
\int_{n_0}^n R(x) dx <  (1+\epsilon)\int_{n_0}^n x^{-\alpha} L(x) dx
 \end {equation}
First assume that $\alpha < 1$.
In that case, we are done if we show that
 \begin {equation}
  \int_{n_0}^n x^{-\alpha}L(x) dx \sim \frac{1}{1-\alpha}n^{1-\alpha}L(n).
 \end {equation}
By changing variables in the integral to $y=x/n$ we find that 
\begin {equation} \label{eqint1}
 (n^{1-\alpha} L(n))^{-1} \int_{n_0}^n x^{-\alpha}L(x) dx = 
\int_{n_0/n}^1 y^{-\alpha}\frac{L(yn)}{L(n)} dy. 
\end {equation}
Since $y \leq 1$ then by the first part of Lemma \ref{l:powerratio} 
it holds that for any $\delta>0$ there is an $n_1$ such that
\begin {equation}
\frac{L(yn)}{L(n)} < C y^{-\delta}.
\end {equation}
for all $n > n_1$ and all $y \in (n_1/n,1]$. Choosing $n> n_0 > n_1$
and $\delta$ small enough such that $\alpha+\delta < 1$ allows us to
dominate the integrand in the last expression in \eqref{eqint1} by an
integrable function on $[0,1]$ and thus we may use the dominated
convergence theorem and
\begin{equation}\begin{split}
 \lim_{n\rightarrow\infty} (n^{1-\alpha} L(n))^{-1} 
\int_{n_0}^n x^{-\alpha}L(x) dx 
&= \int_{0}^1 y^{-\alpha}\lim_{n\rightarrow\infty} 
\frac{L(yn)}{L(n)} dy \\
 &= \int_{0}^1 y^{-\alpha}dy = \frac{1}{1-\alpha}.
\end{split}\end{equation}
When $\alpha = 1$ we need to show that
\begin {equation}\label{alpha1}
 F(n) := \int_{n_0}^n \frac{1}{x}L(x)dx
\end {equation}
is slowly varying as $n\rightarrow \infty$. This is trivially true if
$F(n)$ converges as $n\rightarrow \infty$. Thus, we will in the
following assume that $F(n)$ diverges as $n\rightarrow \infty$. Now,
fix a $\lambda > 0$ and choose $n_0 = m_0 \lambda$ for some
$m_0>1$. Then
\begin{equation}\begin{split}
 F(\lambda n) = \int_{n_0}^{\lambda n} \frac{1}{x}L(x)dx &=
 \int_{m_0}^{n} \frac{1}{y}L(\lambda y)dy = \int_{m_0}^n 
\frac{1}{x}L(x)\frac{L(\lambda x)}{L(x)}dx.
\end{split}\end{equation}
For any $\epsilon > 0$ one may choose $m_0$ large enough such that
\begin {equation}
 1-\epsilon < \frac{L(\lambda x)}{L(x)} < 1+\epsilon
\end {equation}
for all $x > m_0$ and thus
\begin {equation}
 1-\epsilon \leq \liminf_{n\rightarrow \infty} \frac{F(\lambda n)}{F(n)} 
\leq \limsup_{n\rightarrow\infty} \frac{F(\lambda n)}{F(n)} \leq 1+\epsilon,
\end {equation}
Finally, send $\epsilon\rightarrow 0$ to get the desired result.
\end {proof}

The following lemma follows from Lemma~\ref{l:int}
by comparing the sums with integrals
and using the second part of Lemma~\ref{l:powerslow}.

 \begin {lemma} \label{l:sum}
 If $R(n)\sim n^{-\alpha} L(n)$ where $\alpha \in (-\infty,1]$ and 
$L$ is slowly varying at infinity then
 \begin {equation}
  \sum_{i=1}^n R(i) \sim n^{1-\alpha} \hat{L}(n)
 \end {equation}
 where $\hat{L}$ is slowly varying at infinity. 
Furthermore, if $\alpha < 1$ then
 \begin {equation}
 \hat{L}(n) = \frac{1}{1-\alpha} L(n).
 \end {equation}
Similarly, if $R(n)\sim n^{\alpha-2} L(n)$ where 
$\alpha \in (-\infty,1)$ and $L$ is slowly varying at infinity then
 \begin {equation}
 \sum_{i=n}^\infty R(i) \sim \frac{1}{1-\alpha} n^{\alpha-1} L(n).
\end {equation}
\end {lemma}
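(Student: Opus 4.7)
The plan is to reduce both claims to their integral analogs in Lemma~\ref{l:int} by a standard sum-to-integral comparison, the justification of which uses Lemma~\ref{l:powerslow}(ii) to replace $R$ by an asymptotically equivalent, eventually monotone function.

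First I would fix a small $\delta>0$, depending on $\alpha$ but in any case strictly less than $\min\{|{-\alpha}|,|{1-\alpha}|\}$ when these are positive (the borderline cases $\alpha\in\{0,1\}$ can be handled by obvious modifications, since there $R$ itself can be sandwiched between the monotone equivalents $x^{\pm\delta}\overline{L}(x)$ and $x^{\mp\delta}\underline{L}(x)$ up to an arbitrarily small power of $x$). By Lemma~\ref{l:powerslow}(ii) there is a slowly varying $L_*\sim L$ such that, with an appropriate sign convention, $x^{\pm\delta}L_*(x)$ is strictly monotone. Writing
\[
R_*(x):=x^{-\alpha}L_*(x)=x^{-\alpha\mp\delta}\cdot x^{\pm\delta}L_*(x),
\]
with signs chosen so that both factors are positive and monotone in the same direction, one obtains a function $R_*$ that is eventually monotone on $[1,\infty)$ and satisfies $R_*(n)\sim R(n)$.

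For an eventually monotone positive function, the elementary sum-integral inequality gives, for $N$ sufficiently large and $n>N$,
\[
\int_N^{n+1}R_*(x)\,dx\;\leq\;\sum_{i=N}^n R_*(i)\;\leq\; R_*(N)+\int_N^n R_*(x)\,dx
\]
if $R_*$ is eventually decreasing, with the inequalities reversed in the increasing case. Since $\int_1^N R_*(x)\,dx$ is a bounded constant, the partial sum $\sum_{i=1}^n R_*(i)$ is asymptotic to $\int_1^{n+1}R_*(x)\,dx$. Applying Lemma~\ref{l:int} to $R_*$, this integral is asymptotic to $n^{1-\alpha}\hat{L}(n)$ with $\hat{L}$ slowly varying, and equal to $L_*(n)/(1-\alpha)\sim L(n)/(1-\alpha)$ when $\alpha<1$. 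A routine argument using $R(n)/R_*(n)\to 1$ together with $\sum R_*(i)\to\infty$ then transfers the asymptotic from $R_*$ to $R$, proving the first claim.

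The second claim (tail sum under $R(n)\sim n^{\alpha-2}L(n)$ with $\alpha<1$) follows by the same scheme: produce an eventually decreasing $R_*\sim R$ via Lemma~\ref{l:powerslow}(ii) with $\delta<1-\alpha$, compare the tail sum $\sum_{i\geq n}R_*(i)$ to $\int_n^\infty R_*(x)\,dx$ by the monotone bound $R_*(i)\leq \int_{i-1}^{i}R_*(x)\,dx$, and invoke the tail case of Lemma~\ref{l:int}. The main bookkeeping obstacle is matching the direction of the monotone replacement to the sign of $-\alpha$ (and in the borderline cases $\alpha\in\{0,1\}$ combining upper and lower monotone representatives); once the correct eventually monotone equivalent of $R$ is in hand every remaining step is a routine sum-integral comparison.
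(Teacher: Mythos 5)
Your proposal is correct and follows essentially the same route as the paper, which proves this lemma exactly by invoking the second part of Lemma~\ref{l:powerslow} to obtain an eventually monotone equivalent of $R$ and then comparing sums with integrals via Lemma~\ref{l:int}. The extra care you take with the borderline cases and with transferring the asymptotics from $R_*$ back to $R$ (noting that in the convergent case for $\alpha=1$ the claim is trivial) only fills in details the paper leaves implicit.
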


\subsection{Random variables with regularly varying tails}

The following Tauberian theorem is essential in the
study of random variables in the domain of attraction
to a stable distribution.  A proof may be found
in~\cite[Thm.~XIII.5.5]{feller:vol2}
 
 \begin {theorem}
\label{thm:tauberian}
 Let $q_n \geq 0$ and suppose that 
 \begin {equation}
  Q(s) = \sum_{n=0}^\infty q_n s^n
 \end {equation}
converges for $0\leq s < 1$.  If $L$ is slowly varying at 
infinity and $\rho \geq 0$ then the following two relations are equivalent:
\begin {equation} \label{Qdiv}
 Q(s) \sim \frac{1}{(1-s)^\rho} L\left(\frac{1}{1-s}\right) 
\qquad \text{as}~s\rightarrow 1^-
\end {equation}
and
\begin {equation} \label{sumq}
 \sum_{i=0}^n q_i \sim \frac{1}{\Gamma(\rho+1)} n^\rho L(n) 
\qquad \text{as}~n\rightarrow\infty.
\end {equation}
Furthermore, if $q_n$ is monotone and $\rho > 0$ then \eqref{Qdiv} is equivalent to
\begin {equation}
 q_n \sim \frac{1}{\Gamma(\rho)} n^{\rho-1}L(n)\qquad \text{as}~n\rightarrow\infty.
\end {equation}

\end {theorem}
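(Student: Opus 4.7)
The plan is to prove this classical Karamata Tauberian theorem by handling the two directions separately, and then deducing the coefficient asymptotic from the partial-sum asymptotic under monotonicity. The implication \eqref{sumq} $\Rightarrow$ \eqref{Qdiv} is the Abelian direction and is a routine computation; the implication \eqref{Qdiv} $\Rightarrow$ \eqref{sumq} is the Tauberian content and is the hard part.

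For the Abelian direction, the plan is to use partial summation to write $Q(s)=(1-s)\sum_{n\geq 0} S_n s^n$, with $S_n=\sum_{i=0}^n q_i$, and reduce the problem to the benchmark asymptotic
\begin{equation}
\sum_{n\geq 0} n^\rho L(n)\, s^n \sim \Gamma(\rho+1)(1-s)^{-\rho-1} L\bigl((1-s)^{-1}\bigr) \qquad\text{as } s\to 1^-.
\end{equation}
This benchmark follows by comparing the sum with $\int_0^\infty t^\rho L(t) e^{-t(1-s)}\,dt$, substituting $u=t(1-s)$, and using dominated convergence to replace $L(u/(1-s))$ by $L((1-s)^{-1})$ inside the integral: Karamata's uniform convergence theorem provides pointwise convergence of the ratio to~$1$ on compact $u$-sets, while the bound $L(x)=O(x^\epsilon)$ of Lemma~\ref{l:powerslow} supplies an integrable majorant. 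Substituting $S_n\sim n^\rho L(n)/\Gamma(\rho+1)$ into $(1-s)\sum_n S_n s^n$ and applying the benchmark delivers \eqref{Qdiv}.

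For the Tauberian direction, the plan is to reinterpret \eqref{Qdiv} in terms of vague convergence of rescaled measures. Introduce $\mu_x=\sum_n q_n \delta_{n/x}$ on $[0,\infty)$; then \eqref{Qdiv} together with the slow variation of $L$ implies
\begin{equation}
\frac{1}{x^\rho L(x)}\int_0^\infty e^{-\lambda t}\,d\mu_x(t) \to \lambda^{-\rho}, \qquad x\to\infty,
\end{equation}
for each $\lambda>0$. By the continuity theorem for Laplace transforms, the measures $\mu_x/(x^\rho L(x))$ converge vaguely on $(0,\infty)$ to the measure with density $\rho\, t^{\rho-1}/\Gamma(\rho+1)$. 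The main obstacle, and the heart of the argument, is that one wants to evaluate these measures against $\mathbf{1}_{[0,1]}$ in order to recover $S_{\lfloor x\rfloor}$, and this indicator is not continuous. Karamata's classical trick bridges the gap: sandwich $\mathbf{1}_{[0,1]}$ between polynomials in $e^{-t}$ (Weierstrass approximation transported through the homeomorphism $t\mapsto e^{-t}$ from $[0,\infty]$ onto $[0,1]$), integrate each such approximant exactly via the Laplace-transform hypothesis, and let the approximation error tend to zero. This yields $S_{\lfloor x\rfloor}/(x^\rho L(x))\to 1/\Gamma(\rho+1)$, which is \eqref{sumq}.

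For the final assertion, assume $q_n$ is monotone and $\rho>0$. The main equivalence gives $S_n\sim n^\rho L(n)/\Gamma(\rho+1)$; monotonicity then transfers this asymptotic to $q_n$ by a standard sandwich. For $q_n$ non-increasing and $\lambda\in(0,1)$,
\begin{equation}
(n-\lfloor\lambda n\rfloor)\,q_n \;\leq\; S_n - S_{\lfloor\lambda n\rfloor} \;\leq\; (n-\lfloor\lambda n\rfloor)\,q_{\lfloor\lambda n\rfloor+1}.
\end{equation}
Dividing by $n^\rho L(n)/\Gamma(\rho+1)$ and using slow variation of $L$ together with $(1-\lambda^\rho)/(1-\lambda)\to\rho$ as $\lambda\to 1^-$ pinches both $\limsup$ and $\liminf$ of $q_n/(n^{\rho-1}L(n))$ to $1/\Gamma(\rho)$; the non-decreasing case is symmetric. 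The reverse implication in the monotone case is immediate from the summation asymptotic in Lemma~\ref{l:sum} combined with the already-established Abelian direction.
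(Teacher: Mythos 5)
Your outline is correct. The paper does not prove Theorem~\ref{thm:tauberian} at all --- it simply cites \cite[Thm.~XIII.5.5]{feller:vol2} --- and your argument (partial summation plus the benchmark Abelian estimate for \eqref{sumq}$\Rightarrow$\eqref{Qdiv}, the extended continuity theorem for Laplace transforms combined with Karamata's polynomial approximation of the indicator for the Tauberian direction, and the monotone-density sandwich for the final assertion) is essentially the standard proof found in that reference.
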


In what follows we let $\off$ be a random
 variable taking values in the nonnegative integers and belonging to
 the domain of attraction of a stable distribution
with index $\alpha\in(0,2]$.  We let
$L_1$ be a slowly varying function so that
\begin{equation}\label{truncvar2}
\E(\off^2\indic{\off\leq n}) = n^{2-\alpha}L_1(n),
\end{equation}
cf~\eqref{truncvar}.

Using Theorem~\ref{thm:tauberian} we may arrive at the following
asymptotic expressions for the probability generating
functions of random variables in  the domain of
 attraction of a stable law.  
 We leave out the details of the proofs of the
following two results.

\begin {lemma} \label{taubalpha1}
If $\alpha \in (0,1]$ then
\begin {equation}
 1-\E(s^\off) \sim \frac{\Gamma(3-\alpha)}{\alpha}(1-s)^{\alpha}
\hat L\left(\frac{1}{1-s}\right) \qquad 
\text {as} ~s\rightarrow 1^-
\end {equation}
where $\hat L$ varies slowly at infinity. If $\alpha < 1$ then
\begin {equation} \label{slow<1}
 \hat L(x) = \frac{1}{1-\alpha}L_1(x).
\end {equation}
\end{lemma}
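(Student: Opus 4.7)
The plan is to reduce the asymptotics of $1-\E(s^\xi)$ to the classical Tauberian machinery (Theorem~\ref{thm:tauberian}) via the standard identity
\begin{equation*}
1-\E(s^\xi) = (1-s)\sum_{n=0}^\infty \P(\xi>n)\,s^n,
\end{equation*}
obtained by interchanging summations in $\sum_n\pi_n(1-s^n) = (1-s)\sum_n \pi_n(1+s+\dots+s^{n-1})$. Setting $q_n=\P(\xi>n)$ and $Q(s)=\sum_n q_n s^n$, the problem becomes: determine the behaviour of $Q(s)$ as $s\to 1^-$ from the known behaviour of $q_n$.

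For $\alpha\in(0,1)$, I would use the tail asymptotics recorded in~\eqref{stabletail}, namely $q_n \sim \frac{2-\alpha}{\alpha}\,n^{-\alpha}L_1(n)$. Because $q_n$ is monotone decreasing and $\rho:=1-\alpha>0$, the monotone form of Theorem~\ref{thm:tauberian} (with $L(n)=\Gamma(1-\alpha)\frac{2-\alpha}{\alpha}L_1(n)$) applies and yields
\begin{equation*}
Q(s)\sim \frac{(2-\alpha)\,\Gamma(1-\alpha)}{\alpha}\,(1-s)^{\alpha-1}\,L_1\!\left(\tfrac{1}{1-s}\right),\qquad s\to 1^-.
\end{equation*}
Multiplying by $1-s$ and using the functional equation $\Gamma(3-\alpha)=(2-\alpha)(1-\alpha)\Gamma(1-\alpha)$ rearranges this into the claimed form $\frac{\Gamma(3-\alpha)}{\alpha}(1-s)^\alpha \hat L\!\left(\tfrac{1}{1-s}\right)$ with $\hat L(x)=\frac{1}{1-\alpha}L_1(x)$, which is exactly~\eqref{slow<1}.

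For the boundary case $\alpha=1$ the recipe above breaks down because $\Gamma(1-\alpha)$ blows up and $\frac{1}{1-\alpha}L_1$ is not defined. Here $q_n\sim n^{-1}L_1(n)$ and $\rho=0$, so I would instead invoke the partial-sum form of Theorem~\ref{thm:tauberian}: $Q(s)\sim \tilde L\!\left(\tfrac{1}{1-s}\right)$ is equivalent to $\sum_{i=0}^n q_i \sim \tilde L(n)$. By Lemma~\ref{l:sum}, the partial sum $\sum_{i=1}^n i^{-1}L_1(i)$ is of the form $\tilde L(n)$ for some slowly varying $\tilde L$, and therefore
\begin{equation*}
1-\E(s^\xi)=(1-s)Q(s)\sim (1-s)\,\hat L\!\left(\tfrac{1}{1-s}\right)
\end{equation*}
with $\hat L$ slowly varying; since $\Gamma(3-\alpha)/\alpha=\Gamma(2)=1$ at $\alpha=1$, this matches the stated form (without a closed expression for $\hat L$, which is not claimed for $\alpha=1$).

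The main subtlety I expect is the $\alpha=1$ case: there $L_1$ can be anything from bounded away from $0$ to divergent, so one cannot identify $\hat L$ explicitly in terms of $L_1$, and one must content oneself with slow variation supplied abstractly by Lemma~\ref{l:sum}. The $\alpha<1$ case, by contrast, is essentially a bookkeeping exercise once the Tauberian theorem and the Gamma-function identity are in place.
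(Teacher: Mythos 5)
Your proposal is correct and follows essentially the same route as the paper's (very terse) proof: set $q_n=\P(\xi>n)$, use the tail asymptotics~\eqref{stabletail}, and apply Theorem~\ref{thm:tauberian} (the paper routes the $\alpha<1$ case through the partial sums via Lemma~\ref{l:sum} rather than the monotone form, but this is an immaterial difference). Your Gamma-function bookkeeping and your handling of the boundary case $\alpha=1$ both check out.
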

\begin {proof}
 This follows from letting $q_n = \P(\off>n)$ and applying 
\eqref{stabletail}, Lemma~\ref{l:sum}
and Theorem~\ref{thm:tauberian}.
\end {proof}

\begin {lemma} \label{taubalpha2}
If $\E(\off) = 1$ and $\alpha \in (1,2]$ then
\begin {equation} \label{eq:taubalpha2}
 \E(s^\off) - s \sim  (1-s)^{\alpha} L\left(\frac{1}{1-s}\right)
\qquad \text {as} ~s\rightarrow 1^-.
\end {equation}
where
\begin {equation}
  L(x) = \frac{\Gamma(3-\alpha)}{\alpha(\alpha-1)} 
L_1(x)-\frac{1}{2}x^{\alpha-2} 
\end {equation}
varies slowly at infinity. 
(The second term on the right is always 
negligible when $\alpha <2$.)
\end {lemma}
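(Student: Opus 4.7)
The plan is to derive the identity
\[
f(s) - s = (1-s)^2 Q(s), \qquad Q(s) := \sum_{i\geq 0} T_i\, s^i, \qquad T_i := \sum_{j>i}\P(\xi > j) = \E\bigl[(\xi - i - 1)_+\bigr],
\]
and then read off the asymptotic of $f(s)-s$ by applying the Tauberian Theorem~\ref{thm:tauberian} to $Q(s)$. The identity comes from two applications of $1-s^k = (1-s)\sum_{\ell=0}^{k-1}s^\ell$: first to obtain $1-f(s) = (1-s)\sum_{j\geq 0}s^j\P(\xi>j)$; then, invoking the criticality $\E(\xi) = \sum_j\P(\xi>j) = 1$ to rewrite $f(s)-s = (1-s)\sum_{j\geq 1}(1-s^j)\P(\xi>j)$, a second application and swapping the order of summation give the claimed formula. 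Since $T_i$ is non-increasing in $i$, the monotone forms of Theorem~\ref{thm:tauberian} apply.

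For $\alpha\in(1,2)$ the tail estimate~\eqref{stabletail} combined with Lemma~\ref{l:sum} gives $T_i \sim \frac{2-\alpha}{\alpha(\alpha-1)}i^{1-\alpha}L_1(i)$, and using $(2-\alpha)\Gamma(2-\alpha) = \Gamma(3-\alpha)$, Theorem~\ref{thm:tauberian} with $\rho = 2-\alpha>0$ yields $Q(s)\sim \frac{\Gamma(3-\alpha)}{\alpha(\alpha-1)}(1-s)^{\alpha-2}L_1\bigl(\tfrac{1}{1-s}\bigr)$, hence $f(s)-s\sim\frac{\Gamma(3-\alpha)}{\alpha(\alpha-1)}(1-s)^\alpha L_1(1/(1-s))$. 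The subtracted term $-\tfrac12 x^{\alpha-2}$ in $L(x)$ is negligible in this range: Lemma~\ref{l:powerslow} gives $L_1(x) > Cx^{-\epsilon}$ for every $\epsilon>0$, and taking $0<\epsilon<2-\alpha$ forces $x^{\alpha-2} = o(L_1(x))$.

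For $\alpha = 2$ there are two subcases. If $f''(1)<\infty$, summing termwise gives $\sum_{i\geq 0}T_i = \E\bigl[\tfrac12\xi(\xi-1)\bigr] = \tfrac12 f''(1)$, so $Q(s)\to \tfrac12 f''(1)$ by monotone convergence and $f(s)-s\sim\tfrac12 f''(1)(1-s)^2$; by~\eqref{q2} $L_1(x)\to 1+f''(1)$, so $L(x)\to\tfrac12(1+f''(1))-\tfrac12 = \tfrac12 f''(1)$, matching. If instead $L_1(x)\to\infty$, evaluating $T_i = \E[(\xi-i-1)_+]$ explicitly gives
\[
\sum_{i=0}^n T_i = \tfrac12\E\bigl[\xi(\xi-1)\mathbf{1}_{\xi\leq n+1}\bigr] + (n+1)\E\bigl[(\xi - 1 - n/2)\mathbf{1}_{\xi>n+1}\bigr];
\]
the first summand equals $\tfrac12 L_1(n+1) - \tfrac12\E(\xi\mathbf{1}_{\xi\leq n+1}) \sim \tfrac12 L_1(n)$, while the second is $O\bigl(n\E(\xi\mathbf{1}_{\xi>n})\bigr)$. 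The main obstacle is showing that the second summand is in fact $o(L_1(n))$: this is the classical statement that $\xi$ lies in the domain of attraction of a normal law, provable from~\eqref{truncvar} and the companion bound $n^2\P(\xi>n) = o(L_1(n))$ via a dyadic decomposition of $\E(\xi\mathbf{1}_{\xi>n})$ using $\E(\xi\mathbf{1}_{2^jn<\xi\leq 2^{j+1}n}) \leq (2^jn)^{-1}\bigl[L_1(2^{j+1}n) - L_1(2^jn)\bigr]$. Granted this, $\sum_{i\leq n}T_i \sim \tfrac12 L_1(n)$ and the $\rho=0$ case of Theorem~\ref{thm:tauberian} gives $Q(s)\sim\tfrac12 L_1(1/(1-s))$, hence $f(s)-s\sim\tfrac12(1-s)^2 L_1(1/(1-s))$; as $L_1(x)\to\infty$ the $-\tfrac12$ correction in $L(x)$ is again negligible.
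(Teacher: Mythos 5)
Your proof is correct, but it takes a genuinely different route from the paper. The paper works at the level of generating functions from the "top down": it sets $Q(s)=\E(\off^2 s^\off)=s\frac{d}{ds}\bigl(s\frac{d}{ds}f(s)\bigr)$, notes that the partial sums of its coefficients are exactly $\E(\off^2\indic{\off\leq n})=n^{2-\alpha}L_1(n)$, applies the Tauberian theorem to get $Q(s)\sim\Gamma(3-\alpha)(1-s)^{\alpha-2}L_1((1-s)^{-1})$, and then integrates twice (using $\E(\off)=1$ and Lemma~\ref{l:int}); the $-\tfrac12x^{\alpha-2}$ term in $L$ falls out of the second-order expansion of the $\log s$ produced by the integration, so all of $\alpha\in(1,2]$, including both $\alpha=2$ subcases, is handled in one uniform computation. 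You instead work from the "bottom up" via the discrete summation-by-parts identity $f(s)-s=(1-s)^2\sum_i T_i s^i$ with $T_i=\E[(\off-i-1)_+]$, which is the exact sequence-level dual of the paper's differentiate-then-integrate scheme, and then apply the Tauberian theorem to the monotone sequence $(T_i)$. Your identity and the $\alpha\in(1,2)$ computation are correct, and the constants match (including the observation that $x^{\alpha-2}=o(L_1(x))$ so the second term of $L$ is asymptotically irrelevant there). The price of your route is that it leans on the tail asymptotics~\eqref{stabletail}, which hold only for $\alpha<2$, forcing a separate treatment of $\alpha=2$; there the infinite-variance subcase needs the classical input $n\E(\off\indic{\off>n})=o(L_1(n))$, which you correctly identify as the one nontrivial step and sketch via a dyadic decomposition. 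That step does go through (one can also get it from the Feller bound $\P(\off>n)\leq\tfrac{\eps}{2}n^{-2}\E(\off^2\indic{\off\leq n})$ together with Lemma~\ref{l:sum}, exactly as in the paper's proof of Lemma~\ref{l:bias}), so there is no gap — but it is an input the paper's integration argument obtains for free. What your approach buys in exchange is elementarity: no differentiation of power series, no double integration of regularly varying functions, and a transparent probabilistic meaning for the coefficients $T_i$.
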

\begin {proof}
 Let $q_n = n^2 \P(\off = n)$.
 Then by \eqref{truncvar2},
 \begin {equation} \label{sumq2}
 \sum_{i=0}^n q_i = \E(\xi^2 \indic{\xi \leq n})=  n^{2-\alpha}L(n).
\end {equation}
Define 
\begin {equation}
Q(s) := \sum_{n=0}^\infty q_n s^n = \E(\xi^2 s^\xi) = 
s\frac{d}{ds}\left(s\frac{d}{ds} \E(s^\xi)\right).
\end{equation}
Then
\begin {align}
 \E(s^\xi)  &= 1-\int_s^1 y^{-1}\left(1-\int_{y}^1 x^{-1} Q(x) dx\right) dy 
\nonumber \\
 &= 1+\log(s)+\int_s^1 y^{-1}\left(\int_{y}^1 x^{-1} Q(x) dx\right) dy, \label{esx}
\end {align}
where we used that $\E(\xi)=1$.
It follows from \eqref{sumq2} and Thm.~\ref{thm:tauberian} that
\begin {equation}
 Q(s) \sim \Gamma(3-\alpha)(1-s)^{\alpha-2} L\left(\frac{1}{1-s}\right) \qquad \text{as}~s\rightarrow 1^-.
\end {equation}
Thus, applying Lemma~\ref{l:int} twice to \eqref{esx} and 
expanding $\log(s)$
to second order around $s=1$ yields the desired result.
\end {proof}

The following results are more specific to the situation
considered here so we include the details.  
The first result concerns the
size-biased distribution of $\off$.

\begin {lemma} \label{l:bias}
Suppose $\E(\off)=1$ and $\alpha \in (1,2]$. 
Then $\offhat$ defined by
  $\P(\offhat=i) = i\P(\off=i)$
satisfies for any $c>0$
\begin {equation}
 \E(1-(1-c n^{-1})^{\offhat}) \leq Cn^{1-\alpha}L_1(n)
\end {equation}
with $C>0$ a constant.
\end {lemma}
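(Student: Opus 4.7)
The plan is to expand the expectation as a weighted sum over the distribution of $\offhat$ and split at a threshold of order $n$. Writing $t = c/n$, one has
$$\E(1-(1-t)^{\offhat}) = \sum_{i \geq 1} i \pi_i \bigl(1-(1-t)^i\bigr),$$
and I would split this sum at $N = \lceil n/c \rceil$. The heuristic is that for $i \lesssim N$ the linear approximation $1-(1-t)^i \approx it$ is accurate so one can invoke the truncated second moment \eqref{truncvar2}, while for $i > N$ the factor $1-(1-t)^i$ saturates at $1$ and one must estimate the tail of the size-biased distribution directly.

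For the lower piece I would use the elementary bound $1 - (1-t)^i \leq it$ (valid since $t\in[0,1]$) to get
$$\sum_{i \leq N} i \pi_i (1-(1-t)^i) \;\leq\; t \sum_{i \leq N} i^2 \pi_i \;=\; t\, N^{2-\alpha} L_1(N)$$
by \eqref{truncvar2}. Substituting $t = c/n$, $N \asymp n$ and using slow variation of $L_1$ (Lemma~\ref{l:powerratio}) then gives a bound of order $n^{1-\alpha} L_1(n)$.

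For the upper piece $i > N$ I would use $1-(1-t)^i \leq 1$, reducing matters to estimating $\sum_{i>N} i \pi_i$. My plan is a dyadic decomposition over intervals $(2^k N, 2^{k+1} N]$: on each such interval $i \leq i^2/(2^k N)$, so by \eqref{truncvar2} the contribution is at most
$$\frac{1}{2^k N}\, \E(\off^2 \indic{\off \leq 2^{k+1} N}) \;\leq\; 2^{2-\alpha}\, 2^{k(1-\alpha)}\, N^{1-\alpha}\, L_1(2^{k+1} N).$$
Since $\alpha > 1$, choosing any $\eps \in (0, \alpha-1)$ and applying Lemma~\ref{l:powerslow} to absorb the slow variation of $L_1$ into a factor $2^{k\eps}$ makes $\sum_{k \geq 0} 2^{k(1-\alpha+\eps)}$ a convergent geometric series, yielding $\sum_{i>N} i \pi_i \leq C\, N^{1-\alpha} L_1(N)$.

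Summing the two contributions and again using slow variation to pass from $L_1(N)$ to $L_1(n)$ at the cost of a multiplicative constant (depending on $c$) completes the proof. The only slightly delicate point is the endpoint $\alpha = 2$: there the tail asymptotic $\P(\off > x) \sim \tfrac{2-\alpha}{\alpha} x^{-\alpha} L_1(x)$ from \eqref{stabletail} is not available, which is the main reason I plan the dyadic argument rather than a direct tail integration. Since that argument rests only on the truncated second moment identity \eqref{truncvar2} together with slow variation, it handles $\alpha \in (1,2)$ and $\alpha = 2$ uniformly.
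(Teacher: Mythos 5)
Your proposal is correct, and its overall skeleton --- split the sum at a scale of order $n$, linearize via $1-(1-t)^i\leq it$ below that scale, and bound by the size-biased tail above it --- coincides with the paper's proof, which decomposes on the events $\{\offhat\leq n\}$ and $\{\offhat>n\}$. Where you genuinely diverge is in the estimation of the tail $\sum_{i>N}i\pi_i=\P(\offhat>N)$. The paper proceeds by Abel summation, writing $\P(\offhat>n)=n\P(\off>n)+\sum_{j\geq n}\P(\off>j)$, and then invokes the one-sided Feller inequality $\P(\off>n)\leq\frac{2-\alpha+\epsilon}{\alpha}n^{-2}\E(\off^2\indic{\off\leq n})$ (which, unlike the asymptotic \eqref{stabletail}, remains valid and useful at $\alpha=2$) together with Lemma~\ref{l:sum} to sum the resulting regularly varying tail. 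Your dyadic decomposition over blocks $(2^kN,2^{k+1}N]$, using only $i\leq i^2/(2^kN)$, the truncated second-moment identity \eqref{truncvar2}, and Lemma~\ref{l:powerratio} to absorb $L_1(2^{k+1}N)/L_1(N)$ into a factor $2^{k\epsilon}$ with $\epsilon<\alpha-1$, reaches the same bound $\P(\offhat>N)\leq C\,N^{1-\alpha}L_1(N)$ in a more self-contained way: it avoids the external citation and, as you correctly note, treats $\alpha\in(1,2)$ and $\alpha=2$ uniformly, which is exactly the endpoint the paper also has to sidestep. The trade-off is purely cosmetic (slightly worse constants and a sum over dyadic scales in place of a clean identity); both arguments ultimately rest on the same input \eqref{truncvar2} and the same slow-variation lemmas. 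The remaining steps of your proof --- the bound $t\,N^{2-\alpha}L_1(N)$ for the lower piece and the passage from $L_1(N)$ to $L_1(n)$ at the cost of a constant depending on $c$ --- match the paper's.
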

\begin {proof}
We may write
\begin{equation}\begin{split}
 \E(1-(1-c n^{-1})^{\offhat}) &= \E((1-(1-c n^{-1})^{\offhat})\indic{\offhat> n})  \\
 &\quad+ \E((1-(1-c n^{-1})^{\offhat})\indic{\offhat\leq n}) \\
 &\leq \P(\offhat>n) + cn^{-1}\E(\offhat\indic{\offhat\leq n})\\
 &= \P(\offhat>n) + cn^{-1}\E(\off^2\indic{\off\leq n})
\end{split}\end{equation}
Let us now  consider the first term in the last expression. We find that
\begin{equation}\begin{split}
 \P(\offhat>n) &= \sum_{i=n+1}^\infty i \P(\off=i) = n\P(\off>n) + \sum_{i=n}^\infty (i-n)\P(\off=i) \\
 &= n\P(\off>n) + \sum_{i=n}^\infty \sum_{j=n}^{i-1} \P(\off=i) \\
 &= n\P(\off>n) + \sum_{j=n}^\infty \sum_{i=j+1}^\infty \P(\off=i) \\
 &= n\P(\off>n) + \sum_{j=n}^\infty \P(\off>j).
\end{split}\end{equation}
By~\cite[XVII.5, Eq.~(5.16)]{feller:vol2} we have that 
for any $\epsilon >0$ there is an $n_0 > 0$ such that 
\begin {equation} 
\P(\off>n)\leq \frac{2-\alpha+\epsilon}{\alpha} n^{-2} \E(\off^2\indic{\off\leq n})
\end {equation}
for all $n\geq n_0$. By Lemma \ref{l:sum} it holds that
\begin{equation}
 \sum_{j=n}^\infty j^{-2} \E(\off^2\indic{\off\leq j}) =  
\sum_{j=n}^\infty   j^{-\alpha} L_1(j) 
 \sim \frac{1}{\alpha-1} n^{1-\alpha} L_1(n).
\end{equation}
It now follows that there is a constant $c_1>0$ such that
\begin {equation}
 \P(\offhat > n) \leq c_1 n^{1-\alpha} L_1(n)
\end {equation}
for all $n>0$, which completes the proof.
\end {proof}

\begin {lemma} \label{l:lowerboundY}
 Let $\offhat$ be defined as in Lemma \ref{l:bias} and define
\begin {equation}
 Y=\min\{U(\offhat),\offhat-U(\offhat)+1\}
\end {equation}
where given $\offhat$, $U(\offhat)$ is chosen uniformly from $\{1,\ldots,\offhat\}$. Then 
 \begin {equation}
  \E(1-e^{-Y/n}) \geq  \frac{1}{8n} \E(\off^2\indic{\off\leq n}) =  \frac{1}{8} n^{1-\alpha} L_1(n). 
 \end {equation}
\end {lemma}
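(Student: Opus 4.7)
The plan is to lower bound $\E(1-e^{-Y/n})$ by restricting to the event $\{\offhat\leq n\}$ on which $Y/n$ is small enough that the elementary inequality $1-e^{-x}\geq x/2$ (valid for $x\in[0,1]$) applies, and then to exploit the size-bias identity $k\,\P(\offhat=k)=k^2\P(\off=k)$ to recognize $\E(\off^2\indic{\off\leq n})$ in what remains.

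First I would observe that since $Y=\min\{U(\offhat),\offhat-U(\offhat)+1\}$, conditionally on $\offhat=k$ one has the deterministic bound $Y\leq(k+1)/2$. Consequently, on the event $\{\offhat\leq n\}$ and for $n\geq 1$ we have $Y/n\leq(n+1)/(2n)\leq 1$, so
\begin{equation*}
\E(1-e^{-Y/n})\geq \E\bigl((1-e^{-Y/n})\indic{\offhat\leq n}\bigr)\geq \frac{1}{2n}\E\bigl(Y\indic{\offhat\leq n}\bigr).
\end{equation*}

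Next I would evaluate $\E(Y\mid\offhat=k)$ directly. Since $U(\offhat)$ is uniform on $\{1,\dotsc,k\}$, the value $Y=\min\{j,k-j+1\}$ takes each integer in $\{1,\dotsc,\lfloor k/2\rfloor\}$ exactly twice among $j=1,\dotsc,k$, and also the value $(k+1)/2$ once when $k$ is odd. A short case analysis on the parity of $k$ then gives $\E(Y\mid\offhat=k)\geq k/4$ for every $k\geq 1$. Combining this with the size-bias identity,
\begin{equation*}
\E\bigl(Y\indic{\offhat\leq n}\bigr)\geq \frac{1}{4}\sum_{k=1}^n k\,\P(\offhat=k)=\frac{1}{4}\sum_{k=1}^n k^2\P(\off=k)=\frac{1}{4}\E\bigl(\off^2\indic{\off\leq n}\bigr),
\end{equation*}
and substituting back yields the claimed bound $\frac{1}{8n}\E(\off^2\indic{\off\leq n})$. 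The final equality with $\frac{1}{8}n^{1-\alpha}L_1(n)$ is then immediate from~\eqref{truncvar2}.

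There is no serious obstacle here; the argument is essentially bookkeeping. The only step requiring any care is the verification that $\E(Y\mid\offhat=k)\geq k/4$ uniformly in $k\geq 1$, which one checks separately for even and odd $k$. The restriction to $\{\offhat\leq n\}$ is what makes the linearization $1-e^{-x}\geq x/2$ legitimate, and conveniently it is exactly this truncation that produces $\E(\off^2\indic{\off\leq n})$ in place of the (potentially infinite) full second moment.
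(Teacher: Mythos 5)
Your argument is correct and follows essentially the same route as the paper's: linearize via $1-e^{-x}\geq x/2$ after a truncation, compute $\E(Y\mid\offhat=k)\geq k/4$ by counting, and finish with the size-bias identity. The only cosmetic difference is that you truncate on $\{\offhat\leq n\}$ from the outset, whereas the paper truncates on the larger event $\{Y\leq n\}$ and then restricts to $\{\offhat\leq n\}$ inside the conditional expectation; both lead to the same bound.
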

\begin {proof}
 For $x\in[0,1]$ it holds that 
 \begin {equation}
  1-e^{-x} \geq \frac{x}{2}
 \end {equation}
and thus
 \begin{equation}\begin{split}
  \E(1-e^{-Y/n}) &\geq \E((1-e^{-Y/n})\indic{Y\leq n}) 
  \geq \frac{1}{2n}\E(Y\indic{Y\leq n}). 
 \end{split}\end{equation}
 We may then write
\begin{equation}\begin{split}
 \E(Y \indic{Y\leq n}) &= \E(\E(Y \indic{Y\leq n}~|~\offhat)) \\
 &= \E\left(\frac{1}{\offhat} \sum_{i=1}^{\offhat}(i\wedge(\offhat-i+1))\indic{i\wedge(\offhat-i+1)\leq n}\right) \\
 &\geq \E\left(\frac{1}{\offhat}\indic{\offhat\leq n} \sum_{i=1}^{\offhat}(i\wedge(\offhat-i+1))\right) \\
 &\geq \frac{1}{4}\E(\offhat\indic{\offhat\leq n}) = \frac{1}{4}\E(\off^2\indic{\off\leq n}).
\end{split}\end{equation}
\end {proof}

Finally we prove the following:
\begin {lemma} \label{l:sizeGW}
Let $\tau$ be a Galton--Watson tree with critical offspring 
distribution $\off$ satisfying \eqref{eq:taubalpha2}
and let $|\tau|$ denote the total number of individuals in $\tau$. Then 
\begin {equation} \label{eq:sizeofGW}
  \E(s^{|\tau|}) = 1-(1-s)^{1/\alpha}L^\ast((1-s)^{-1}) \qquad s\in[0,1).
 \end {equation}
where $L^\ast$ satisfies
\begin {equation} \label{limcond}
\lim_{y\rightarrow\infty} L^\ast(y)^\alpha L(y^{\frac{1}{\alpha}}L^\ast(y)^{-1}) = 1
\end {equation}
which further implies that it is slowly varying at infinity.
 \end {lemma}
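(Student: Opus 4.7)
My approach is based on the standard functional equation for the total-progeny generating function of a Galton--Watson tree. Writing $f(s) = \E(s^\off)$ and $F(s) = \E(s^{|\tau|})$, the decomposition $|\tau| = 1 + \sum_{i=1}^{\off} |\tau_i|$ (with the $\tau_i$ independent copies of $\tau$) yields $F(s) = s f(F(s))$. Criticality of $\off$ forces $F(s) \to 1$ as $s \to 1^-$. Rearranging gives
$$\frac{1-s}{s} F(s) = f(F(s)) - F(s),$$
and with $g(s) := 1 - F(s) \to 0^+$, Lemma \ref{taubalpha2} applied to the right-hand side yields the key asymptotic
$$1-s \sim g(s)^\alpha L(g(s)^{-1}) \quad \text{as } s \to 1^-.$$

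Next, I would take the equality $\E(s^{|\tau|}) = 1 - (1-s)^{1/\alpha} L^*((1-s)^{-1})$ as the definition of $L^*$, i.e., $L^*(y) := y^{1/\alpha} g(1-1/y)$ for $y>1$. Substituting $g(s) = (1-s)^{1/\alpha} L^*((1-s)^{-1})$ into the asymptotic just displayed and setting $y = (1-s)^{-1}$ gives
$$L^*(y)^\alpha \, L\bigl(y^{1/\alpha} L^*(y)^{-1}\bigr) \longrightarrow 1 \quad \text{as } y \to \infty,$$
which is exactly \eqref{limcond}.

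The final step is to deduce that $L^*$ is slowly varying. My plan here is to apply the inversion theorem for regularly varying functions to $\phi(u) := u^\alpha L(1/u)$, which is regularly varying at $0$ of positive index $\alpha$ (since $L$ is slowly varying at infinity and $\alpha > 1$). By standard results (see, e.g., \cite[\S 1.5]{seneta}) $\phi$ admits an asymptotic inverse $\phi^{-1}$ which is regularly varying at $0$ of index $1/\alpha$, and from $1-s \sim \phi(g(s))$ one obtains $g(s) \sim \phi^{-1}(1-s) = (1-s)^{1/\alpha} M((1-s)^{-1})$ for some slowly varying $M$. Comparing with the defining equation for $L^*$ shows $L^*(y) \sim M(y)$; since asymptotic equivalence to a slowly varying function preserves slow variation, $L^*$ is itself slowly varying. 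The main technical point is that the inversion step must be valid for an asymptotic equivalence rather than an exact identity — this is a standard refinement of the inversion theorem in the regular-variation literature, but it is the step that requires the most care, in particular to handle that $\phi$ is defined only for small $u$ and that we have no a priori control on the regularity of $g$ beyond monotonicity.
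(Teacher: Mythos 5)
Your proposal is correct and follows essentially the same route as the paper: the functional equation $\E(s^{|\tau|})=sf(\E(s^{|\tau|}))$, the definition of $L^\ast$ via \eqref{eq:sizeofGW}, and substitution to obtain \eqref{limcond}. For the final step the paper invokes the de Bruijn conjugate (setting $L_2=(L^\ast)^{-\alpha}$ and citing \cite[Theorem~1.5]{seneta} to get slow variation of $L_2$ from $L_2(y)L_1(yL_2(y))\to1$), which is the same standard fact as the asymptotic inversion of regularly varying functions that you use, so the technical point you flag is handled by exactly that citation.
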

\begin{proof}
We define $L^\ast$ by \eqref{eq:sizeofGW} and show that it is  
slowly varying. 
It is straightforward to show that
\begin {equation} \label{treeformula}
 \E(s^{|\tau|}) = s f(\E(s^{|\tau|})),
\end {equation}
where $f(s) = \E(s^\off).$
Using \eqref{eq:taubalpha2} and \eqref{treeformula} gives
\begin {align}
L^\ast(y)^\alpha L(y^{\frac{1}{\alpha}}L^\ast(y)^{-1}) 
= (1-y^{-1})^{-1}(1-y^{-\frac{1}{\alpha}}L^\ast(y)).
\end{align}
Note that when $s=1$ the right hand side of \eqref{eq:sizeofGW} 
should be understood as the limit as $s\rightarrow 1^-$ 
which equals $\E(1) = 1$. Thus, $L^\ast$ satisfies \eqref{limcond}.
Let
\begin {equation}
 L_1(y) = {L}(y^{\frac{1}{\alpha}})^{-1}
\end {equation}
which is clearly slowly varying at infinity and let
\begin {equation}
 L_2(y) = L^\ast(y)^{-\alpha}.
\end {equation}
Then \eqref{limcond} is equivalent to
\begin {equation}
 \lim_{y\rightarrow \infty} L_2(y) L_1(yL_2(y))=1.
\end {equation}
Therefore, $L_2$ is slowly varying 
by~\cite[Theorem 1.5]{seneta}
 and thus 
$L^\ast = L_2^{-1/\alpha}$ is also slowly varying.
\end{proof}

\begin {thebibliography}{99}

\bibitem{aldous:1986} D. Aldous and J. Pitman, {\it Tree-valued Markov
  chains derived from Galton-–Watson processes.}
  Ann. Inst. H. Poincare Probab. Statist. \textbf{34} (1998), no. 5,
  637–686.
 
\bibitem{alexander:1982} S.~Alexander and R.~Orbach, {\it Density of
  states on fractals: ``fractons'',} J.~Physique
  (Paris)~Lett. \textbf{43} (1982), 625-631.

\bibitem{durhuus:book} J. Ambjørn, B. Durhuus, and T. Jonsson, \emph{Quantum geometry. A statistical field theory approach.} No. 1 in Cambridge Monogr. Math. Phys.,. Cambridge University Press, Cambridge, UK, 1997.

\bibitem{athreya-ney} K.~B.~Athreya and P.~E.~Ney, {\it Branching
  processes,} Dover 2000.

\bibitem{barlow:2006} M.~T.~Barlow and T.~Kumagai, {\it Random walk on
  the incipient infinite cluster on trees.} Illinois
  J. Math.~\textbf{50}, Number 1-4 (2006), 33-65.

\bibitem{bjornberg:2013} J.~E.~Björnberg and S.~Ö.~Stefánsson, 
{\it  Recurrence of bipartite planar maps.} 
Electron. J. Probab. 19(31): 1-40, 2014.

\bibitem{croydon:2008} D. Croydon and T. Kumagai, {\it Random walks on
  Galton--Watson trees with infinite variance offspring distribution
  conditioned to survive.} Electron.~J.~Probab. \textbf{13} (August,
  2008) 1419–1441.

\bibitem{curien:2013a} N.~Curien and I.~Kortchemski, {\it Random
  stable looptrees.} 
Electron. J. Probab 19.108 (2014): 1-35.

\bibitem{curien:2013b} N.~Curien and I.~Kortchemski, {\it Percolation
  on random triangulations and stable looptrees}.  
Probability Theory and Related Fields (2013): 1-35.

\bibitem{durhuus:2006} B. Durhuus, T. Jonsson, and J. F. Wheater, {\it
  Random walks on combs.} J.Phys.A \textbf{A39} (2006) 1009–1038.

\bibitem{durhuus:2007} B. Durhuus, T. Jonsson, and J. F. Wheater, {\it
  The spectral dimension of generic trees.}
  J. Stat. Phys.~\textbf{128} (2007), no. 5, 1237–1260.

\bibitem{feller:vol2} W.~Feller, {\it An introduction to probability
  theory and its applications, vol. 2.} Wiley, 1966.

\bibitem{fuji:2008} I.~Fuji and T.~Kumagai, {\it Heat kernel
  estimation on the incipient infinite cluster for critical branching
  processes.} Proc. of the RIMS workshop on Stochastic Analysis and
  Applications(2008), 85-95.

\bibitem{janson:2011} S.~Janson, T.~Jonsson and S.~{\"O}.~Stefansson, {\it
  Random trees with superexponential branching weights.} J.~Phys.~A:
  Math.~Theor. \textbf{44} (2011), 485002.

\bibitem{janson:2012sgt} S.~Janson, {\it Simply generated trees,
  conditioned Galton--Watson trees, random allocations and
  condensation.} Probability Surveys {\textbf{9}} (2012), 103--252.

\bibitem{jonsson:2008} T. Jonsson and S.~{\"O}.~Stef\'ansson, {\it The
  spectral dimension of random brushes.} J. Phys. A, \textbf{41}
  (2008), no. 4, 045005.

\bibitem{jonsson:2011} T.~Jonsson and S.~\"O.~Stef\'ansson, {\it
  Condensation in nongeneric trees.} {Journal of Statistical Physics},
  \textbf{142} (2011), no. 2, 277--313.

\bibitem{kennedy:1975} D. P. Kennedy, {\it The Galton--Watson process
  conditioned on the total progeny.} J. Appl. Probab. \textbf{12}
  (1975), 800–806.

\bibitem{kesten:1986} H.~Kesten, {\it Subdiffusive behaviour of random
  walk on a random cluster.} Ann.~Inst.~H.~Poincaré
  Probab.~Statist. \textbf{22} (1986) no.~4, 425-487.

\bibitem{kumagai:2008} T.~Kumagai and J.~Misumi, {\it Heat kernel
  estimates for strongly recurrent random walk on random media.}
  J.~Theor.~Probab.~\textbf{21} (2008), 910-935.

\bibitem{lyons-peres} R.~Lyons and Y.~Peres, {\it Probability on trees
  and networks}.  CUP, 2005.

\bibitem{seneta} E.~Seneta. {\it Regularly varying functions}.
  Berlin: Springer-Verlag, 1976.

\bibitem{slack:1968} R.~Slack, {\it A branching process with mean one
  and possibly infinite variance.} Z.Wahrscheinlichkeitstheorie
  verw. Geb. \textbf{9} (1968), 139-145.

\bibitem{stefansson:2012} S. {\"O}. Stefánsson and S. Zohren, {\it
  Spectral Dimension of Trees with a Unique Infinite
  Spine}. J. Stat. Phys. \textbf{147} (2012) 942–962.

\end {thebibliography}

\end{document}